\newcommand*{\rom}[1]{\expandafter\@slowromancap\romannumeral #1@}
\theoremstyle{definition}
\newtheorem{fact}{fact}
\newtheorem{thm}[fact]{Theorem}
\newtheorem{lemma}[fact]{Lemma}
\newtheorem{prop}[fact]{Proposition}
\newtheorem{corollary}[fact]{Corollary}
\newtheorem{defini}[fact]{Definition}
\newtheorem{question}[fact]{Question}
\newtheorem{remark}[fact]{Remark}
\title{Taming Koepke's Zoo II: Register Machines}
\author{Merlin Carl}
\affil{Institut f\"ur mathematische, naturwissenschaftliche und technische Bildung, Abteilung f\"ur Matheamtik und ihre Didaktik, Europa-Universit\"at Flensburg}
\begin{document}

\maketitle

\begin{abstract}
We study the computational strength of resetting $\alpha$-register machines, a model of transfinite computability introduced by P. Koepke in \cite{K1}. Specifically, we prove the following strengthening of a result from \cite{C}: For an exponentially closed ordinal $\alpha$, we have $L_{\alpha}\models$ZF$^{-}$ if and only if COMP$^{\text{ITRM}}_{\alpha}=L_{\alpha+1}\cap\mathfrak{P}(\alpha)$, i.e. if and only if the set of $\alpha$-ITRM-computable subsets of $\alpha$ coincides with the set of subsets of $\alpha$ in $L_{\alpha+1}$. Moreover, we show that, if $\alpha$ is exponentially closed and $L_{\alpha}\not\models$ZF$^{-}$, then COMP$^{\text{ITRM}}_{\alpha}=L_{\beta(\alpha)}\cap\mathfrak{P}(\alpha)$, where $\beta(\alpha)$ is the supremum of the $\alpha$-ITRM-clockable ordinals, which coincides with the supremum of the $\alpha$-ITRM-computable ordinals. We also determine the set of subsets of $\alpha$ computable by an $\alpha$-ITRM with time bounded below $\delta$ when $\delta>\alpha$ is an exponentially closed ordinal smaller than the supremum of the $\alpha$-ITRM-clockable ordinals.
\end{abstract}

\section{Introduction}

In \cite{KM}, Koepke and Miller introduced Infinite Time Register Machines (ITRMs) as a generalization of register machines to ordinal time, thus complementing the Infinite Time Turing Machines introduced in Hamkins and Lewis \cite{HL}. An ITRM has finitely many registers, each of which can store a single natural number. Later on, Koepke also introduced Ordinal Turing Machines (ORMs) (see e.g. \cite{ORM}), in which every register can contain an arbitrary ordinal. In \cite{K1}, he further mentions the possibility of defining register machines in which the register contents are bounded by an ordinal $\alpha$ while the computation time is bounded by a possibly different ordinal $\beta$, so-called $(\alpha,\beta)$-ITRMs. The computational objects for ITRMs were determined in \cite{K1} to be those subsets of $\omega$ contained in $L_{\omega_{\omega}^{\text{CK}}}$, and for ORMs, it was shown in \cite{ORM} that they can compute exactly the constructible sets of ordinals. Moreover, by arguments analogous to those given in Koepke and Seyfferth \cite{KS} for tape models, one can see that for exponentially closed $\alpha$, an $(\alpha,\alpha)$-ITRM computes exactly those subsets of $\alpha$ that are $\alpha$-recursive, i.e. $\Delta_{1}$ over $L_{\alpha}$. 

We recall the definitions of $\alpha$-(w)ITRMs from \cite{KM} and \cite{K}  very briefly. In the original definition of Koepke, programs for $\alpha$-(w)ITRMs are simply register machine programs as e.g. described in Cutland \cite{Cu}. An $\alpha$-(w)ITRM has finitely many registers, each of which can store a single ordinal $<\alpha$. Whe $P$ is a program that uses the register with indices $1,2,...,n$, then a $P$-computation is a sequence of $P$-configuration, i.e., elements $(l,c_{1},...,c_{n})$ of $\omega\times\alpha^{n}$, where $l$ denotes the active program line ad $c_{i}$ is the content of the $i$-th register, for $i\in\{1,2,...,n\}$. These machines operate along an ordinal time axis. At successor stages, the register machine commands are carried out as usual. At limit stages, the active program line and the register contents are obtained as the inferior limits of the sequences of earlier program lines or the earlier contents of the register in question, respectively. If that limit is $\alpha$, an $\alpha$-wITRM-computation is undefined (it ``crashes''), while in an $\alpha$-ITRM-computation, the content of such a register is simply reset to $0$. Of course, we can specify an initial configuration $c$ from which the machine will start; we write $P(c)$ for the computation of the program $P$ starting in the initial configuration $c$. When we work with a single ordinal $\iota$ as an input, we write $P(\iota)$ and understand that $\iota$ is initially written to the first register, while all other registers contain $0$. When we do not specify an input configuration, it is meant that we start in the situation where all registers contain $0$. When we talk about ``outputs'' of a computation, we refer to the configuration in the halting state; often (when, e.g., talking about the computability of functions from the ordinals to the ordinals), it is only the content $\rho$ of the first register we care about, in which case we say that $P(c)$ halts with output $\rho$. For this paper, we modify the definition of a program slightly for the sake of a smoother development: Instead of only allowing jump conditions of the form $R_{i}=R_{j}$, i.e. stating that the content of one register is equal to the content of another register, we allow arbitrary Boolean combinations of such statements as jumping conditions. This has the convenient effect that one can store a certain configuration of a program $P$ (i.e. the register contents and active program line) in some extra registers and then recognize in a single step whether or not $P$ is currently in this configuration. Clearly, this leaves the computational power of these machines untouched; one nice effect of this modification is that the proof of the speedup-theorem for $\alpha$-(w)ITRMs (see Lemma \ref{no gaps} below) avoids a lot of the trickery used in  \cite{CFKMNW} in the case of ($\omega$-)ITRMs.

In this work, we investigate the computational strength of $\alpha$-ITRMs in the case that $\alpha$ has sufficient closure properties. The lesson here is that, if $\alpha$ has strong closure properties, then the computational strength of $\alpha$-(w)ITRMs goes little beyond $L_{\alpha}$. 
The results on $\alpha$-ITRMs in this paper are refinement of results that appeared in \cite{C}; there, it was proved that, if $\alpha>\omega$ is regular in $L_{\alpha^{\omega}}$, then COMP$^{\text{ITRM}}_{\alpha}=L_{\alpha+1}\cap\mathfrak{P}(\alpha)$. Here, we exploit the proof to obtain the same result under a considerably weaker condition, thus obtaining an equivalence.

Although we will mostly be concerned with $\alpha$-ITRMs below, we point out the following result on the unresetting case, which is contained in \cite{C} and will be improved below (Theorem \ref{pi3 reflecting}):


\begin{thm}{\label{sigma2 admissible wITRM strength}}
	If $\alpha$ is $\Sigma_{2}$-admissible, then COMP$^{\text{wITRM}}_{\alpha}=\Delta_{1}(L_{\alpha})\cap\mathfrak{P}(\alpha)$.
\end{thm}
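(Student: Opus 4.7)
The plan is to prove both inclusions separately. The direction $\Delta_{1}(L_{\alpha})\cap\mathfrak{P}(\alpha)\subseteq\text{COMP}^{\text{wITRM}}_{\alpha}$ is the easier one: adapting the tape-model simulation of Koepke--Seyfferth \cite{KS} to the register setting, an $\alpha$-wITRM will recursively traverse the levels of the constructible hierarchy up to $L_{\alpha}$ by carrying out $\Sigma_{1}$-recursion, using its registers to track ordinal parameters. Since $\alpha$ is in particular admissible, any $\Delta_{1}(L_{\alpha})$-subset of $\alpha$ is thereby wITRM-decidable; the details are routine given the framework already in place.

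For the reverse inclusion, the crux is a bounding lemma: for every $\alpha$-wITRM program $P$ and every input $\iota<\alpha$, if $P(\iota)$ halts, it halts at some stage $<\alpha$. Given this bound, the halting computation of $P$ on $\iota$ is an element of $L_{\alpha}$---a sequence of length $<\alpha$ of configurations satisfying the successor transition rule, the liminf-rule at limit stages, and ending in a halting configuration. This makes ``$P(\iota)$ halts with output $1$'' a $\Sigma_{1}$-condition over $L_{\alpha}$, and by determinism of the machine the negation is also $\Sigma_{1}$, yielding the required $\Delta_{1}(L_{\alpha})$-classification of the decided subset of $\alpha$.

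To prove the bounding lemma, I would argue by contradiction. Suppose $P(\iota)$ runs for at least $\alpha$ many steps without halting and without crashing. The configuration space $\omega\times\alpha^{n}$ has cardinality $\alpha$, and the liminf-rule at a limit stage $\lambda$ is naturally $\Sigma_{2}$-expressible over the trajectory below $\lambda$: the assertion $\liminf_{\tau<\lambda}R(\tau)=\xi$ combines an $\exists$-quantifier locating a tail on which $R\geq\xi$ with a $\forall\exists$-condition enforcing cofinal occurrence of $R\leq\xi$. Using $\Sigma_{2}$-admissibility of $\alpha$, one then reflects the $\Sigma_{2}$-statement ``there exist limit stages $\mu<\lambda$ with $c_{\mu}=c_{\lambda}$ such that the liminf-histories along $[\mu,\lambda)$ reproduce themselves'' down into $L_{\alpha}$, obtaining $\mu<\lambda<\alpha$ at which the computation genuinely cycles, and hence cannot halt---contradiction.

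The anticipated main obstacle is the precise formulation of the cycle-detection condition as a $\Sigma_{2}$-formula that, when reflected, truly forces divergence. Two points require care: first, the liminf-rule must be encoded so that the reflected witness yields a genuine period rather than a spurious configuration-coincidence; second, the relaxed jump syntax allowing Boolean combinations of register-equality tests---introduced in the preceding exposition---will be exploited so that the machine recognizes recurrence of a stored configuration in a single step, which is essential for the cycle argument to be robust at the recurrence stages. Both refinements are technical but not deep, and $\Sigma_{2}$-admissibility is precisely the closure hypothesis that makes this reflection argument go through.
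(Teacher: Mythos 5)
Your overall architecture is the right one and matches the paper's: the easy inclusion via the Koepke--Seyfferth simulation, and the hard inclusion via a bounding lemma showing that halting $\alpha$-wITRM computations halt before time $\alpha$, so that the (deterministic, always-halting) computation of length $<\alpha$ lies in $L_{\alpha}$ and witnesses a $\Delta_{1}$ definition. The gap is in your proof of the bounding lemma, and it sits exactly at the point where the hypothesis on $\alpha$ has to do its work. Your encoding of the liminf rule --- ``an $\exists$-quantifier locating a tail on which $R\geq\xi$ together with cofinal occurrence of $R\leq\xi$'' --- is not a correct description of $\liminf_{\tau<\lambda}R(\tau)=\xi$: it silently excludes the case in which the register contents approach $\xi$ strictly from below (a \emph{proper limit} in the paper's terminology), in which case there is no tail on which $R\geq\xi$ and the value $\xi$ need never have occurred in that register before time $\lambda$. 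The correct statement is ``for every $\eta<\xi$ there is a tail on which $R>\eta$, and cofinally often $R\leq\xi$'', which is $\Pi_{3}$, not $\Sigma_{2}$; and excluding proper limits at time $\alpha$ is precisely where $\Sigma_{2}$-admissibility is consumed (a proper limit would yield a definable cofinal map from some $\xi<\alpha$ into $\alpha$ of low complexity, contradicting the relevant collection principle). With your simplified encoding, the recurrence of the stored configuration that drives your cycle-detection argument simply need not happen.

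Second, the reflection step is applied to the wrong statement. You propose to reflect ``there exist limit stages $\mu<\lambda$ with $c_{\mu}=c_{\lambda}$ such that the liminf-histories reproduce themselves'' --- but that is the conclusion you are trying to establish, not a statement already known to hold in $L_{\alpha}$, and $\Sigma_{2}$-admissibility does not directly license reflecting existential assertions in any case. The argument that works (and is the one the paper uses, via the strengthening to $\Pi_{3}$-reflecting ordinals, every $\Sigma_{2}$-admissible ordinal being $\Pi_{3}$-reflecting) is: write down the $\Pi_{3}$ description of the configuration $c$ reached at time $\alpha$, reflect it to unboundedly many $\bar{\alpha}<\alpha$ to conclude that $c$ itself recurs cofinally often below $\alpha$, and only then invoke a separately proved looping criterion (recurrence of the time-$\alpha$ configuration at a limit time and cofinally often before it forces a strong loop) to conclude divergence. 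You correctly flag ``the reflected witness must yield a genuine period'' as the main obstacle, but the proposal does not supply the two ingredients needed to overcome it: the exclusion of proper limits and the looping criterion itself.
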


\section{$\alpha$-ITRMs and ZFC$^{-}$}

For the definition of $\alpha$-ITRMs, we refer to \cite{K1} or our sketch above. A set $x\subseteq\alpha$ is $\alpha$-ITRM-decidable or $\alpha$-ITRM-computable if and only if there are an $\alpha$-ITRM-program $P$ and a parameter $\gamma<\alpha$ such that, for all $\iota<\alpha$, we have $P(\iota,\gamma)\downarrow=1$ if and only if $\iota\in x$ and otherwise, $P(\iota,\gamma)\downarrow=0$. We denote the set of ITRM-computable sets by COMP$^{\text{ITRM}}_{\alpha}$.

In \cite{C}, we showed that, if $\kappa$ is an uncountable regular cardinal, then COMP$_{\kappa}^{\text{ITRM}}=\mathfrak{P}(\kappa)\cap L_{\kappa+1}$. Here, we will explore the matter further to reach a much stronger result.

We will occasionally write liminf$(c_{\iota}:\iota<\delta)$ or min$\{c_{\iota}:\iota<\delta\}$ where $\{c_{\iota}:\iota<\delta\}$ is a set of tuples $(c_{\iota}^{1},...,c_{\iota}^{k})$ of ordinals of fixed finite length $k$ to denote the tuple consisting of the component-wise inferior limits, minima etc., i.e.  $(\text{liminf}_{\iota<\delta}c^{1}_{\iota},...,\text{liminf}_{\iota<\delta}c^{k}_{\iota})$ and $(\text{min}_{\iota<\delta}c^{1}_{\iota},...,\text{min}_{\iota<\delta}c^{k}_{\iota})$, respectively.

When talking about $\alpha$-ITRM-computations in this paper, we always mean that parameters are allowed, even though we will not mention it. Throughout the paper, $p$ will denote Cantor's ordinal pairing function. If $(X,\in)$ is an $\in$-structure and $f:\alpha\rightarrow X$ is surjective, we will call $\{p(\iota,\xi):f(\iota)\in f(\xi)\}$ an $\alpha$-code for $X$. When $\alpha$ is clear from the context, the prefix $\alpha$ will occasionally be dropped.

For this section, let $\alpha$ be an exponentially closed ordinal. 

We will be working with ZF$^{-}$, which is ZFC without choice and the power set axiom; to be more precise, we use the formulation of ZF$^{-}$ established in \cite{GJH} as the most natural one. 

Our goal is to show the following result:

\begin{thm}{\label{zf- and itrm}}
	If $\alpha$ is exponentially closed, then $L_{\alpha}\models\text{ZF}^{-}$ if and only if COMP$^{\text{ITRM}}_{\alpha}=L_{\alpha+1}\cap\mathfrak{P}(\alpha)$.
\end{thm}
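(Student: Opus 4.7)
The plan is to prove the two directions separately. The forward implication ($\Rightarrow$) refines the main argument of \cite{C}, observing that its actual input is $\text{ZF}^{-}$ rather than regularity of $\alpha$ in $L_{\alpha^{\omega}}$; the converse ($\Leftarrow$) is new and proceeds by contrapositive, using a failure of collection in $L_{\alpha}$ to bound computable subsets strictly inside $L_{\alpha+1}\cap\mathfrak{P}(\alpha)$.

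\textbf{Forward direction.} Assume $L_{\alpha}\models\text{ZF}^{-}$. For the inclusion $\text{COMP}^{\text{ITRM}}_{\alpha}\subseteq L_{\alpha+1}\cap\mathfrak{P}(\alpha)$, I would argue that an $\alpha$-ITRM-computation is a $\Sigma_{1}$-recursion whose successor and liminf-plus-reset steps are absolute to $L_{\alpha}$. Under $\text{ZF}^{-}$, a halting computation of bounded input can be coded by an element of $L_{\alpha}$ (any proper-class-in-$L_{\alpha}$ halting time would contradict collection via the $\Sigma_{n}$-description of the computation), so $\{\iota<\alpha:P(\iota,\gamma)\downarrow=1\}$ is $\Sigma_{1}$ over $L_{\alpha}$ with parameter $\gamma$, hence a member of $L_{\alpha+1}$. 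For the reverse inclusion $L_{\alpha+1}\cap\mathfrak{P}(\alpha)\subseteq\text{COMP}^{\text{ITRM}}_{\alpha}$, given a formula $\phi(x,\bar{p})$ defining $X\subseteq\alpha$ over $L_{\alpha}$, I would build a program maintaining an $\alpha$-code for successive $L_{\beta}$, exploiting the reset dynamics to survive the register overflows that occur at limit stages (this is the machinery developed in \cite{C}). $\text{ZF}^{-}$ in $L_{\alpha}$ supplies the reflection principle producing cofinally many $\beta<\alpha$ with $L_{\beta}\prec_{\phi}L_{\alpha}$, and a witnessed reflection-check subroutine then allows the machine to halt once truth of $\phi(\iota,\bar{p})$ has stabilized at the current $\beta$.

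\textbf{Converse.} Assume $L_{\alpha}\not\models\text{ZF}^{-}$. Using the formulation of \cite{GJH}, I would extract a failure of collection, yielding a $\Sigma_{n}$-definable cofinal function $f:\beta\to\alpha$ with $\beta<\alpha$. The main claim is then that every halting $\alpha$-ITRM-program has halting times uniformly bounded below some $\delta<\alpha+1$ already defined from $f$; consequently $\text{COMP}^{\text{ITRM}}_{\alpha}\subseteq L_{\delta}\cap\mathfrak{P}(\alpha)$. A straightforward diagonal argument, or more concretely an $\alpha$-code for the $\Sigma_{n}$-satisfaction predicate of $L_{\alpha}$, then produces a member of $L_{\alpha+1}\cap\mathfrak{P}(\alpha)$ outside $L_{\delta}$, witnessing the required strict inclusion.

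\textbf{Main obstacle.} The delicate step is this halting-time bound in the converse: extracting, from an arbitrary $\Sigma_{n}$-failure of collection in $L_{\alpha}$, a definable ordinal $\delta<\alpha+1$ that provably bounds halting times of \emph{all} halting $\alpha$-ITRM-programs. The difficulty is disentangling the combinatorics of the liminf-plus-reset dynamics from the definability hierarchy of $L_{\alpha}$, since a single program may realize very different halting times across different inputs $\iota<\alpha$; a careful clockable-ordinal analysis, likely identifying $\delta$ with $\beta(\alpha)$ as in the abstract, appears necessary to pin the bound down uniformly.
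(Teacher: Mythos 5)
Your converse direction goes the wrong way. When $L_{\alpha}\not\models\text{ZF}^{-}$, the equality COMP$^{\text{ITRM}}_{\alpha}=L_{\alpha+1}\cap\mathfrak{P}(\alpha)$ fails because the machines become \emph{stronger}, not weaker: by Lemma \ref{zf- char} the failure of ZF$^{-}$ makes $\alpha$ singular in $L_{\alpha+1}$, the singularizing function is itself in $L_{\alpha+1}$ and hence computable (Lemma \ref{succ level comp} needs only exponential closure, not ZF$^{-}$), and with a cofinal map $f:\delta\rightarrow\alpha$, $\delta<\alpha$, in hand one can implement a stack whose limit behaviour survives the liminf rule (Lemma \ref{limit preservation}) and thereby evaluate the \emph{full} truth predicate of $L_{\alpha}$ (Lemma \ref{singular truth predicate evaluation}). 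A diagonal set built from that truth predicate is then computable but lies \emph{outside} $L_{\alpha+1}$. Your plan instead tries to extract from the failure of collection a halting-time bound $\delta<\alpha+1$ and conclude COMP$^{\text{ITRM}}_{\alpha}\subseteq L_{\delta}\cap\mathfrak{P}(\alpha)$; this is untenable, since every set in $L_{\alpha+1}\cap\mathfrak{P}(\alpha)$ is computable for any exponentially closed $\alpha$ regardless of ZF$^{-}$, and since $\beta(\alpha)$ (with which you propose to identify $\delta$) always exceeds $\alpha$ --- indeed $\beta(\alpha)\geq\alpha^{+\omega}$ when $\alpha$ is an index. The essential idea you are missing is that singularity is a computational \emph{resource} for resetting machines.

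The forward direction is closer to the paper but also glosses over the real difficulty. Halting times under ZF$^{-}$ are \emph{not} bounded by $\alpha$ --- programs clocking exactly $\alpha^{n}$ exist for every $n$ (Corollary \ref{clockable supremum}) --- so a halting computation is generally not codable inside $L_{\alpha}$, and the halting set is not $\Sigma_{1}$ over $L_{\alpha}$. The paper's argument has to (i) prove the bound $\alpha^{n+1}$ on halting times for $n$-register programs by an induction on registers using the looping criterion and the absence of proper limits at times $\alpha^{k}$ (Lemma \ref{no bad limits}, Theorem \ref{halting time bound}), and (ii) show that the snapshot functions $F_{P}^{\alpha^{k}}$, $G_{P}^{\alpha^{k}}$ describing the computation at and up to time $\alpha^{k}$ are $\Sigma_{4k}$-definable over $L_{\alpha}$ (Lemma \ref{f g h defbl}); only then does membership of the decided set in $L_{\alpha+1}$ follow. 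Both steps use replacement/collection in $L_{\alpha}$ in an essential way, and neither is supplied by the appeal to ``absoluteness of the recursion'' in your sketch. For the reverse inclusion in this direction, your reflection-based subroutine is unnecessary: the paper simply evaluates the bounded truth predicate (Koepke--Seyfferth) and handles each unbounded quantifier by an exhaustive search through $\alpha$, detecting completion via register overflow.
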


Let us say that $\alpha$ is a ZF$^{-}$-ordinal if and only if $L_{\alpha}\models\text{ZF}^{-}$. We will frequently and freely use the following folklore characterization of ZF$^{-}$-ordinals, the proof of which we recall for the convenience of the less set-theoretically minded reader:

\begin{lemma}{\label{zf- char}}
	$\alpha$ is a ZF$^{-}$-ordinal if and only if $\alpha$ is regular in $L_{\alpha+1}$. 
\end{lemma}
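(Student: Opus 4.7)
The plan is to prove the two implications separately. Since the paper is working in the regime where $\alpha$ is exponentially closed, we have that $\alpha > \omega$ is a limit ordinal, so $L_\alpha$ automatically satisfies Extensionality and Foundation (by transitivity), Pairing and Union (as the $L$-hierarchy is closed under these at limit stages), and Infinity (as $\omega \in L_\alpha$). Only Separation and Replacement require genuine argument.

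For the forward direction, I would argue by contrapositive. Suppose $\alpha$ is not regular in $L_{\alpha+1}$, witnessed by some $\gamma < \alpha$ and a cofinal $f : \gamma \to \alpha$ with $f \in L_{\alpha+1}$. Then $f$ is definable over $L_\alpha$ with parameters in $L_\alpha$, so the class function $\xi \mapsto f(\xi)$ is $L_\alpha$-definable. Applying Replacement inside $L_\alpha$ to this class function and the set $\gamma \in L_\alpha$ yields $f[\gamma] \in L_\alpha$, and then the Union axiom gives $\bigcup f[\gamma] = \alpha \in L_\alpha$, contradicting $\alpha \notin L_\alpha$.

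For the reverse direction, assume $\alpha$ is regular in $L_{\alpha+1}$. Separation in $L_\alpha$ follows from a standard reflection argument, using only that $\alpha$ is a limit: given a formula $\phi$ and parameters $A, \vec{p} \in L_\alpha$, choose $\beta < \alpha$ in the club of reflection points for $\phi$ with $A, \vec{p} \in L_\beta$; then $\{x \in A : L_\alpha \models \phi(x, \vec{p})\}$ equals $\{x \in A : L_\beta \models \phi(x, \vec{p})\} \in L_{\beta+1} \subseteq L_\alpha$. For Replacement, which is where regularity enters, consider $A \in L_\alpha$ and a class function $F$ definable over $L_\alpha$. Define $g: A \to \alpha$ by letting $g(a)$ be the least $\eta$ with $F(a) \in L_\eta$, and fix a bijection $h : \tau \to A$ with $\tau < \alpha$ and $h \in L_\alpha$, which exists since every element of $L_\alpha$ admits such an enumeration. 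The composition $g \circ h : \tau \to \alpha$ lies in $L_{\alpha+1}$, so regularity of $\alpha$ in $L_{\alpha+1}$ bounds its image by some $\delta < \alpha$, giving $F[A] \subseteq L_\delta$. A second reflection argument then places the definable subset $F[A]$ of $L_\delta$ inside some $L_{\beta+1}$ with $\beta < \alpha$, and hence inside $L_\alpha$.

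The main obstacle is the Replacement step in the reverse direction, since it is the only place where the regularity hypothesis is actually used, and it requires combining two ingredients: bounding the $L$-ranks of elements of $F[A]$ using regularity of $\alpha$ in $L_{\alpha+1}$, and then extracting the resulting bounded definable collection as an element of $L_\alpha$ via reflection. The forward direction and Separation are comparatively routine applications of standard $L$-hierarchy facts.
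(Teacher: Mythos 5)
Your forward direction matches the paper's argument exactly and is fine. The reverse direction, however, has a genuine gap, located precisely where you declare Separation to be ``comparatively routine.'' Your claim that Separation in $L_{\alpha}$ follows from reflection ``using only that $\alpha$ is a limit'' is false: for a fixed formula $\phi$, the set of $\beta<\alpha$ with $L_{\beta}$ reflecting $\phi$ relative to $L_{\alpha}$ is closed but need not be unbounded in $\alpha$ when $\alpha$ is merely a limit ordinal, so there is no ``club of reflection points'' to appeal to. Indeed, Separation genuinely fails at many limit ordinals --- for instance at any index ordinal $\alpha$, where some $\{n\in\omega:L_{\alpha}\models\phi(n)\}$ lies in $L_{\alpha+1}\setminus L_{\alpha}$ --- which is exactly why the theorem as a whole is nontrivial. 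The same defect infects your ``second reflection argument'' at the end of the Replacement step: once you know $F[A]\subseteq L_{\delta}$, the set $F[A]$ is a subset of $L_{\delta}$ definable over $L_{\alpha}$, and getting it \emph{into} $L_{\alpha}$ is an instance of the very Separation you have not yet established.

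The paper avoids both problems by reversing the logical order: it proves Collection directly from regularity (bounding, for each $x\in X$, the least level $L_{\gamma}$ containing a witness, exactly as in your $g$, and then simply taking $Y=L_{\eta}$ for $\eta$ a bound on the image --- no exact image set is ever formed), and then obtains Separation as a standard consequence of Collection plus $\Delta_{0}$-Separation, which does hold at all limit levels. A second, related point: the paper explicitly adopts the Gitman--Johnstone--Hamkins formulation of ZF$^{-}$, in which the scheme is Collection rather than Replacement, and without the power set axiom Replacement does not imply Collection; so even if your Replacement argument were repaired, you would be verifying a weaker theory than the one the lemma asserts. The fix is easy --- your bounding step is already the core of the Collection argument --- but as written the proof does not go through.
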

\begin{proof}
	Suppose that $\alpha$ is singular in $L_{\alpha+1}$; pick $\delta<\alpha$, $f:\delta\rightarrow\alpha$ such that $f\in L_{\alpha+1}$ and $f$ maps $\delta$ cofinally into $\alpha$. Towards a contradiction, assume that $L_{\alpha}\models$ZF$^{-}$. By assumption, $f$ is definable over $L_{\alpha}$; let $\phi$ be a formula, $\vec{p}\subseteq\alpha$ finite such that $f(\iota)=\xi$ holds if and only if $L_{\alpha}\models\phi(\iota,\xi,\vec{p})$. Then $f$ is a functional class in $L_{\alpha}$ and thus, by replacement in $L_{\alpha}$, $f[\delta]$ is an element of $L_{\alpha}$. But then, by the axiom of union in $L_{\alpha}$, we also have $\alpha=\bigcup f[\delta]\in L_{\alpha}$, a contradiction.
	
	Now assume that $\alpha$ is regular in $L_{\alpha+1}$. The only axioms to check are comprehension and collection (all other axioms of ZF$^{-}$ hold in all limit levels of the $L$-hierarchy that contain $\omega$). As comprehension is a consequence of collection, it suffices to deal with the latter. So let $\phi$ be a formula and $X,\vec{p}\in L_{\alpha}$ such that $L_{\alpha}\models\forall{x\in X}\exists{y}\phi(x,y,\vec{p})$. We need to show that there is $Y\in L_{\alpha}$ such that $L_{\alpha}\models\forall{x\in X}\exists{y\in Y}\phi(x,y,\vec{p})$. Let $\beta$ be minimal such that $X\in L_{\beta}$. Then $\beta<\alpha$ (as $\alpha$ is a limit ordinal) and moreover, by standard finestructure, there is a bijection $g:\beta\rightarrow L_{\beta}$ in $L_{\beta+1}$ and thus also a surjection $h:\beta\rightarrow X$. Now, the function $f:X\rightarrow\alpha$ mapping each $x\in X$ to the minimal $\gamma\in\alpha$ such that $L_{\gamma}$ contains some $y$ with $L_{\alpha}\models\phi(x,y,\vec{p})$ is clearly definable over $L_{\alpha}$ and thus contained in $L_{\alpha+1}$. By regularity of $\alpha$ in $L_{\alpha+1}$, the image of $f\circ h$ must be bounded in $\alpha$, say be $\eta$. Then $Y=L_{\eta}$ is as desired.
\end{proof}



We now show the direction from left to right. 

\begin{prop}{\label{min liminf}}
	Let $(\alpha_{\iota}:\iota<\delta)$ be an increasing sequence of ordinals, where $\delta$ is a limit ordinal, and let $\alpha:=\text{sup}_{\iota<\delta}\alpha_{\iota}$. Let $(\beta_{\iota}:\iota<\alpha)$ be another sequence of ordinals (by definition, $\alpha$ is a limit ordinal). Moreover, let $\mu_{\xi}=\text{min}\{\beta_{\iota}:\iota<\alpha_{\xi}\}$ for $\xi<\delta$. 
	
	Then $\text{liminf}_{\iota<\alpha}\beta_{\iota}=\text{liminf}_{\xi<\delta}\mu_{\xi}$.
\end{prop}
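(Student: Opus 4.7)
The plan is to unfold the definition of $\liminf$ on both sides and align them via the cofinality of $(\alpha_\xi : \xi<\delta)$ in $\alpha$.

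First, I would record a monotonicity observation: since $(\alpha_\xi)$ is strictly increasing in $\xi$, the set $\{\beta_\iota : \iota<\alpha_\xi\}$ grows monotonically with $\xi$, and hence $(\mu_\xi)$ is a non-increasing sequence of ordinals. By well-foundedness of $\mathrm{Ord}$, the sequence stabilizes from some stage $\xi_0 < \delta$ onward at a value $\mu^*$, and therefore $\liminf_{\xi<\delta}\mu_\xi = \mu^*$. This reduces the right-hand side to an eventual-value computation.

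Second, I would unfold $\liminf_{\iota<\alpha}\beta_\iota = \sup_{\gamma<\alpha}\inf\{\beta_\iota : \gamma\leq\iota<\alpha\}$ and use that $(\alpha_\xi)$ is cofinal in $\alpha$ to replace the outer supremum over all $\gamma<\alpha$ by one ranging over $\xi<\delta$, indexing by the cofinal values $\alpha_\xi$: every $\gamma<\alpha$ is dominated by some $\alpha_\xi$, and each $\alpha_\xi<\alpha$ is itself such a $\gamma$, so the two families of tail infima are cofinally interleaved.

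Third, I would identify each resulting tail infimum of $(\beta_\iota)$ with the corresponding quantity expressible via $\mu_\xi$, exploiting the monotonicity of step one to keep track of where the minimum on a shorter initial segment sits inside a longer one. Taking the outer supremum then collapses both sides to $\mu^*$, giving the claimed equality.

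The main subtlety will be the third step, where one has to carefully align the $\min$-based definition of $\mu_\xi$ (a minimum taken over an initial segment indexed by $\alpha_\xi$) with the tail infima of $(\beta_\iota)$ that appear in the expansion of $\liminf_{\iota<\alpha}\beta_\iota$; cofinality of $(\alpha_\xi)$ in $\alpha$ is what translates between the two scales and makes the identification possible.
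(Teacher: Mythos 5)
Your first step is correct, but it already shows that your third step cannot succeed. Reading $\mu_{\xi}=\min\{\beta_{\iota}:\iota<\alpha_{\xi}\}$ literally as an initial-segment minimum, as you explicitly do, the non-increasing sequence $(\mu_{\xi})_{\xi<\delta}$ stabilizes at $\mu^{*}=\min\{\beta_{\iota}:\iota<\alpha\}$, the \emph{global} minimum of the whole sequence; but $\liminf_{\iota<\alpha}\beta_{\iota}$ is in general strictly larger than the global minimum, because the liminf discards the early values of $\beta$ while every $\mu_{\xi}$ retains them. Concretely, take $\delta=\omega$, $\alpha_{n}=n+1$ (so $\alpha=\omega$), $\beta_{0}=0$ and $\beta_{n}=1$ for $n\geq 1$: then $\mu_{n}=0$ for every $n$, so $\liminf_{\xi<\delta}\mu_{\xi}=0$, while $\liminf_{\iota<\omega}\beta_{\iota}=1$. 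So the identification you defer to step three is not a subtlety to be handled with care; under your reading it is simply false, and only the inequality $\liminf_{\xi<\delta}\mu_{\xi}\leq\liminf_{\iota<\alpha}\beta_{\iota}$ survives.

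What this reveals is a defect in the statement rather than only in your argument: the proposition is true, and is what the application in Lemma \ref{f g h defbl} requires (there the quantities $D^{k}_{\iota+1}(\iota,c)$ are minima over the blocks $[\alpha^{k}\iota,\alpha^{k}(\iota+1))$), only when $\mu_{\xi}$ is the minimum of the $\beta_{\iota}$ over the block of indices between consecutive members of the cofinal sequence, not over the whole initial segment. The paper's own two-inequality proof tacitly assumes this when it infers, from ``$\beta_{\iota}>\liminf_{\xi<\delta}\mu_{\xi}$ for all $\iota>\gamma$'', that the same bound holds for $\mu_{\xi}$ once $\alpha_{\xi}>\gamma$ --- an inference that fails if $\mu_{\xi}$ still sees $\beta_{0},\dots,\beta_{\gamma}$. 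Under the block reading your monotonicity observation is lost, but your second step does essentially all the work: each block with index $\geq\gamma$ lies inside the tail $\{\iota:\iota\geq\alpha_{\gamma}\}$, and (since the relevant sequences $\xi\mapsto\alpha_{\xi}$ in the application are continuous) the blocks with index $\geq\gamma$ cover that tail, so, using that every nonempty set of ordinals attains its infimum, $\inf\{\mu_{\xi}:\xi\geq\gamma\}=\inf\{\beta_{\iota}:\iota\geq\alpha_{\gamma}\}$ for each $\gamma<\delta$; taking suprema and invoking cofinality of $(\alpha_{\xi})$ in $\alpha$ yields the equality of the two liminfs with no third step needed. I would restate the proposition with the corrected definition of $\mu_{\xi}$ before attempting a proof.
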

\begin{proof}
	We first show that $\text{liminf}_{\iota<\alpha}\beta_{\iota}\leq\text{liminf}_{\xi<\delta}\mu_{\xi}$.
	
	Suppose for a contradiction that $\text{liminf}_{\xi<\delta}\mu_{\xi}<\text{liminf}_{\iota<\alpha}\beta_{\iota}$. Thus, for some $\gamma<\alpha$, we have $\beta_{\iota}>\text{liminf}_{\xi<\delta}\mu_{\xi}$ for all $\iota>\gamma$. But then, the same holds for $\mu_{\xi}$ as soon as $\alpha_{\xi}>\gamma$, contradicting the definition of $\text{liminf}_{\xi<\delta}\mu_{\xi}$.
	
	\bigskip
	
	Now we show that $\text{liminf}_{\iota<\alpha}\beta_{\iota}\geq\text{liminf}_{\xi<\delta}\mu_{\xi}$.
	
	Suppose for a contradiction that $\text{liminf}_{\iota<\alpha}<\text{liminf}_{\xi<\delta}\mu_{\xi}$.
	
	Thus, there is $\gamma<\delta$ such that, for all $\xi>\gamma$, we have $\mu_{\xi}>\text{liminf}_{\iota<\alpha}\beta_{\iota}$. But then, we have 
	$\beta_{\xi}>\text{liminf}_{\iota<\alpha}\beta_{\iota}$ for all sufficiently large $\xi$, contradicting the definition of $\text{liminf}_{\iota<\alpha}\beta_{\iota}$.
\end{proof}

The next lemma is a strenghthening of a lemma that was proved in joint discussion with Philipp Schlicht and originally published in \cite{C}, namely that the definability of $F_{P}^{\alpha^{n}}$ over $L_{\alpha}$ holds when $\alpha$ is an uncountable regular cardinal.

\begin{lemma}{\label{f g h defbl}}
	Let $L_{\alpha}\models\text{ZF}^{-}$, let $P$ be an $\alpha$-ITRM-program using $n$ many registers, and let $\beta$ be an ordinal. Denote by $F_{P}^{\beta}$ the function that maps a $P$-configuration $c=(l,r_{1},...,r_{n})$ to the $P$-configuration arising by running $P$ for $\beta$ many steps with initial configuration $c$.
	
	Moreover, let $G_{P}^{\beta}$ be the function that maps a $P$-configuration $c=(l,r_{1},...,r_{n})$ to the tuple $(l^{\prime},r_{1}^{\prime},...,r_{n}^{\prime})$, where $l^{\prime}$ is the minimal program line index in any $P$-configuration occuring when one runs $P$ on $c$ for $\beta$ many steps; and similarly, $r_{1}^{\prime},...,r_{n}^{\prime}$ are the minimal contents of the registers $R_{1},...,R_{n}$ used by $P$ during this computation.
	
	Finally, for $0<n\in\omega$, $\hat{c}:=(\hat{l},\hat{r}_{1},...,\hat{r}_{n})\in\omega\times\alpha^{n}$, 
	let $H_{P}^{\beta,\hat{c}}$ be the function that maps a $P$-configuration $c$ to the binary sequence $(s_{0},...,s_{n})\in\{0,1\}^{n+1}$, where $s_{i}=1$ if and only if, for some configuration $c^{\prime}$ occuring during the $P$-computation of length $\beta$ starting with $c$, the $i$th component of $c^{\prime}$ coincides with the $i$th component of $\hat{c}$, and otherwise, $s_{i}=0$.
	
	Then $F_{P}^{\alpha^{k}}$, $G_{P}^{\alpha^{k}}$ and $H_{P}^{\alpha^{k}}$ are definable over $L_{\alpha}$, and in fact by $\Sigma_{4k}$-formulas for $0<k\in\omega$ (and thus in particular contained in $L_{\alpha+1}$).
\end{lemma}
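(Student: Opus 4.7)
The plan is to proceed by induction on $k\geq 1$, decomposing a length-$\alpha^{k+1}$ computation into $\alpha$ consecutive blocks of length $\alpha^k$, using $F_P^{\alpha^k}$ to advance from block to block and $G_P^{\alpha^k}$ to track running minima, and then invoking Proposition \ref{min liminf} to express the length-$\alpha^{k+1}$ liminf in terms of these per-block data.

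For the base case $k=1$, since $L_\alpha\models\text{ZF}^-$ supports transfinite recursions of length below $\alpha$, the partial computation sequence $(s_\iota:\iota<\beta)$ of $P$ from $c$ is in $L_\alpha$ for every $\beta<\alpha$, so the relation ``register $j$ has content $v$ at stage $\iota$'' is $\Delta_1$ over $L_\alpha$. The $j$-th component of $F_P^\alpha(c)$ is the liminf of $s_\iota[j]$ along $\iota<\alpha$ (reset to $0$ if this liminf equals $\alpha$); the liminf condition unfolds as $\forall\nu<\mu\,\exists\gamma<\alpha\,\forall\iota\geq\gamma\,(s_\iota[j]>\nu)$ together with $\forall\gamma<\alpha\,\exists\iota\geq\gamma\,(s_\iota[j]\leq\mu)$, which is $\Pi_3$. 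The analogous characterisations of $G_P^\alpha$ (a minimum, $\Pi_2$) and $H_P^{\alpha,\hat c}$ (an existence, $\Sigma_1$) have lower complexity, so all three are $\Sigma_4=\Sigma_{4\cdot 1}$-definable.

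For the inductive step, assume the claim at level $k$ and write $\alpha^{k+1}=\alpha^k\cdot\alpha$. Define a macro-sequence $(c_\xi,\mu_\xi:\xi<\alpha)$ by $c_0=\mu_0=c$, $c_{\xi+1}=F_P^{\alpha^k}(c_\xi)$, $\mu_{\xi+1}[j]=\min(\mu_\xi[j],G_P^{\alpha^k}(c_\xi)[j])$, and at limits $\lambda$, $c_\lambda=\mu_\lambda$ with $\mu_\lambda[j]=\min_{\xi<\lambda}\mu_\xi[j]$. The identity $c_\lambda=\mu_\lambda$ at limits is exactly Proposition \ref{min liminf} applied to the true configuration sequence, once one observes that $(\mu_\xi)_\xi$ is componentwise weakly decreasing and bounded above by $c_0$; this monotonicity also guarantees that no reset at the outer limits is ever triggered. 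By the inductive hypothesis, $F_P^{\alpha^k}$ and $G_P^{\alpha^k}$ are $\Sigma_{4k}$-definable class functions on $L_\alpha$ with values in $L_\alpha$, and $\Sigma_{4k}$-collection in $L_\alpha\models\text{ZF}^-$ ensures that each initial segment of the macro-sequence of length strictly less than $\alpha$ is a set in $L_\alpha$. A final application of Proposition \ref{min liminf} yields $F_P^{\alpha^{k+1}}(c)[j]=G_P^{\alpha^{k+1}}(c)[j]=\min_{\xi<\alpha}\mu_\xi[j]$, while $H_P^{\alpha^{k+1},\hat c}(c)$ has $1$ at position $j$ iff $\exists\xi<\alpha$ for which $H_P^{\alpha^k,\hat c}(c_\xi)$ has $1$ at position $j$.

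The main obstacle is the quantifier bookkeeping. Expressing ``the macro-sequence up to $\xi$ is correct and has $\mu_\xi=m$'' demands existentially positing the initial segment in $L_\alpha$ ($\Sigma_1$) plus a bounded universal assertion that each successor step matches the $\Sigma_{4k}$-definitions of $F_P^{\alpha^k}$ and $G_P^{\alpha^k}$; prenexing $\exists f\,\forall\eta<\xi\,\Sigma_{4k}$ yields $\Sigma_{4k+2}$. The outer minimum $\min_{\xi<\alpha}\mu_\xi[j]=\nu$ is then the conjunction of a $\Sigma_{4k+2}$ existential ($\exists\xi\,(\mu_\xi[j]=\nu)$) with a $\Pi_{4k+2}$ universal ($\forall\xi\,(\mu_\xi[j]\geq\nu)$), which prenexes to $\Sigma_{4k+4}=\Sigma_{4(k+1)}$. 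The fact that the outermost liminf never requires a reset, noted above, is what lets us equate liminf with $\min$ and keep the count at the claimed level; resets inside the blocks are absorbed into the inductive definitions of $F_P^{\alpha^k}$ and $G_P^{\alpha^k}$.
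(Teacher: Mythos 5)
Your overall strategy --- decomposing a length-$\alpha^{k+1}$ run into $\alpha$ blocks of length $\alpha^{k}$, a simultaneous induction on $F$ and $G$, and an appeal to Proposition \ref{min liminf} --- is the same as the paper's, and your base case is essentially the paper's. But the inductive step contains a genuine error: you conflate the inferior limit with the minimum. At a limit stage $\lambda$ of your macro-recursion you set $c_{\lambda}=\mu_{\lambda}=\min_{\xi<\lambda}\mu_{\xi}$, the \emph{cumulative} minimum of all earlier block data; the actual configuration at time $\alpha^{k}\lambda$ is the \emph{inferior limit} of the configuration sequence, which equals the inferior limit of the per-block minima --- and this can be strictly larger than the cumulative minimum (a register may dip to $0$ once in the first block and then sit at $5$ forever; the liminf at the limit is $5$, while your $\mu_{\lambda}$ is $0$). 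So your macro-sequence does not track the true configurations, and the concluding identity $F_{P}^{\alpha^{k+1}}(c)=G_{P}^{\alpha^{k+1}}(c)=\min_{\xi<\alpha}\mu_{\xi}$ is false for the same reason: $F$ is a liminf and $G$ is a minimum, and in general $G_{P}^{\alpha^{k+1}}(c)$ is componentwise $\leq F_{P}^{\alpha^{k+1}}(c)$ with strict inequality possible. The paper avoids this by carrying two distinct sequences through the recursion: the block-boundary configurations $C^{k}_{\gamma}(\iota,c)$, defined at limits as the \emph{liminf} of the per-block minima, and the per-block minima $D^{k}_{\gamma}(\iota,c)$ themselves; it then sets $F_{P}^{\alpha^{k+1}}(c)=\text{liminf}_{\iota<\alpha}D^{k}_{\iota+1}(\iota,c)$ but $G_{P}^{\alpha^{k+1}}(c)=\text{min}\{D^{k}_{\iota+1}(\iota,c):\iota<\alpha\}$ --- two different operations applied to the same data.

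A second, smaller gap: you claim that monotonicity of $(\mu_{\xi})$ guarantees that ``no reset at the outer limits is ever triggered.'' A reset occurs when the liminf of a register's contents equals $\alpha$, i.e.\ when the contents tend to $\alpha$ from below; boundedness of the cumulative minimum says nothing about this (contents $1,2,3,\dots$ cofinal in $\alpha$ have cumulative minimum $1$ but liminf $\alpha$). Ruling out such ``proper limits'' at times $\alpha^{k}$ is a separate, nontrivial fact (Lemma \ref{no bad limits}), which the paper proves later \emph{using} the present lemma; it cannot be assumed here, and the definition of $F$ must simply incorporate the reset clause. Once the recursion is repaired as above, your quantifier bookkeeping ($\Sigma_{4k+2}$ for correctness of an initial segment of the macro-sequence, hence $\Sigma_{4k+4}$ for the outer liminf and minimum) goes through in essentially the same way as in the paper.
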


\begin{proof}
	We will prove the definability of $F_{P}^{\alpha^{k}}$ and $G_{P}^{\alpha^{k}}$ by a simultaneous induction on $k$; the definability of $H_{P}^{\alpha^{k}}$ will then be an easy consequence of this proof.
	
	Let $k=1$, and let $c=(l,r_{1},...,r_{n})$. 
	
	As $\alpha$ is a limit ordinal, any partial computation of $P$ of length $\gamma<\alpha$ for any starting configuration will be contained in $L_{\alpha}$.
	
	Then, simply by expressing the liminf-rule, $F_{P}^{\alpha}(c)$ is definable over $L_{\alpha}$ as follows:
	
	$F_{P}^{\alpha}(c)=(l^{\prime},r_{1}^{\prime},...,r_{n}^{\prime})$ if and only if all of the following hold: 
	
	\begin{enumerate}
		
		\item For all $\iota<\alpha$, there is a $P$-computation of length $\iota+1$ starting with $c$ with active program line $l^{\prime}$ in the final configuration. This is expressable by a $\Pi_{2}$-formula.
		
		\item There is $\gamma<\alpha$ such that, for all $\iota\in(\gamma,\alpha)$, and all $P$-computations of length $\iota+1$ starting with $c$, the final configuration will have an active program line with index $\geq l^{\prime}$. This condition is expressable by a $\Sigma_{2}$-formula.
		
		\item For all $\rho_{i}<r_{i}^{\prime}$ ($1\leq i\leq n$), there is $\gamma_{i}<\alpha$ such that, for all $\iota>\gamma_{i}$, all $P$-computations of length $\iota+1$ starting with $c$ have an ordinal $\geq\rho_{i}$ in their $i$th register in their final configuration. This condition is expressable by a $\Pi_{3}$-formula.
		
		\item For all $\gamma<\alpha$, there is $\iota\in(\gamma,\alpha)$ such that there is a $P$-computation of length $\iota+1$ starting with $c$ such that, at time $\iota+1$, the $i$th register contains an ordinal $<r_{i}^{\prime}$ for all $1\leq i\leq n$. This condition is expressable by a $\Pi_{2}$-formula.
		
	\end{enumerate}
	
	Thus, $F_{P}^{\alpha}$ is $\Pi_{3}$-definable over $L_{\alpha}$.
	
	Moreover, $G_{P}^{\alpha}(c)=(\bar{l},\bar{r}_{1},...,\bar{r}_{n})$ holds if and only if the following conditions hold:
	
	\begin{enumerate}
		\item There are $\iota_{0},...,\iota_{n}<\alpha$ and $P$-computations 
		$c_{\iota_{0}},...,c_{\iota_{n}}$ starting with $c$ of length $\iota_{0}+1,...\iota_{n}+1$ respectively, such that $c_{\iota_{0}}$ has active program line $\bar{l}$, $c_{\iota_{1}}$ has content $\bar{r}_{1}$ in its first register, ..., $c_{\iota_{n}}$ has content $\bar{r}_{n}$ in its $n$th register in its final configuration. This is expressable by a $\Sigma_{1}$-formula.
		
		\item For all $\iota<\alpha$ and all $P$-computations of length $\iota+1$ starting with $c$, in the final configuration we have that the active program line index is $\geq\bar{l}$ and the content of the $i$th register is $\geq\bar{r}_{i}$, for all $1\leq i\leq n$. This is expressable by a $\Pi_{1}$-formula.
	\end{enumerate}
	
	Thus, $G_{P}^{\alpha}(c)$ is definable over $L_{\alpha}$ by the conjunction of a $\Sigma_{1}$-formula and a $\Pi_{1}$-formula.
	
	\bigskip
	
	Now assume that $F_{P}^{\alpha^{k}}$ and $G_{P}^{\alpha^{k}}$ have $\Sigma_{4k}$-definitions over $L_{\alpha}$. We show that $F_{P}^{\alpha^{k+1}}$ and $G_{P}^{\alpha^{k+1}}$ have $\Sigma_{4(k+1)}$-definitions over $L_{\alpha}$.
	
	For a arbitrary $P$-configuration $c$ and $\iota<\gamma<\alpha$, let us define $C_{\gamma}^{k}(\iota,c)$ (the sequence of every $\alpha^{k}$th configuration in the $P$-computation starting with $c$ up to time $\alpha^{k}\gamma$) and $D_{\gamma}^{k}(\iota,c)$ (the sequence of component-wise minima of configurations occuring in the $P$-computation starting with $c$ between times $\alpha^{k}\iota$ and $\alpha^{k}(\iota+1)$ up to time $\alpha^{k}\gamma$) by a simultaneous recursion as follows: 
	
	\begin{itemize}
		\item $C_{\gamma}^{k}(0,c)=c$
		\item $C_{\gamma}^{k}(\iota+1,c)=F_{P}^{\alpha^{n}}(C^{k}_{\gamma}(\iota,c))$
		\item $C_{\gamma}^{k}(\delta,c)=\text{liminf}_{\iota<\delta}D_{\gamma}^{k}(\iota,c)$ for $\delta<\gamma$ a limit ordinal.
		\item $D^{k}_{\gamma}(0,c)=c$
		\item $D^{k}_{\gamma}(\iota+1,c)=G_{\gamma}^{n}(C_{\gamma}^{k}(\iota,c))$
		\item $D^{k}_{\gamma}(\delta,c)=\text{min}\{D^{k}_{\gamma}(\iota,c):\iota<\gamma\}$ for $\delta<\gamma$ a limit ordinal.
	\end{itemize}
	
	By recursion (and the inductive assumption about the definability of $F_{P}^{\alpha^{k}}$ and $G_{P}^{\alpha^{k}}$ over $L_{\alpha}$) in $L_{\alpha}$, we have $(C_{\gamma}^{k}(\iota,c):\iota<\gamma)\in L_{\alpha}$ and $(D_{\gamma}^{k}(\iota,c):\iota<\gamma)\in L_{\alpha}$ for all $\gamma<\alpha$.
	
	Now, we can define $F_{P}^{\alpha^{k+1}}$ and $G_{P}^{\alpha^{k+1}}$ over $L_{\alpha}$ as follows:
	
	For $c=(l,r_{1},...,r_{n})$, we have 
	$F_{P}^{\alpha^{k+1}}(c)=\text{liminf}_{\iota<\alpha}D^{k}_{\iota+1}(\iota,c)$ by Proposition \ref{min liminf}. It is not hard to see, using the inductive assumption, that this is $\Sigma_{4k+4}$ over $L_{\alpha}$, as desired.
	
	On the other hand, we have that $G_{P}^{\alpha^{k+1}}(c)=\text{min}\{D_{\iota+1}^{k}(\iota,c):\iota<\alpha\}$: Clearly, the minimal value that the $i$th component assumes until time $\alpha^{k+1}$ is equal to the minimum of the minimal $i$th components occuring in each subinterval of the form $[\alpha^{k}\iota,\alpha^{k}(\iota+1))$.)
	
	In total, $F_{P}^{\alpha^{k+1}}$ and $G_{P}^{\alpha^{k+1}}$ are $\Sigma_{4(k+1)}$ over $L_{\alpha}$, as desired.
	
	\bigskip
	
	Finally, we turn to $H_{P}^{\alpha^{k},\hat{c}}$. 
	
	
	For $n=1$, we have $H_{P}^{\alpha,\hat{c}}=(s_{0},...,s_{n})$ if and only if, for any $i\in\{0,1,...,n\}$ such that $s_{i}=1$, there is $\gamma<\alpha$ such that, 
	after running $P$ on $c$ for $\gamma$ many steps, the $i$th component of the last configuration is equal to the $i$th component of $\hat{c}$ and for any $i\in\{0,1,...,n\}$ with $s_{i}=0$, this is false.
	
	The former condition is $\Sigma_{1}$ over $L_{\alpha}$, the latter is $\Pi_{1}$, so the whole definition is $\Sigma_{2}$.
	
	Now suppose that $H_{P}^{\alpha^{k},\hat{c}}$ is defined. By recursion in $L_{\alpha}$, define, for all $\gamma<\alpha$ and all $\iota<\gamma$:
	
	\begin{itemize}
		\item $E_{\gamma}^{k,\hat{c}}(0,c)=(0,...,0)$
		\item $E_{\gamma}^{k,\hat{c}}(\iota+1,c)=H_{P}^{\alpha^{k},\hat{c}}(C^{k}_{\gamma}(\iota,c))$
		\item $E_{\gamma}^{k,\hat{c}}(\delta,c)=(0,...0)$ for $\delta<\gamma$ a limit ordinal.
	\end{itemize}
	
	The let $H_{P}^{\alpha^{k+1},\hat{c}}(c)=\text{max}\{E_{\gamma+1}^{k,\hat{c}}(\gamma,c):\gamma<\alpha\}$ (where the maximum is also to be taken in each component separately).
	
	Clearly, this is definable over $L_{\alpha}$, as desired.
	
\end{proof}

\begin{lemma}{\label{early computables}}
	Suppose that $L_{\alpha}\models\text{ZF}^{-}$. 
	If $x\subseteq\alpha$ is computable by an $\alpha$-ITRM in time $<\alpha^{n}$ for some $n\in\omega$, then $x\in L_{\alpha+1}$.
\end{lemma}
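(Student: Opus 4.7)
The strategy is to turn a halting computation of bounded length into a limit value of a single definable iteration function, and then invoke Lemma \ref{f g h defbl}. Concretely, let $P$ be the $\alpha$-ITRM-program and $\gamma<\alpha$ the parameter that decides $x$, so that for every $\iota<\alpha$, the computation $P(\iota,\gamma)$ halts in fewer than $\alpha^{n}$ many steps with output $0$ or $1$ (in the first register, say) according to whether $\iota\in x$. I will replace $P$ by a slight modification $P^{\prime}$ whose behavior at time $\alpha^{n}$ can be read off by applying the function $F_{P^{\prime}}^{\alpha^{n}}$ of Lemma \ref{f g h defbl} to the initial configuration, and then define $x$ by a first-order condition over $L_{\alpha}$.

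The modification is as follows: replace the halting instruction of $P$ by a self-loop, i.e. by an unconditional jump from the former halt-line $l_{h}$ to $l_{h}$ itself. This does not change any register content and does not alter the active program line after the former halting time. Since $P(\iota,\gamma)$ reaches configuration $(l_{h},\rho_{\iota},\ldots)$ at some successor stage $\tau_{\iota}<\alpha^{n}$ (with $\rho_{\iota}\in\{0,1\}$ the desired output), the computation $P^{\prime}(\iota,\gamma)$ thereafter stays in the same configuration: at every successor step of $P^{\prime}$ after $\tau_{\iota}$ the line stays $l_{h}$ and the registers are unchanged, and at every limit $\lambda>\tau_{\iota}$ the sequences of lines and register contents on $[\tau_{\iota},\lambda)$ are constant, so the liminf rule preserves them. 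In particular, at time $\alpha^{n}$ the configuration of $P^{\prime}$ started from the initial configuration $c_{\iota}:=(1,\iota,\gamma,0,\ldots,0)$ is exactly $(l_{h},\rho_{\iota},\gamma,\ldots)$.

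By Lemma \ref{f g h defbl}, the function $F_{P^{\prime}}^{\alpha^{n}}$ is $\Sigma_{4n}$-definable over $L_{\alpha}$ (this is where $L_{\alpha}\models\text{ZF}^{-}$ is used). Consequently the set
\[
x=\{\iota<\alpha:F_{P^{\prime}}^{\alpha^{n}}(c_{\iota})\text{ has second component }1\}
\]
is definable over $L_{\alpha}$ with parameter $\gamma$, and hence $x\in L_{\alpha+1}$, as required.

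The only point requiring care is the fact that the self-loop modification really does freeze the configuration through all subsequent limit stages; this is immediate from the liminf rule once one notes that $P^{\prime}$ after $\tau_{\iota}$ performs no operations that change either the line or any register. Otherwise, the argument is just a packaging of the fact that $\alpha^{n}$ is a safe upper bound on the halting time, combined with the definability of the $\alpha^{n}$-step transition function from Lemma \ref{f g h defbl}; no further induction on $n$ is needed beyond the one already carried out there.
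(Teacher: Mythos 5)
Your proposal is correct and follows essentially the same route as the paper: both read off membership in $x$ from the value of the $\alpha^{n}$-step transition function of Lemma \ref{f g h defbl} applied to the initial configuration, using the fact that a halted computation freezes its configuration through all later stages (the paper handles this by the convention, stated in a footnote, that halting configurations simply repeat, whereas you make it explicit via the self-loop modification $P^{\prime}$ — a purely presentational difference).
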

\begin{proof}
	Suppose that $x\subseteq\alpha$ is decidable by an $\alpha$-ITRM-program $P$ and that, for each $\iota<\alpha$, $P(\iota)$ halts in $<\alpha^{n}$ many steps, where $n\in\omega$. 
	
	Now, for all $\iota\in\alpha$, we have $\iota\in x$ if and only if $P(\iota)\downarrow=1$ in $<\alpha^{n}$ many steps if and only if, at time $\alpha^{n}$, the first register contains $1$ in the $P$-computation starting in configuration $(1,\iota,0,...,0)$\footnote{After a halting configuration has been reached, the computation continues by repeating this configuration without changes.} if and only if $F_{P}^{\alpha^{n}}(1,\iota,0,...,0)$ has $1$ in its second component. By Lemma \ref{f g h defbl}, the last condition is $\Sigma_{4n}$ over $L_{\alpha}$. 
	
	In particular, it follows that $x\in L_{\alpha+1}$.
\end{proof}

The proof actually shows more:

\begin{corollary}
	If $x\subseteq\alpha$ is $\alpha$-ITRM-decidable with time bound $\alpha^{k}$, then $x$ is $\Sigma_{4k}$ over $L_{\alpha}$. 
	
	Moreover, if $\alpha$ is $\Sigma_{4k}$-admissible, then any $x\subseteq\alpha$ that is computable by an $\alpha$-ITRM with time bound $\alpha^{k}$ is an element of $L_{\alpha+1}$.
	
	Without any assumption on $\alpha$, we still obtain that, if $x\subseteq\alpha$ is $\alpha$-ITRM-decidable with time bound $\alpha^{k}$, then $x\in L_{\alpha+k+1}$.
\end{corollary}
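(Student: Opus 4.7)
The overall plan is to revisit the proof of Lemma \ref{early computables}, which reduces the question ``is $\iota\in x$?'' to the question ``does $F_{P}^{\alpha^{k}}(1,\iota,0,\ldots,0)$ have second component $1$?''. Any bound on the definability complexity of $F_{P}^{\alpha^{k}}$ over $L_{\alpha}$, or on the $L$-level at which $F_{P}^{\alpha^{k}}$ first appears, then translates directly into a corresponding bound for $x$, with the loss of at most one quantifier or one $L$-step.

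The first assertion is immediate from Lemma \ref{f g h defbl}, which provides $F_{P}^{\alpha^{k}}$ with a $\Sigma_{4k}$-definition over $L_{\alpha}$; composing this with the reduction above exhibits $x$ as a $\Sigma_{4k}$-definable subset of $L_{\alpha}$. For the second assertion, I would re-inspect the proof of Lemma \ref{f g h defbl} and note that its sole use of $L_{\alpha}\models\text{ZF}^{-}$ is to produce, at each inductive stage $j\le k$, the sequences $(C_{\gamma}^{j}(\iota,c):\iota<\gamma)$ and $(D_{\gamma}^{j}(\iota,c):\iota<\gamma)$ as elements of $L_{\alpha}$; this relies only on $\Sigma_{4j}$-replacement inside $L_{\alpha}$. $\Sigma_{4k}$-admissibility supplies this for every $j\le k$, so the same proof goes through and yields a $\Sigma_{4k}$-definition of $x$ over $L_{\alpha}$, and every such definable subset of $L_{\alpha}$ lies in $L_{\alpha+1}$ by construction of the constructible hierarchy.

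For the third assertion I drop the admissibility hypothesis and instead let the recursions climb the $L$-hierarchy. I would prove by induction on $k$ that $F_{P}^{\alpha^{k}},G_{P}^{\alpha^{k}}\in L_{\alpha+k}$. The base case is Lemma \ref{f g h defbl} itself: $F_{P}^{\alpha}$ is definable over $L_{\alpha}$ with no admissibility needed, hence lies in $L_{\alpha+1}$. For the inductive step, once $F_{P}^{\alpha^{k}}\in L_{\alpha+k}$, the recursion rule for $(C_{\gamma}^{k},D_{\gamma}^{k})$ is definable over $L_{\alpha+k}$, and ordinary ordinal recursion places those sequences inside $L_{\alpha+k+1}$; the liminf and componentwise $\min$ formulas for $F_{P}^{\alpha^{k+1}}$ and $G_{P}^{\alpha^{k+1}}$ then keep those functions at the same level. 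Plugging $F_{P}^{\alpha^{k}}\in L_{\alpha+k}$ back into the reduction above exhibits $x$ as $\Delta_{0}$-definable over $L_{\alpha+k}$ with $F_{P}^{\alpha^{k}}$ as a parameter, so $x\in L_{\alpha+k+1}$. The main obstacle is the level-bookkeeping in this last argument: I have to verify that each inductive passage genuinely costs only one extra $L$-step and that neither the liminf nor the componentwise minimum inflates the level further, which ultimately rests on the standard fine-structural fact that recursion along an ordinal with a rule defined over $L_{\beta}$ can be executed inside $L_{\beta+1}$.
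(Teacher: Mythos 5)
Your proposal matches the paper's proof essentially step for step: the first two claims are read off from Lemma \ref{f g h defbl} (noting that its only appeal to $L_{\alpha}\models\mathrm{ZF}^{-}$ is the replacement needed to form the sequences $C_{\gamma}^{k}$, $D_{\gamma}^{k}$ inside $L_{\alpha}$), and the third is obtained by the same induction showing that these sequences, and hence $F_{P}^{\alpha^{k}}$ and $G_{P}^{\alpha^{k}}$, climb only one $L$-level per increment of the exponent, so that $x\in L_{\alpha+k+1}$. The only divergence is a harmless bookkeeping shift (you place $C_{\gamma}^{k},D_{\gamma}^{k}$ in $L_{\alpha+k+1}$ where the paper puts them in $L_{\alpha+k}$, compensating via uniform definability over $L_{\alpha+k}$), which leaves the final bound unchanged.
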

\begin{proof}
	The first two claims are clear. For the third, note that, inductively, $C^{k}_{\gamma}$ and $D_{k}^{\gamma}$ will be contained in $L_{\alpha+k}$, so that $F_{P}^{\alpha^{k+1}}$ and $G_{P}^{\alpha^{k+1}}$ are definable over $L_{\alpha+k}$ and thus contained in $L_{\alpha+k+1}$, so that now any $x\subseteq\alpha$ that is $\alpha$-ITRM-computable with time bound $\alpha^{k+1}$ is contained in $L_{\alpha+k+2}$. Thus, an induction on $k$ proves the desired result.
\end{proof}

\begin{lemma}{\label{koepke seyfferth}} [Koepke and Seyffert, see \cite{KS}]
	Let $\alpha$ be exponentially closed. Then $x\subseteq\alpha$ is $\alpha$-ITRM-computable in $\alpha$ many steps if and only if $x$ is $\Delta_{1}$ over $L_{\alpha}$.
	
	In particular, there is an $\alpha$-ITRM-computable code $c\subseteq\alpha$ for $L_{\alpha}$ in which each ordinal $\iota$ is coded by $\iota+1$.
	
	Moreover, the truth predicate for bounded formulas in $L_{\alpha}$ with parameters is $\alpha$-ITRM-decidable.
\end{lemma}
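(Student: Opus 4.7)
The plan is to prove both directions of the equivalence and then extract the ``in particular'' statements as direct corollaries of the construction used in the reverse direction. Throughout, exponential closure of $\alpha$ is used in its two standard guises: Cantor pairing $p:\alpha^{2}\to\alpha$ is a bijection, and $\alpha$ is closed under the operations needed to code finite sequences from $\alpha$ as elements of $\alpha$.

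For the forward direction, suppose $P$ is an $\alpha$-ITRM-program (possibly with a parameter) deciding $x$, and that every $P(\iota)$ for $\iota<\alpha$ halts in fewer than $\alpha$ many steps. A complete $P$-computation of length $\delta<\alpha$ is a sequence of $P$-configurations in $\omega\times\alpha^{n}$ indexed by $\delta+1$; since $\alpha$ is a limit ordinal such a sequence is an element of $L_{\delta+\omega}\subseteq L_{\alpha}$. Hence the statement ``$\iota\in x$'' is equivalent to ``there exists a halting $P$-computation on input $\iota$ whose final configuration outputs $1$'', which is $\Sigma_{1}$ over $L_{\alpha}$; replacing the output $1$ by $0$ gives a $\Sigma_{1}$-definition of the complement, so $x$ is $\Delta_{1}$ over $L_{\alpha}$.

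For the reverse direction, I would build an $\alpha$-ITRM that, on input $\iota$ and with access to the $\Sigma_{1}$- and $\Pi_{1}$-definitions of $x$ as parameters, constructs an $\alpha$-code for $L_{\alpha}$ level-by-level and evaluates the two definitions on this code in parallel, halting as soon as one of them confirms the answer. The construction proceeds by iterating over $\beta<\alpha$: starting from the canonical code of $L_{\omega}$, at successor stages one extends the code for $L_{\beta}$ to a code for $L_{\beta+1}$ by enumerating the finitely many Gödel-numbers of formulas and, for each formula, looping through the already-coded parameters from $L_{\beta}$ and adding codes for the newly-defined sets; Cantor pairing is used to pack the needed auxiliary data into a single register content $<\alpha$. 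At limit stages $\delta<\alpha$ the bounded truth predicate for $L_{\delta}$ on the already-produced code is recoverable from the liminf of suitably monotonically designed bookkeeping registers, so the code of $L_{\delta}$ can be reconstructed after the liminf reset by recomputing the successor steps once more from the stabilised index $\delta$. To decide $\iota\in x$, the machine simultaneously searches for a witness to the $\Sigma_{1}$-definition of $\iota\in x$ and one for the $\Sigma_{1}$-definition of $\iota\notin x$ on the growing code; exactly one of these eventually succeeds, and the machine outputs the corresponding value. The ``in particular'' statements are then immediate: enumerate $L_{\alpha}$ so that the ordinal $\iota$ receives code $\iota+1$ (reserving the first $\alpha$ indices for ordinals in their natural order), read off the resulting subset of $\alpha$ as the code $c$; and decide the bounded truth predicate by locating, for the named parameters, a level $L_{\beta}$ containing them and evaluating the bounded formula recursively on the code via its definable $\in$-relation, which, by absoluteness of bounded formulas, yields the value in $L_{\alpha}$.

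The main obstacle will be the careful handling of the liminf resetting rule during the transfinite construction of the code: since each register stores only a single ordinal $<\alpha$, information surviving a limit stage has to be encoded into register values whose liminf is the intended post-limit content. This is exactly the kind of bookkeeping that justifies the modified jump conditions introduced in the preliminaries, as they allow recognising a previously stored ``checkpoint'' configuration in a single step after the reset. The proof of the backward direction therefore breaks into (i) an explicit description of the code-production algorithm at successor stages, (ii) a verification that all quantities involved remain $<\alpha$ (using exponential closure), and (iii) a liminf-analysis at limit stages showing that the invariants needed to resume the construction are recoverable from the stabilised register contents.
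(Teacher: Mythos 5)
The paper does not actually prove this lemma: it cites Koepke--Seyfferth \cite{KS} for the equivalence and the bounded truth predicate, and only remarks that the canonical code is $\Delta_{1}$. You are therefore attempting to reprove the cited result, which is fine, and your forward direction is essentially correct: a halting computation of length $\delta<\alpha$ is an element of $L_{\alpha}$ (the bound $L_{\delta+\omega}$ should really involve the register contents as well, but since everything is $<\alpha$ and $\alpha$ is a limit, the conclusion stands), so ``$\iota\in x$'' and ``$\iota\notin x$'' are both $\Sigma_{1}$ over $L_{\alpha}$.

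The reverse direction, however, has a genuine gap exactly where you flag ``the main obstacle.'' First, a register machine cannot ``construct a code for $L_{\alpha}$ level-by-level'' in any literal sense: it has finitely many registers each holding one ordinal, so it cannot store the code of $L_{\beta}$ and extend it; what must be shown is that \emph{membership} in the canonical code is decidable on demand, via a recursive evaluation of bounded truth for names $(\beta,k,\gamma)$. Second, and more seriously, the entire difficulty of that recursive evaluation is the survival of the nested-search bookkeeping across limit times under the liminf rule. Packing a stack of search indices into a single register via Cantor pairing only works because $\liminf_{\iota<\delta}p(\gamma,\beta_{\iota})=p(\gamma,\liminf_{\iota<\delta}\beta_{\iota})$ \emph{when $\gamma$ exceeds the liminf of the varying component} (the paper's Lemma \ref{limit preservation}); without arranging the stack so that the rapidly varying entries are dominated by the stable ones, one gets e.g. $\liminf_{n<\omega}p(0,n)=\omega\neq p(0,\omega)$ and all information is lost at the first limit. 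Your proposal asserts that the invariants are ``recoverable from the stabilised register contents'' but gives no argument, and this is precisely the non-trivial content of the Koepke--Seyfferth theorem (and the place where exponential closure of $\alpha$ is really used, to keep the packed stack contents below $\alpha$). As written, the backward direction is a restatement of what needs to be proved rather than a proof of it.
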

\begin{proof}
	For the first and last claim, see Koepke and Seyfferth \cite{KS}.\footnote{Strictly speaking, \cite{KS} proves this for $\alpha$-ITTMs, but the adaptation to $\alpha$-ITRMs is straightforward, see \cite{C}, Theorem 3.3.3.} For the second claim, it is easy to see that such a code is $\Delta_{1}$ over $L_{\alpha}$.
\end{proof}

\begin{lemma}{\label{succ level comp}}
	Let $\alpha$ be exponentially closed, $x\in L_{\alpha+1}\cap\mathfrak{P}(\alpha)$. Then $x$ is $\alpha$-ITRM-computable.
\end{lemma}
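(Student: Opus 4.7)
The plan is to reduce the membership question $\iota\in x$ to evaluating a first-order formula over $L_{\alpha}$ on an $\alpha$-ITRM, using the computable code for $L_{\alpha}$ from Lemma \ref{koepke seyfferth}. Since $x\in L_{\alpha+1}\cap\mathfrak{P}(\alpha)$, there are a formula $\phi(v,w)$ and a parameter $a\in L_{\alpha}$ with $\iota\in x\iff L_{\alpha}\models\phi(\iota,a)$. Let $f:\alpha\to L_{\alpha}$ be the surjection underlying the code $c$, and let $\gamma_{a}<\alpha$ be such that $f(\gamma_{a})=a$; this $\gamma_{a}$ serves as the ITRM-parameter. Since the bounded truth predicate on $c$ is $\alpha$-ITRM-decidable by Lemma \ref{koepke seyfferth}, bounded formulas can be evaluated as a subroutine $T$ that takes codes of parameters and outputs the truth value.

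The core step is an induction on the quantifier complexity of $\phi$, converting each unbounded quantifier into an unbounded loop over $\alpha$ implemented via the reset rule. For $\exists y\,\psi(y,\bar{v})$, assuming inductively a subroutine $P_{\psi}$ that from a code $\delta<\alpha$ decides $L_{\alpha}\models\psi(f(\delta),\bar{a})$, I would program the following loop: initialize a counter $R_{\gamma}\leftarrow 0$ and a ``second-pass'' flag $R_{S}\leftarrow 0$; at the loop head, first test whether $R_{S}=1\wedge R_{\gamma}=0$ and in that case halt with output $0$; otherwise invoke $P_{\psi}$ on $R_{\gamma}$, halting with output $1$ if $P_{\psi}$ returns $1$; otherwise set $R_{S}\leftarrow 1$, increment $R_{\gamma}$, and jump back to the head. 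If a witness $\gamma_{0}$ exists, the machine halts with $1$ after roughly $\gamma_{0}$ iterations of $P_{\psi}$; if none exists, $R_{\gamma}$ is increased cofinally in $\alpha$ and at the first limit where its liminf reaches $\alpha$ it resets to $0$, while $R_{S}$ stays at $1$ (its eventual value), and the liminf of the active line is the loop head, so the exhaustion check fires and halts the machine with $0$. Universal quantifiers are handled dually, or by $\neg\exists\neg$.

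Nested quantifiers are handled by assigning disjoint counter/flag pairs to each level and carrying $\gamma_{a}$ in a dedicated register that is never written, so that its liminf remains $\gamma_{a}$ at every limit stage. The total running time is bounded by some $\alpha^{k}$ with $k$ depending on the quantifier complexity of $\phi$, which is fine for an $\alpha$-ITRM. The main obstacle I expect is a careful verification of the limit behaviour of the exhaustion test: one must check that at the ``completion limit'' the counter really does reset (i.e.\ its values were cofinal in $\alpha$), and that no other register's liminf or the liminf of the active program line triggers a spurious match. The slightly enhanced jump conditions permitted by the revised program syntax (noted in the introduction) allow the entire exhaustion check $R_{S}=1\wedge R_{\gamma}=0$ to be performed in a single line, which is exactly what makes the liminf of active lines land on the intended test line rather than on some interior line of the loop body.
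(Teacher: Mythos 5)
Your proposal is correct and follows essentially the same route as the paper: obtain the $\Delta_{1}$ code for $L_{\alpha}$ from Lemma \ref{koepke seyfferth}, evaluate the quantifier-free matrix via the bounded truth predicate, and peel off unbounded quantifiers by induction, implementing each as an exhaustive search through $\alpha$ in a fresh register whose overflow (reset to $0$) signals completion. Your explicit second-pass flag $R_{S}$ and the remark about the enhanced jump conditions are just a more careful spelling-out of the exhaustion test that the paper leaves implicit.
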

\begin{proof}
	By Lemma \ref{koepke seyfferth}, a subset $c\subseteq\alpha$ coding $L_{\alpha}$ is $\alpha$-ITRM-computable as it is easy to see that there is such a code which is $\Delta_1$ over $L_{\alpha}$: More precisely, associate with every $\delta\alpha$ a triple $(\beta,k,\gamma)\in\alpha\times\omega\times\alpha$, which is meant to represent $\text{Def}(\beta,k,\gamma):=\{x\in L_{\beta}:L_{\beta}\models\phi_{k}(x,\gamma)\}$. (In particular then, $\iota<\alpha$ is represented by $(\iota,k,\emptyset)$, where $k$ is an index for the formula ``$x$ is an ordinal''. Now define a code $c\subseteq\alpha$ by saying that $p(\iota,\xi)\in c$ if and only if there are $t_{0}:=(\beta_{0},k_{0},\gamma_{0})$, 
	$t_{1}:=(\beta_{1},k_{1},\gamma_{1})$ in $\alpha\times\omega\times\alpha$ such that $\iota$ codes $t_{0}$, $\xi$ codes $t_{1}$ and $\text{Def}(t_{0})\in\text{Def}(t_{1})$ if and only if we have $\text{Def}(t_{0})\in\text{Def}(t_{1})$ for all such $t_{0}$, $t_{1}$. This definition is clearly $\Delta_{1}$ over $L_{\alpha}$. 
	
	Now suppose that $x\in L_{\alpha+1}$ is given as 
	$$x=\{\iota<\alpha:L_{\alpha}\models\phi(\iota,\vec{p})\}\text{,}$$
	where $\vec{p}$ is a finite sequence in $\alpha$. We show by induction on the complexity of $\phi$ that $x$ is $\alpha$-ITRM-decidable.
	
	Suppose that $\phi$ is written in the form $\exists{x_{1}}\forall{x_{2}}...\forall{x_{n}}\psi$, where $\psi$ is quantifier-free.
	
	To evaluate $\psi$, one only needs to use the algorithm for evaluating the bounded truth predicate from Lemma \ref{succ level comp}.
	
	Then, for each quantifier alternation, we perform an exhaustive search through $\alpha$. For $n$ quantifier alternations, $n$ extra registers are used for the nested searches.
	
	More specifically, if $Q$ decides $\{(\iota,\xi)\in\alpha\times\alpha:\phi(\iota,\xi)\}$ for some formula $\phi$, then $\{\iota<\alpha:\exists{\xi}\phi(\iota,\xi)\}$ is decided by running through $\alpha$ in a new register and using $Q$ on each content of that register to decide whether $\phi(\iota,\xi)$ holds. If this terminates (i.e., if the new register contains $0$, due to an overflow) without $Q$ ever having returned the output $1$, then $\exists{\xi}\phi(\iota,\xi)$ is false, otherwise, it is true.
	
	The set $\{\iota<\alpha:\forall{\xi}\phi(\iota\xi)\}$ is just the relative complement of the 
	set $\{\iota<\alpha\exists{\xi}\neg\phi(\iota,\xi)\}$ in $\alpha$ and can thus be decided similarly.
\end{proof}

We now work towards a bound on the halting times on $\alpha$-ITRM-programs when $\alpha$ is a ZF$^{-}$-ordinal. Our approach is an adaptation of the proof by Koepke in \cite{K1} that the halting times of $\omega$-ITRMs are bounded by $\omega_{\omega}^{\text{CK}}$ and strengthens our result from \cite{C} that the halting times of $\kappa$-ITRMs are bounded by $\kappa^{\omega}$ when $\kappa$ is an uncountable regular cardinal.

The following lemma generalizes the looping criterion for ITRMs from \cite{KM}.

\begin{lemma}{\label{looping criterion}}
	Let $P$ be an $\alpha$-ITRM-program. Suppose that, during the computation of $P$, there are times $\iota<\xi$ such that the configurations at time $\iota$ and $\xi$ are equal and such that any configuration arising in between is in every component $\geq$ the configuration at time $\iota$. Then $P$ is looping, repeating its behaviour between times $\iota$ and $\xi$ and in particular never halts.
\end{lemma}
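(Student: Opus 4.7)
The plan is to use transfinite induction to verify that the entire future of the computation beyond time $\xi$ is an iterated copy of the block on $[\iota,\xi]$. Let $\eta>0$ denote the unique ordinal with $\iota+\eta=\xi$. I would prove the following statement by transfinite induction on $\mu$: for every $\gamma\leq\eta$, we have $c_{\iota+\eta\cdot\mu+\gamma}=c_{\iota+\gamma}$, and every configuration occurring up to time $\iota+\eta\cdot\mu+\gamma$ is componentwise $\geq c_\iota$. Letting $\mu$ be arbitrary then shows that the behavior between times $\iota$ and $\xi$ repeats indefinitely, and in particular $P$ never halts, since a halting configuration would already have occurred in the original block $[\iota,\xi]$ (which is impossible as the computation is still defined at time $\xi$).

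For the successor step $\mu\mapsto\mu+1$, I would run an inner transfinite induction on $\gamma\leq\eta$. The case $\gamma=0$ combines the outer inductive hypothesis at $\eta$ with the assumption $c_\iota=c_\xi$. The successor case $\gamma\mapsto\gamma+1$ is immediate from the fact that the one-step transition of an $\alpha$-ITRM is a deterministic function of the current configuration, so equal configurations produce equal successors. The limit case is the delicate one: here I would invoke the componentwise liminf rule. By the inner inductive hypothesis, the two sequences $(c_{\iota+\eta\cdot\mu+\sigma})_{\sigma<\gamma}$ and $(c_{\iota+\sigma})_{\sigma<\gamma}$ agree term by term, so their inferior limits agree; the componentwise $\geq c_\iota$ property of configurations in the first block (hypothesis) ensures that the liminfs are themselves $\geq c_\iota$ componentwise, hence $<\alpha$ in every coordinate, so no register reset is triggered in either block.

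For the limit step of the outer induction, the critical point is computing $c_{\iota+\eta\cdot\mu}$ when $\mu$ is a limit. By the outer inductive hypothesis, the sequence $(c_{\iota+\rho})_{\rho<\eta\cdot\mu}$ is a concatenation of $\mu$ copies of the block $(c_{\iota+\sigma})_{\sigma<\eta}$, every entry of which is componentwise $\geq c_\iota$, while $c_\iota$ itself is attained at the beginning of each copy and hence cofinally often below $\iota+\eta\cdot\mu$. Therefore the componentwise liminf is exactly $c_\iota$, giving $c_{\iota+\eta\cdot\mu}=c_\iota$; once this anchor is secured, the remainder of the block for $\gamma\leq\eta$ follows exactly as in the successor step above.

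The main obstacle is precisely the handling of limit stages in the second and subsequent iterations: one must ensure that the liminf rule never produces the value $\alpha$ in any coordinate, since such an overflow would trigger a reset to $0$ and destroy the periodicity. The componentwise dominance hypothesis $c_\tau\geq c_\iota$ on the original block is exactly the ingredient that makes this controllable, because it propagates via the induction to every copy of the block and so provides a uniform positive lower bound on the values entering every liminf that appears in the argument.
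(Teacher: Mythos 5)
Your proof is correct and is essentially a fully detailed version of the paper's own (one-line) argument, which simply invokes the liminf rule to conclude that the configuration $c_\iota$ reappears at every time of the form $\xi+\eta\cdot\gamma$. One small remark: the inference ``the liminfs are $\geq c_\iota$ componentwise, hence $<\alpha$ in every coordinate'' is a non sequitur (a lower bound does not rule out an overflow at a limit), but it is also unnecessary, since term-by-term equality of the two sequences entering each liminf already forces the resulting limit configurations --- resets included --- to agree.
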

\begin{proof}
	Let $\delta$ be such that $\iota+\delta=\xi$.
	By the liminf-rule, the configuration at time $\iota$ reappears at any time of the form $\xi+\delta\gamma$.
\end{proof}

\begin{defini}
	In the situation of Lemma \ref{looping criterion}, we say that $(\iota,\xi)$ witnesses the looping of $P$.
\end{defini}

A new phenomenon occuring for $\alpha$-ITRMs with $\alpha>\omega$, but not for ITRMs is the possibility that a register content $>0$ occurs at a limit time without ever having been contained in that register before; for example, if one counts upwards in a register, starting with $0$, then at time $\omega$, this register will contain $\omega$ for the first time. This kind of limits complicates the control over the register contents that we need to ensure looping. Fortunately, for reasonable closed $\beta$, we can show that it cannot occur at time $\beta$.

\begin{defini}
	Let $P$ be an $\alpha$-(w)ITRM-program, $\iota,\delta<\alpha$, $\delta>0$ and $\tau$ a limit ordinal. We say that $\delta$ is a proper limit of $(P,\iota)$ at time $\tau$ if and only if some register contains $\delta$ at time $\tau$ in the computation of $P$ in the input $\iota$, but that register had contents $<\delta$ cofinally often before time $\tau$.
\end{defini}

\begin{lemma}{\label{no bad limits}}
	Let $\alpha$ be a ZF$^{-}$-ordinal, let $P$ be an $\alpha$-ITRM-program, $k\in\omega$, and let $R$ be a register used by $P$ and let $r>0$ be its content at time $\alpha^{k}$. Then there is $\tau<\alpha^{k}$ such that all contents of $R$ after time $\tau$ were $\geq r$ and moreover, $r$ was cofinally often the content of $R$ before time $\alpha^{k}$.
\end{lemma}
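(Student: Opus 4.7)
The plan is to combine the liminf rule governing the content of $R$ at the limit time $\alpha^{k}$ with the regularity of $\alpha$ in $L_{\alpha+1}$ supplied by Lemma \ref{zf- char}. I may assume $k \geq 1$ (the case $k = 0$ is vacuous), so that $\alpha^{k}$ is a limit ordinal and, by the liminf rule, the content $r > 0$ of $R$ at time $\alpha^{k}$ equals $\liminf_{t < \alpha^{k}} \rho(t)$, where $\rho(t)$ denotes the content of $R$ at time $t$ in the $P$-computation on the given input. Since $\alpha$ is exponentially closed, $\alpha^{k} < \alpha$, and the $P$-computation up to time $\alpha^{k}$, and in particular $\rho \restriction \alpha^{k}$, can be built as an element of $L_{\alpha}$ by a transfinite recursion of length $\alpha^{k}$, using the replacement axiom available because $L_{\alpha} \models \text{ZF}^{-}$.

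If $r = s + 1$ is a successor, the definition of liminf immediately yields $\tau < \alpha^{k}$ with $\rho(t) \geq s + 1 = r$ on $(\tau, \alpha^{k})$. The substantive case is $r$ a limit ordinal. For each $r' < r$, set $\tau_{r'} := \sup\{t < \alpha^{k} : \rho(t) \leq r'\}$; since $r' < r = \liminf$, this set is bounded in $\alpha^{k}$, so $\tau_{r'} < \alpha^{k}$. The map $r' \mapsto \tau_{r'}$ is definable from $\rho \restriction \alpha^{k}$ and so lies in $L_{\alpha+1}$. If $\sup_{r' < r} \tau_{r'}$ were equal to $\alpha^{k}$, this map would be cofinal in $\alpha^{k}$; composing with the canonical $\alpha$-indexed cofinal sequence $\iota \mapsto \alpha^{k-1} \cdot \iota$ in $\alpha^{k}$ then yields a cofinal map $r \to \alpha$ in $L_{\alpha+1}$ with $r < \alpha$, contradicting the regularity of $\alpha$ in $L_{\alpha+1}$. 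Therefore $\tau := \sup_{r' < r} \tau_{r'} < \alpha^{k}$, and $\rho(t) \geq r$ for all $t \in (\tau, \alpha^{k})$, giving the first clause.

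For the cofinality clause, suppose $r$ were not attained cofinally often before $\alpha^{k}$; then some $\tau' \in (\tau, \alpha^{k})$ would satisfy $\rho(t) > r$ throughout $(\tau', \alpha^{k})$, forcing $\liminf \geq r + 1 > r$, contradicting $\liminf = r$. The main obstacle is the limit case of $r$: realizing the map $r' \mapsto \tau_{r'}$ as an element of $L_{\alpha+1}$ and then invoking regularity is precisely the step where the ZF$^{-}$-hypothesis is essential, via its role in making the entire $\alpha^{k}$-long computation sequence available as a set in $L_{\alpha}$.
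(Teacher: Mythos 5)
Your overall strategy (use the liminf rule to reduce to the first clause, then derive a cofinal map from $r<\alpha$ into $\alpha$ lying in $L_{\alpha+1}$ and contradict Lemma \ref{zf- char}) is exactly the paper's strategy, and your treatment of the cofinality clause and of the successor case of $r$ is fine. But there is a fatal error at the foundation: you assert that exponential closure gives $\alpha^{k}<\alpha$, so that the computation up to time $\alpha^{k}$ is an element of $L_{\alpha}$. This is false. Exponential closure means $\alpha$ is closed under exponentiation of ordinals \emph{below} $\alpha$; the time bound $\alpha^{k}$ has base $\alpha$ itself, and $\alpha^{k}\geq\alpha$ for $k\geq 1$ (strictly greater for $k\geq 2$). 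Indeed, the whole surrounding development (Theorem \ref{halting time bound}, Corollary \ref{clockable supremum}) is about halting times up to $\alpha^{\omega}$, which would be vacuous if $\alpha^{k}<\alpha$. Consequently the sequence $\rho\restriction\alpha^{k}$ is not a set in $L_{\alpha}$ (for $k=1$ it is merely a class over $L_{\alpha}$, and for $k\geq 2$ even its index set exceeds $\alpha$), so your assertion that $r'\mapsto\tau_{r'}$ is ``definable from $\rho\restriction\alpha^{k}$ and so lies in $L_{\alpha+1}$'' is unjustified. This is precisely the crux of the lemma.

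The paper closes this gap with Lemma \ref{f g h defbl}: the functions $G_{P}^{\alpha^{k}}$ and the block-minima sequences $(D^{k}_{\gamma}(\iota,c):\iota<\gamma)$, which record the componentwise minima of the configurations over each interval $[\alpha^{k}\iota,\alpha^{k}(\iota+1))$, are shown by induction on $k$ to be definable over $L_{\alpha}$. The analogue of your $\tau_{r'}$ is then defined not from the raw $\alpha^{k}$-indexed history but as the least $\iota<\alpha$ such that $D^{k-1}_{\xi+1}(\beta,c)$ has its relevant component $\geq r'$ for all $\xi\geq\beta$ --- a definition indexed by ordinals below $\alpha$ and expressible over $L_{\alpha}$, after which your regularity contradiction goes through. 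To repair your proof you would need to either invoke that lemma or reprove its content; as written, the key definability step is missing.
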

\begin{proof}
	By the liminf rule, the second claim follows from the first. It thus suffices to show the first claim.
	
	Suppose for a contradiction that the first claim fails. Note that, as a register content of an $\alpha$-ITRM, we have $r<\alpha$. Now we have that, for any $\rho<r$, there is a minimal ordinal $\iota(\rho)<\alpha$ such that, from time $\alpha^{k-1}\iota(\rho)$ on, all contents of $R$ were $\geq\rho$ (but cofinally often, it was $<r$). Consider the function $\rho\mapsto\iota(\rho)$, which maps $r<\alpha$ cofinally into $\alpha$.
	
	We claim that this function is definable over $L_{\alpha}$, hence contained in $L_{\alpha+1}$, contradicting the assumption that $\alpha$ is regular in $L_{\alpha+1}$. To this end, we recall from the proof of Lemma \ref*{f g h defbl} that $(D_{\iota}^{k}:\iota<\gamma)$ is definable over $L_{\alpha}$ for every $\gamma<\alpha$.
	
	Now $\beta\geq\iota(\rho)$ holds if and only if, for all $\xi\in[\beta,\alpha)$, we have that $D^{k}_{\xi+1}(\beta,c)$ has an ordinal $\geq\rho$ in its $(i+1)$st component.
	
	Clearly, this is expressable by some $\in$-formula over $L_{\alpha}$ since $D^{k}_{\xi+1}(\beta,c)$ is so expressable uniformly in $\zeta$, $\beta$ and $c$ (where $c$ is the initial configuration). Hence, the minimal such ordinal is also definable over $L_{\alpha}$ and thus, so is the function $\iota\mapsto\iota(\rho)$.
\end{proof}

\begin{thm}{\label{halting time bound}}
	Let $\alpha$ be a ZF$^{-}$-ordinal. Then an $\alpha$-ITRM-program using $n$ registers halts in $<\alpha^{n+1}$ many steps or not at all.
\end{thm}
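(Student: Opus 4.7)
The plan is to argue by contradiction, invoking the looping criterion of Lemma \ref{looping criterion}, and to proceed by induction on the number of registers $n$. Suppose $P$ uses $n$ registers and has not halted by time $\alpha^{n+1}$; I want to produce a pair $(\iota,\xi)$ with $\iota<\xi<\alpha^{n+1}$ witnessing the looping of $P$, which then contradicts the assumption that $P$ could halt later.

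First I would consider the configuration $c^{\ast}=(l^{\ast},r_{1}^{\ast},\dots,r_{n}^{\ast})$ at time $\alpha^{n+1}$, obtained from the liminf rule. Lemma \ref{no bad limits} applied with $k=n+1$ to each register $R_{i}$ supplies a time $\tau_{i}<\alpha^{n+1}$ after which $R_{i}\geq r_{i}^{\ast}$ and such that $r_{i}^{\ast}$ is attained cofinally often in $\alpha^{n+1}$. Since the program line takes values in $\omega$, the analogous statement holds for $l^{\ast}$ for free. Setting $\tau=\max\{\tau_{0},\tau_{1},\dots,\tau_{n}\}<\alpha^{n+1}$, the configuration from time $\tau$ on is componentwise $\geq c^{\ast}$. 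This already supplies one half of the looping criterion (the componentwise lower bound on intermediate configurations); it remains to find two times in $(\tau,\alpha^{n+1})$ at which the configuration is exactly $c^{\ast}$.

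The heart of the argument is to show that, after $\tau$, the configuration $c^{\ast}$ recurs at least twice. Here I would use the inductive hypothesis to ``peel off'' one register. The point is that, on the cofinal set of times in $(\tau,\alpha^{n+1})$ where, say, register $R_{n}$ holds exactly $r_{n}^{\ast}$, the sub-computation of $P$ behaves like an $\alpha$-ITRM with $n-1$ effectively varying registers; any such sub-computation that fails to exhibit a recurrence of the full configuration within $\alpha^{n}$ further steps would, after a translation, contradict the induction hypothesis applied to an auxiliary $(n-1)$-register program that simulates $P$ with $R_{n}$ artificially pinned to $r_{n}^{\ast}$. Combined with the fact that the register $R_{n}$ itself returns to $r_{n}^{\ast}$ cofinally often, this yields two distinct times $\iota<\xi$ in $(\tau,\alpha^{n+1})$ with $c_{\iota}=c_{\xi}=c^{\ast}$, and by construction all configurations in $[\iota,\xi]$ are componentwise $\geq c^{\ast}$. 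An application of Lemma \ref{looping criterion} finishes the proof.

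The main obstacle will be the simultaneous-attainment step: the cofinal attainment of each $r_{i}^{\ast}$ separately does not directly yield times at which all components of $c^{\ast}$ are attained at once. Implementing the ``peel off'' reduction so that the induction hypothesis legitimately applies to a program with strictly fewer registers (without the artificial pinning of $R_{n}$ to $r_{n}^{\ast}$ breaking the semantics of the liminf rule or the jumping conditions) is the delicate point. The ZF$^{-}$ hypothesis enters through Lemma \ref{no bad limits}, which in turn is what makes the componentwise lower bound on the configuration available on a cofinal initial segment below each $\alpha^{k}$.
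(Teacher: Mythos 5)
Your skeleton (contradiction via Lemma \ref{looping criterion}, induction on the number of registers, Lemma \ref{no bad limits} to get a $\tau$ after which the configuration is componentwise $\geq c^{\ast}$) matches the paper, but the two steps you flag as delicate are exactly where the proposal breaks down, and the paper resolves them quite differently. First, the ``peel off by pinning $R_{n}$ to $r_{n}^{\ast}$'' reduction is not a legitimate appeal to the induction hypothesis: between consecutive times at which $R_{n}=r_{n}^{\ast}$ the computation genuinely uses the varying content of $R_{n}$ (in jump conditions and in the liminf rule), so there is no $(n-1)$-register $\alpha$-ITRM program whose run tracks the subsequence of configurations you want. The paper never looks for a single earlier time at which all components of $c^{\ast}$ are attained simultaneously. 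Instead it builds an $\omega$-sequence $(\alpha_{k})$ of times at which the components of $c^{\ast}$ are matched \emph{cyclically, one at a time}, shows this sequence (or its index sequence, in the inductive step) is definable over $L_{\alpha}$ and hence bounded below $\alpha$ by regularity of $\alpha$ in $L_{\alpha+1}$, and then lets the liminf rule do the work: at the supremum $\eta$ of the sequence, each component has been matched cofinally often and never dropped below, so the configuration \emph{at} $\eta$ equals $c^{\ast}$, and $(\eta,\alpha^{n+1})$ witnesses the loop. This is also a second, essential, entry point of the ZF$^{-}$ hypothesis that your plan omits: without the boundedness of the interleaved sequence, $\eta$ could equal $\alpha$ (resp. $\alpha^{n+1}$) and nothing is gained.

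Second, your application of Lemma \ref{no bad limits} silently assumes every $r_{i}^{\ast}$ is attained cofinally often before $\alpha^{n+1}$, but that lemma is stated only for $r>0$: if a register overflows at time $\alpha^{n+1}$, its content there is $0$ and was \emph{not} attained cofinally often before (the register was nonzero from some point on and was reset), so $c^{\ast}$ need not recur at all below $\alpha^{n+1}$. The paper's induction is organized precisely around this: the inductive claim counts how many registers contain $0$ at time $\alpha^{k+1}$, and each overflow trades one register for one extra power of $\alpha$ (by restarting the argument from the configuration at time $\tau$ and looking at time $\tau+\alpha^{n}$, where one fewer register is zero). Since there are only $n$ registers, this can happen at most $n$ times, which is where the exponent $n+1$ comes from. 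Without handling overflows, your argument gives no reason for the bound to depend on $n$ at all.
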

\begin{proof}
	We follow the argument by Koepke from \cite{K1}.
	
	We actually show that, if an $\alpha$-ITRM-computation with $L_{\alpha}\models$ZF$^{-}$ reaches time $\alpha^{n+1}$, then at least $n$ registers must contain $0$ or there are $\iota,\xi<\alpha^{n+1}$ witnessing the looping of $P$. Since there cannot be more registers containing $0$ than there are registers in total, this proves the claim.
	
	By Lemma \ref{no bad limits}, we know that, if an $\alpha$-ITRM-computations reaches time $\alpha^{k}$ for some $1\leq k\in\omega$, then there is $\tau<\alpha^{k}$ (namely, the maximum of the values guaranteed to exist by that lemma for each of the finitely many registers) such that, from time $\tau$ on up to time $\alpha^{k}$, no register content dropped below its value at time $\alpha^{k}$ (this holds trivially when that content is $0$). By increasing $\tau$ if necessary, we can assume without loss of generality that the same holds for the active program line.  Let $c$ be the configuration of $P$ at time $\alpha^{k}$.
	
	Now let $n=1$ and pick $\tau$ as just described. We build an increasing sequence of ordinals $(\alpha_{k}:k\in\omega)$ such that $\alpha_{0}=\tau$ and, for all $k\in\omega$, $\alpha_{k}>\text{max}\{\alpha_{0},...,\alpha_{k-1}\}$ is minimal such that the $(k$ mod $(n+1))$th component of the configuration at time $\alpha_{k}$ agrees with that of $c$. (Such a sequence exists since, by assumption, none of the register contents at time $\alpha$ is due to an overflow.) 	
	Clearly, this sequence is definable over $L_{\alpha}$ as a map from $\omega$ into $\alpha$ and is thus not cofinal. 
	
	Let $\eta=\text{sup}\{\alpha_{k}:k\in\omega\}$. Then $\tau<\eta<\alpha$ and the $P$-configurations at time $\eta$ is equal to $c$ by the liminf-rule.
	
	But then, $(\eta,\alpha)$ witnesses the looping of $P$ and thus, $P$ does not halt.\footnote{This argument actually shows that the halting times of $\alpha$-wITRMs are bounded by $\alpha$ when $\alpha$ is $\Sigma_{2}$-admissible. See \cite{C} and the remark in the introduction.}

	\bigskip

	Let us now assume that the theorem holds for $n$. Suppose the computation arrives at time $\alpha^{n+1}$ and less than $n$ many registers contain $0$ at that time. Again, pick $\tau$ as in the first case.
	
	\bigskip
	
	Suppose first that there is no register overflow at time $\alpha^{n+1}$.
	
	Once more, we want to build the sequence $(\alpha_{k}:k\in\omega)$ as for $n=1$. However, as it stands, this would be a sequence of ordinals $<\alpha^{n+1}$ and it would be defined over $L_{\alpha^{n+1}}$, not over $L_{\alpha}$, which would not help much to see that it is bounded.
	
	We thus modify the definition a bit: $\alpha_{0}$ will be the minimal $\zeta<\alpha$ such that $\alpha^{n}\zeta>\tau$. After that, $\alpha_{k+1}$ will be the minimal ordinal such that $\alpha_{k+1}>\text{max}\{\alpha_{0},...,\alpha_{k}\}$ and the $((k+1)$ mod $(n+1))$st component will at some time between $\alpha^{n}\alpha_{k+1}$ and $\alpha^{n}(\alpha_{k+1}+1)$ agree with the corresponding component of $c$.
	
	This is again a sequence of elements of $\alpha$ and, by Lemma \ref{f g h defbl}, it is definable over $L_{\alpha}$ and thus bounded in $\alpha$. 
	
	With $\eta=\text{sup}\{\alpha_{k}:k\in\omega\}$, we thus have $\eta<\alpha$, hence $\alpha^{n}\eta<\alpha^{n+1}$ and, by the liminf-rule, the configuration at time $\alpha^{n}\eta$ will be $c$. Consequently, the looping criterion for $P$ is once again satisfied, and $P$ does not halt.
	
	Thus, if there is no register overflow at time $\alpha^{n+1}$, then $P$ does not halt.
	
	\bigskip

	Now suppose that there is a register overflow at time $\alpha^{n+1}$. Pick one register of $P$, say $R$, that overflows at time $\alpha^{n+1}$. Thus, we can now chose the $\tau$ above additionally in such a way that, after time $\tau$ and up to time $\alpha^{n+1}$, $R$ never contains $0$.
	
	Consider the configuration $\bar{c}$ at time $\tau+\alpha^{n}$, which we can regard as arising by running $P$ for $\alpha^{n}$ many steps on the configuration it had at time $\tau$.
	
	In $\bar{c}$, no register contains $0$ that does not contain $0$ at time $\alpha^{n+1}$ by assumption on $\tau$, and by the same reason, $R$ does not contains $0$. Thus, at time $\tau+\alpha^{n}$, at most $(n-1)$ registers contain $0$. As we can regard this as step $\alpha^{n}$ in a $P$-computation starting in the configuration at time $\tau$, it follows again that $P$ is looping.
	
	\bigskip

	Both cases are finished. Thus, if $P$ does not have at least $n$ many $0$s in its registers by time $\alpha^{n+1}$, it does not halt.
	
	In particular, this holds if $P$ uses $<n$ many registers.
\end{proof}

By exploiting the proof a bit further, we obtain the following refinement:

\begin{corollary}{\label{refined halting bound}}
	For any $k\in\omega$, there is $n(k)\in\omega$ with the following property: If $L_{\alpha}\models\Sigma_{n(k)}$-collection, then an $\alpha$-ITRM-program using $\leq k$ many registers halts or loops in $<\alpha^{k+1}$ many steps. In fact, there is a natural constant $C$ such that $n(k)\leq C\cdot k$ for all $k\in\omega$.
\end{corollary}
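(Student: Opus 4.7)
The plan is to re-examine the proofs of Lemma \ref{f g h defbl}, Lemma \ref{no bad limits}, and Theorem \ref{halting time bound} and to track which fragments of ZF$^{-}$ were actually invoked. For an $\alpha$-ITRM-program $P$ using $\leq k$ registers, one only needs to run the computation up to time $\alpha^{k+1}$, and correspondingly only needs the auxiliary functions $F_{P}^{\alpha^{j}}$, $G_{P}^{\alpha^{j}}$, $H_{P}^{\alpha^{j},\hat{c}}$ for $j\leq k+1$, which by Lemma \ref{f g h defbl} are $\Sigma_{4j}$ over $L_{\alpha}$. The use of $L_{\alpha}\models\text{ZF}^{-}$ (equivalently, of regularity of $\alpha$ in $L_{\alpha+1}$) enters these proofs in exactly three places: (i) in Lemma \ref{f g h defbl}, to assemble the recursively defined sequences $(C_{\gamma}^{j}(\iota,c))_{\iota<\gamma}$ and $(D_{\gamma}^{j}(\iota,c))_{\iota<\gamma}$ for $\gamma<\alpha$ into elements of $L_{\alpha}$; (ii) in Lemma \ref{no bad limits}, to conclude that the $\Sigma_{O(k)}$-definable function $\rho\mapsto\iota(\rho)$ cannot be cofinal in $\alpha$; (iii) in Theorem \ref{halting time bound}, to conclude that the $\omega$-sequence $(\alpha_{j})_{j\in\omega}$ built there, whose graph is $\Sigma_{O(k)}$ over $L_{\alpha}$, is bounded below $\alpha$.

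For each of these three uses, $\Sigma_{m}$-collection (for the appropriate $m$) is sufficient, since $\Sigma_{m}$-collection on $L_{\alpha}$ bounds the range of any $\Sigma_{m}$-definable function with bounded domain, and likewise suffices to promote a $\Sigma_{m}$-recursive sequence on a bounded ordinal to an element of $L_{\alpha}$. Thus the strategy is: first, redo Lemma \ref{f g h defbl} with explicit complexity bookkeeping, noting that each inductive step increases the quantifier complexity by a fixed constant and that the corresponding recursion only invokes $\Sigma_{4j}$-collection at stage $j$. Next, rerun Lemma \ref{no bad limits} and the main argument of Theorem \ref{halting time bound}, this time for programs with exactly $k$ registers, observing that none of the definable functions appearing there exceeds complexity $\Sigma_{C_{0}k}$ for some fixed natural $C_{0}$. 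Finally, define $n(k)$ to be the maximum of the complexities needed in all three places, i.e.\ $n(k):=C\cdot k$ for $C$ slightly larger than $C_{0}$.

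The main obstacle will be the careful complexity accounting in the revised proof of Lemma \ref{f g h defbl}, in particular making sure that the step in which one forms $(C_{\gamma}^{j+1}(\iota,c):\iota<\gamma)$ and $(D_{\gamma}^{j+1}(\iota,c):\iota<\gamma)$ as sets of $L_{\alpha}$ really only requires collection at a level linear in $j$, and does not pick up extra quantifier alternations from the recursion theorem itself. Once this linear growth is confirmed, the conclusion that $n(k)\leq Ck$ is immediate, since the other two uses of collection in (ii) and (iii) are dominated in complexity by the recursions in (i).
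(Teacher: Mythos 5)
Your proposal is correct and follows exactly the route the paper intends: the paper offers no written proof of this corollary beyond the remark that it follows ``by exploiting the proof a bit further,'' and your plan --- tracking the three uses of regularity/collection in Lemma \ref{f g h defbl}, Lemma \ref{no bad limits} and Theorem \ref{halting time bound}, and observing that for a program with $\leq k$ registers all definable functions involved are $\Sigma_{4j}$ with $j\leq k+1$, so that $\Sigma_{Ck}$-collection suffices --- is precisely the bookkeeping being alluded to. Your caution about the recursion step not inflating the quantifier complexity beyond a linear bound is the right place to focus, and the linear growth is already built into the $\Sigma_{4k}$ accounting of Lemma \ref{f g h defbl}.
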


\begin{corollary}{\label{itrm uniform bound}}
	An $\alpha$-ITRM-program $P$ with $L_{\alpha}\models$ZF$^{-}$ halts in $<\alpha^{\omega}$ many steps or does not halt at all.
\end{corollary}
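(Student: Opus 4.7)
The plan is to observe that this corollary follows almost immediately from Theorem~\ref{halting time bound} together with the finiteness of the register set in any fixed program.

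Concretely, I would proceed as follows. First, fix an $\alpha$-ITRM-program $P$, and let $n\in\omega$ be the (finite) number of registers that $P$ uses. Second, apply Theorem~\ref{halting time bound} to $P$: since $\alpha$ is a ZF$^{-}$-ordinal, $P$ either halts in fewer than $\alpha^{n+1}$ many steps or it never halts at all. Third, note that $\alpha^{\omega}=\sup_{k<\omega}\alpha^{k}$ is a (strict) upper bound for $\alpha^{n+1}$; in particular, $\alpha^{n+1}<\alpha^{\omega}$. Hence if $P$ halts, it does so in fewer than $\alpha^{\omega}$ many steps, which is exactly the desired conclusion.

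There is really no hard step here; the only thing to check is that the bound $\alpha^{\omega}$ is meaningful for $\alpha$ a ZF$^{-}$-ordinal, but this is immediate since ZF$^{-}$-ordinals are limit ordinals well beyond any finite ordinal exponentiation of themselves (and in any case $\sup_{k<\omega}\alpha^{k}$ is a perfectly good ordinal regardless of exponential closure). The corollary is therefore a clean packaging of Theorem~\ref{halting time bound} that removes the dependence of the halting-time bound on the specific program by trading $\alpha^{n+1}$ for the uniform bound $\alpha^{\omega}$.
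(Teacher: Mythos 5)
Your proof is correct and is essentially identical to the paper's own argument: both simply take $n$ to be the number of registers used by $P$, invoke Theorem~\ref{halting time bound} to get the bound $\alpha^{n+1}$, and observe that $\alpha^{n+1}<\alpha^{\omega}$. Nothing further is needed.
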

\begin{proof}
	$P$ uses some natural number $n$ of registers. Thus, if it halts, it halts before time $\alpha^{n+1}<\alpha^{\omega}$.
\end{proof}

\begin{corollary}{\label{clockable supremum}}
	If $L_{\alpha}\models$ZF$^{-}$, then $\alpha^{\omega}$ is the supremum of the $\alpha$-ITRM-halting times.
\end{corollary}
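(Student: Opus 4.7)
By Corollary \ref{itrm uniform bound}, every halting $\alpha$-ITRM-program halts in strictly less than $\alpha^{\omega}$ many steps, so the supremum of the halting times is at most $\alpha^{\omega}$. The plan is to establish the matching lower bound by constructing, for each $n \in \omega$, an $\alpha$-ITRM-program $P_{n}$ whose halting time is at least $\alpha^{n}$. Since $\alpha^{\omega} = \sup_{n\in\omega} \alpha^{n}$, the halting times will then be cofinal in $\alpha^{\omega}$, and together with the upper bound this yields the claimed equality.

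I would build $P_{n}$ as a nested-counter program using $n+1$ registers $R_{0}, R_{1}, \ldots, R_{n}$, all initialized to $0$. The innermost loop repeatedly increments $R_{0}$. Because $\alpha$ is exponentially closed (hence a limit), an increasing $\omega$-sequence of ordinals below $\alpha$ has liminf strictly less than $\alpha$, so $R_{0}$ faithfully counts through the ordinals $<\alpha$; at time $\alpha$ its liminf equals $\alpha$, which triggers a reset to $0$. At that moment $P_{n}$ detects the reset and increments $R_{1}$, then resumes incrementing $R_{0}$. By an easy induction on $k$, register $R_{k}$ receives its $\xi$-th increment (for $\xi<\alpha$) at a time of the form $\alpha^{k}\cdot\xi + \gamma_{k,\xi}$ with $\gamma_{k,\xi}<\alpha^{k}$, so that $R_{k}$ itself first undergoes a reset at time $\alpha^{k+1}$. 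The halting instruction is triggered precisely when $R_{n}$ resets, giving a halting time in the interval $[\alpha^{n},\alpha^{n+1})$, as required.

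The main obstacle is the bookkeeping at limit stages, since one must ensure that at each time $\alpha^{k}$ the active program line is the one that recognizes the reset of $R_{k-1}$ and triggers the increment of $R_{k}$, rather than, say, looping back to increment $R_{k-1}$ again and overlooking the reset. This is where the Boolean combinations of equality tests in our modified programming language pay off: one stores a few target configurations in auxiliary registers and, at each step, tests the entire current configuration against them in a single instruction, so that the correct branching decision is forced at every successor step. Choosing program-line indices so that the ``detect and propagate reset'' line has the smallest relevant index on the relevant segments guarantees, via the liminf rule applied to the active program line, that the program is in the intended configuration at each of the limit times $\alpha, \alpha^{2}, \ldots, \alpha^{n}$. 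Once this is verified, the inductive time analysis goes through routinely, completing the lower bound and hence the proof.
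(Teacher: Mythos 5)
Your proposal is correct and takes essentially the same approach as the paper: the upper bound is Corollary \ref{itrm uniform bound}, and the lower bound is the same nested-counter construction, which the paper phrases as an induction (given a program halting at time $\alpha^{n}$, run it once before each increment of a fresh register) and you unroll into an explicit $(n+1)$-register program. One phrasing slip worth fixing: an increasing $\omega$-sequence of ordinals below $\alpha$ can have liminf equal to $\alpha$ (e.g.\ when $\mathrm{cf}(\alpha)=\omega$); what actually prevents $R_{0}$ from resetting before time $\alpha$ is that at any limit time $\lambda<\alpha$ its previous contents are all $<\lambda$, so the liminf is $\leq\lambda<\alpha$.
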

\begin{proof}
	That $\alpha^{\omega}$ is an upper bound was just proved. 
	On the other hand, it is easy to see that, for any $n\in\omega$, there is an $\alpha$-ITRM-program that halts at time $\alpha^{n}$:
	
	To halt at time $\alpha$, just count upwards in some register, starting with $1$ and halt once that register overflows (i.e., contains $0$).
	
	Now, if $P$ halts at time $\alpha^{n}$, take a program as above, but before incrementing its register, run $P$ once each time. Clearly, this halts at time $\alpha^{n+1}$.
\end{proof}

We are ready to prove the first direction of our main result.

\begin{thm}{\label{left to right}}
	Suppose that $\alpha$ is a ZF$^{-}$-ordinal. Then COMP$^{\text{ITRM}}_{\alpha}=L_{\alpha+1}\cap\mathfrak{P}(\alpha)$.
\end{thm}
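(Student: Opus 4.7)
The plan is to prove the two inclusions separately, and both essentially amount to citing the machinery that has already been assembled in this section.

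For the inclusion COMP$^{\text{ITRM}}_{\alpha}\subseteq L_{\alpha+1}\cap\mathfrak{P}(\alpha)$, I would start by fixing an arbitrary $x\subseteq\alpha$ that is $\alpha$-ITRM-decidable via some program $P$ (with parameter $\gamma<\alpha$). Since $L_{\alpha}\models\text{ZF}^{-}$, Corollary \ref{itrm uniform bound} gives that, for each input $\iota<\alpha$, the computation $P(\iota,\gamma)$ halts in strictly less than $\alpha^{\omega}$ many steps. More uniformly, since $P$ has a fixed finite number $n$ of registers, Theorem \ref{halting time bound} bounds every halting computation by $\alpha^{n+1}$. In particular, $x$ is computable by an $\alpha$-ITRM in fewer than $\alpha^{n+1}$ steps for some fixed $n$. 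Now Lemma \ref{early computables} applies directly to conclude $x\in L_{\alpha+1}$.

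For the reverse inclusion $L_{\alpha+1}\cap\mathfrak{P}(\alpha)\subseteq\text{COMP}^{\text{ITRM}}_{\alpha}$, I would simply invoke Lemma \ref{succ level comp}: since $\alpha$ is assumed exponentially closed throughout the section (and this is implied in any case by $L_{\alpha}\models\text{ZF}^{-}$ for $\alpha>\omega$), every $x\in L_{\alpha+1}\cap\mathfrak{P}(\alpha)$ is $\alpha$-ITRM-computable.

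The whole proof is therefore a two-line assembly of previously established lemmas. There is no genuine obstacle here, as all of the technical work has already been done: the definability of the iterates $F_{P}^{\alpha^{k}}$, $G_{P}^{\alpha^{k}}$, $H_{P}^{\alpha^{k}}$ over $L_{\alpha}$ (Lemma \ref{f g h defbl}), the absence of proper limit values at times $\alpha^{k}$ (Lemma \ref{no bad limits}), the looping criterion (Lemma \ref{looping criterion}), and the halting-time bound $\alpha^{n+1}$ for $n$-register programs (Theorem \ref{halting time bound}). Accordingly, the theorem's proof should be essentially one paragraph combining Corollary \ref{itrm uniform bound}, Lemma \ref{early computables}, and Lemma \ref{succ level comp}.
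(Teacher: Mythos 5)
Your proposal matches the paper's own proof essentially verbatim: the forward inclusion is Theorem \ref{halting time bound} (giving the $\alpha^{n+1}$ bound for an $n$-register program) followed by Lemma \ref{early computables}, and the reverse inclusion is Lemma \ref{succ level comp}. The detour through Corollary \ref{itrm uniform bound} is redundant but harmless, since the uniform bound $\alpha^{n+1}$ is what is actually used.
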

\begin{proof}
	Let $x$ be $\alpha$-ITRM-computable by the $\alpha$-ITRM-program $P$. Suppose that $P$ uses $n$ registers. Then $P$ runs for $<\alpha^{n+1}$ many steps on each input by Theorem \ref{halting time bound}. Hence $x$ is $\alpha$-ITRM-decidable with time bound $\alpha^{n+1}$. By Lemma \ref{early computables}, it follows that $x$ is definable over $L_{\alpha}$. Hence $x\in L_{\alpha+1}$.
	
	On the other hand, if $x\in L_{\alpha+1}$, then, by Lemma \ref{succ level comp}, $x$ is $\alpha$-ITRM-decidable.
\end{proof}

\begin{remark}
	As a consequence of the last theorem, it follows that, for $L_{\alpha}\models$ZF$^{-}$, $\alpha$-ITRMs cannot evaluate truth predicates for $L_{\alpha}$ (since such a truth predicate would allow us to compute sets outside of $L_{\alpha+1}$, see below.) In fact, together with the results below, this is possible if and only if $L_{\alpha}\not\models$ZF$^{-}$.
\end{remark}

\begin{remark}
We point out that, for $\alpha$ a ZF$^{-}$-ordinal, the realm of computable objects for $\alpha$-ITRMs is $L_{\alpha+1}$, which is only a very minor portion of $L_{\alpha^{\omega}}$, the first $L$-level containing all halting $\alpha$-ITRM-computations. To our knowledge, this is the first time such a divergence between levels containing computations and levels containing the computable objects has occured in ordinal computability. (See, however, footnote $4$ below.)
\end{remark}

\bigskip

We now work towards the reverse direction. To this end, we introduce some terminology from \cite{C}.

\begin{defini}{\label{singular}}
	An ordinal $\alpha$ is (w)ITRM-singular if and only if there are $\beta<\alpha$, a cofinal function $f:\beta\rightarrow\alpha$ and an $\alpha$-(w)ITRM-program $P$, $\xi<\alpha$ such that $P(\iota,\xi)\downarrow=f(\iota)$ for all $\iota\in\beta$. 
\end{defini}

We note that the singularising functions can be chosen to have a particularly nice form:

\begin{prop}{\label{continuous itrm-singularization}}
	If $\alpha$ is (w)ITRM-singular, then there is a function $g:\gamma\rightarrow\alpha$ with $\gamma<\alpha$ such that $f[\gamma]$ is unbounded in $\alpha$ and such that $g$ is $\alpha$-(w)ITRM-computable, continuous and increasing.
\end{prop}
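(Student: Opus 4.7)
The plan is to promote the given singularising function to a strictly increasing continuous one via a ``running maximum plus one'' device. Let $f:\beta\to\alpha$ be the $\alpha$-(w)ITRM-computable cofinal map witnessing the singularity of $\alpha$, with $\beta<\alpha$. First I would truncate the domain: let $\gamma\leq\beta$ be the least ordinal with $f[\gamma]$ cofinal in $\alpha$. Then $\gamma<\alpha$; moreover $\gamma$ must be a limit ordinal, since for a successor $\iota+1$ the set $f[\iota+1]=f[\iota]\cup\{f(\iota)\}$ is bounded by $\max(\sup f[\iota],f(\iota))+1<\alpha$ whenever $f[\iota]$ is. By minimality of $\gamma$, the set $f[\iota]$ is bounded in $\alpha$ for every $\iota<\gamma$.

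Next I would define $g:\gamma\to\alpha$ by the transfinite recursion $g(0):=0$, $g(\iota+1):=\max(g(\iota),f(\iota))+1$, and $g(\delta):=\sup_{\iota<\delta}g(\iota)$ at limits $\delta<\gamma$. A routine induction shows that $g$ is strictly increasing and continuous; since $g(\iota+1)>f(\iota)$ for every $\iota+1<\gamma$, the image $g[\gamma]$ is cofinal in $\alpha$, and the minimality of $\gamma$ guarantees $g(\iota)<\alpha$ for every $\iota<\gamma$, so $g$ is well-defined as an $\alpha$-valued function.

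To implement $g$ on an $\alpha$-(w)ITRM I would use a counter register $C$, a value register $V$, a scratch register $T$, and a disjoint fixed block of registers for a subroutine computing $f$. On input $\iota$, initialise $C:=V:=0$ and loop: if $C=\iota$, halt with output $V$; otherwise call the $f$-subroutine on $C$, place the result in $T$, assign $V:=\max(V,T)+1$, increment $C$, and jump back. The decisive feature is that $V$ is written only at the $\max$-step, and always to a strictly larger value, while the $f$-subroutine operates on disjoint registers; hence $V$ is globally monotone non-decreasing throughout the entire transfinite run.

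The main obstacle is the behaviour at limit stages, and it is precisely this monotonicity that resolves it. At any limit time $\tau$ of the outer loop, the liminf rule sets $C$ to the limit $\delta$ of the previously attained $C$-values and sets $V$ to $\liminf V$, which by monotonicity equals $\sup V = \sup_{\xi<\delta} g(\xi) = g(\delta)$, in agreement with the recursion. Since $\delta<\gamma$ forces $g(\delta)<\alpha$, no register overflow (and hence no reset or crash) occurs, and the invariant ``$V=g(C)$ at the top of the loop'' is preserved through $\tau$. Consequently, when $C$ eventually reaches $\iota$, the output register holds exactly $g(\iota)$, as required.
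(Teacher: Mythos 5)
Your proof is correct and follows essentially the same route as the paper's: truncate the domain to the least $\gamma$ with $f[\gamma]$ cofinal, then compute a running supremum of $f$ on initial segments in a dedicated register, using the fact that a monotonically non-decreasing register has $\liminf=\sup$ at limit times so the accumulator survives limits correctly. Your only deviation is the ``$+1$'' step, which upgrades the paper's weakly increasing $g(\xi)=\sup\{f(\iota):\iota<\xi\}$ to a strictly increasing function (still $<\alpha$ by additive closure of $\alpha$); this is a harmless, slightly stronger variant of the same argument.
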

\begin{proof}
	Let $f:\beta\rightarrow\alpha$ with $\beta<\alpha$ be such that $f[\beta]$ is unbounded in $\alpha$ and $f$ is $\alpha$-(w)ITRM-computable. 
	Now pick $\rho\leq\beta$ minimal such that $f[\rho]$ is unbounded in $\alpha$ and define $g:\rho\rightarrow\alpha$ by 
	$g(\xi)=\text{sup}\{f(\iota):\iota<\xi\}$ for $\xi<\rho$. This can be computed on an $\alpha$-(w)ITRM as follows: Given $\xi<\rho$ in the input register, successively compute the values of $f(\iota)$ for all $\iota<\xi$. At the start of this computation, store $f(0)$ in some extra register $R$. Whenever some $f(\iota)$ is larger than the current content of that register, replace the content of that register with $f(\iota)$. When we reach $\xi$, that register will contain $g(\xi)$. Then $g$ is as desired. 
\end{proof}

\begin{lemma}{\label{itrm vs succ level}}
	Suppose that $\alpha$ is exponentially closed. Then $\alpha$ is ITRM-singular if and only if $\alpha$ is singular in $L_{\alpha+1}$, i.e. if and only if $L_{\alpha}\not\models\text{ZF}^{-}$.
\end{lemma}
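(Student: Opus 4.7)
The plan is to combine Lemma \ref{zf- char} (which translates ``singular in $L_{\alpha+1}$'' into ``$L_{\alpha}\not\models\text{ZF}^{-}$'') with the already-established direction of the main theorem, namely Theorem \ref{left to right}, together with the effective realization of definable subsets given by Lemma \ref{succ level comp}. The two implications then fall out rather cleanly, so I would prove them separately.

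For the direction from right to left, I would start with the assumption that $\alpha$ is singular in $L_{\alpha+1}$ and pick a cofinal $f:\beta\rightarrow\alpha$ with $\beta<\alpha$ and $f\in L_{\alpha+1}$. Encoding the graph as $G:=\{p(\iota,\rho):\iota<\beta,\ f(\iota)=\rho\}\subseteq\alpha$, we still have $G\in L_{\alpha+1}\cap\mathfrak{P}(\alpha)$. By Lemma \ref{succ level comp}, $G$ is $\alpha$-ITRM-decidable. An $\alpha$-ITRM can then compute $f(\iota)$ by running through $\rho<\alpha$ in an auxiliary register and testing $p(\iota,\rho)\in G$; this is a total computable function on $\beta$ with cofinal image in $\alpha$, so $\alpha$ is ITRM-singular.

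For the forward direction, I would argue by contraposition, invoking Theorem \ref{left to right}. Suppose $L_{\alpha}\models\text{ZF}^{-}$ and let $P$, $\xi<\alpha$, $\beta<\alpha$, $f:\beta\rightarrow\alpha$ witness ITRM-singularity. Since $p$ is ITRM-computable, the graph $G\subseteq\alpha$ defined as above is $\alpha$-ITRM-decidable: on input $\nu$, decode $\nu=p(\iota,\rho)$, run $P(\iota,\xi)$ to obtain $f(\iota)$, and compare. By Theorem \ref{left to right}, $G\in L_{\alpha+1}\cap\mathfrak{P}(\alpha)$, hence $f\in L_{\alpha+1}$. But then $\alpha$ is singular in $L_{\alpha+1}$, contradicting Lemma \ref{zf- char}. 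So if $L_{\alpha}\models\text{ZF}^{-}$, then $\alpha$ is not ITRM-singular; equivalently, ITRM-singularity forces $L_{\alpha}\not\models\text{ZF}^{-}$, which by Lemma \ref{zf- char} gives singularity in $L_{\alpha+1}$.

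The whole argument is short because the real work is hidden inside the previously-proved Theorem \ref{left to right}, which in turn relied on the halting-time bound (Theorem \ref{halting time bound}) and the definability analysis of Lemma \ref{f g h defbl}. The only point that requires a small sanity check is that the coded graph of the singularizing $f$ genuinely is a subset of $\alpha$ (not merely of $\alpha\times\alpha$) so that Theorem \ref{left to right} applies directly; this is precisely why the ambient convention of encoding pairs via Cantor's function $p$ was set up at the start of this section. No genuinely new ideas seem to be needed beyond assembling these pieces.
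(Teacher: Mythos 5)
Your proposal is correct and follows essentially the same route as the paper: the right-to-left direction codes the singularizing function as a subset of $\alpha$ and applies Lemma \ref{succ level comp}, and the forward direction codes the ITRM-computable singularizing function and applies Theorem \ref{left to right} to derive a contradiction with regularity in $L_{\alpha+1}$. You merely make the Cantor-pairing encoding of the graph explicit where the paper says ``by a simple coding''.
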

\begin{proof}
	Suppose first that $\alpha$ is singular in $L_{\alpha+1}$. Then there is $f\in L_{\alpha+1}$ such that $f$ maps some $\delta<\alpha$ cofinally into $\alpha$. By a simple coding, we can regard $f$ as a subset of $\alpha$. By Lemma \ref{succ level comp}, $f$ is $\alpha$-ITRM-computable. Thus, $\alpha$ is ITRM-singular.
	
	Now suppose that $\alpha$ is ITRM-singular. Suppose for a contradiction that $L_{\alpha}\models$ZF$^{-}$. Let $f$ be an $\alpha$-ITRM-computable function that maps some $\delta<\alpha$ cofinally into $\alpha$. By coding, we can regard $f$ as a subset of $\alpha$. By Theorem \ref{left to right}, we have $f\in L_{\alpha+1}$. Thus $\alpha$ is singular in $L_{\alpha+1}$, a contradiction. 
\end{proof}

We will also use the following theorem from \cite{C} (Theorem 3.3.28):

\begin{lemma}{\label{singular truth predicate evaluation}}
	If $\alpha$ is ITRM-singular, then there is an $\alpha$-ITRM-program $P_{\text{truth}}$ such that, for all $\gamma<\alpha$ and all $n\in\omega$, we have $P(n,\gamma)\downarrow=1$ if and only if $L_{\alpha}\models\phi_{n}(\gamma)$ and otherwise, we have $P(n,\gamma)\downarrow=0$.
\end{lemma}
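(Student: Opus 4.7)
The plan is to interpret formulas on $L_{\alpha}$ by exhaustive search using the singularising function, with the bounded truth predicate from Lemma \ref{koepke seyfferth} as the base case of a recursion on formula complexity.

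First, I would apply Proposition \ref{continuous itrm-singularization} to fix an $\alpha$-ITRM-computable, continuous, strictly increasing cofinal map $g:\delta\rightarrow\alpha$ with $\delta<\alpha$, computed by some program $P_{g}$. By Lemma \ref{koepke seyfferth}, fix an $\alpha$-ITRM-computable code $c\subseteq\alpha$ for $L_{\alpha}$ together with a program $P_{\Delta_{0}}$ that decides the bounded truth predicate for $L_{\alpha}$ with ordinal parameters (where the $c$-code of an ordinal $\iota$ is $\iota+1$).

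The program $P_{\text{truth}}$ would then be written as an interpreter that, on input $(n,\gamma)$, treats $\phi_{n}$ as a tree of logical connectives and quantifiers with bounded leaves; since formula codes are natural numbers, this parsing is a routine $\omega$-computation carried out on one register. Bounded leaves are evaluated by invoking $P_{\Delta_{0}}$ on the current environment of parameter codes. An existential subformula $\exists x\,\psi$ is processed by a nested search: an outer register runs through $\xi<\delta$ (detected as complete by comparison with $\delta$, which is stored once in an auxiliary register), and for each $\xi$ an inner register runs through $\iota<g(\xi)$, where $g(\xi)$ is obtained by a subroutine call to $P_{g}$; for each such $\iota$, the interpreter recursively evaluates $\psi$ with the newly bound variable represented by the code $\iota$, returning $1$ as soon as some recursive call returns $1$, and $0$ if the search is exhausted. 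Universal quantifiers are dual. Since $g[\delta]$ is cofinal in $\alpha$ and $c$ surjects onto $L_{\alpha}$, every element of $L_{\alpha}$ is represented by some $\iota<g(\xi)$ for some $\xi<\delta$, so the search exhausts $L_{\alpha}$ and the semantics agrees with $L_{\alpha}\models$.

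The nontrivial bookkeeping issue is that a single $\alpha$-ITRM-program must handle formulas of unbounded quantifier depth with only finitely many registers. I would resolve this by encoding the recursion stack in a single register: each stack frame records a pointer to the current subformula of $\phi_{n}$, the current environment (a tuple of ordinal codes $<\alpha$ for the values of quantified variables along the path), and the current pair $(\xi,\iota)$ of search indices for each pending quantifier. Since $\alpha$ is exponentially closed, iterated Cantor pairing encodes finite tuples of ordinals $<\alpha$ as single ordinals $<\alpha$, so the whole stack fits into one register and can be pushed, popped and updated in finitely many steps. The main obstacle is thus purely organizational rather than mathematical; termination follows from the finiteness of $\phi_{n}$ (which bounds the recursion depth uniformly in $n$) together with the fact that every search loop runs through an ordinal strictly below $\alpha$ and hence terminates on the $\alpha$-ITRM.
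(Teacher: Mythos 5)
Your overall architecture (recursion on formula complexity, bounded truth predicate as base case, nested searches organized by the singularizing function, and a recursion stack packed into a single register via iterated Cantor pairing) matches the paper's construction. But there is a genuine gap at exactly the point you dismiss as ``purely organizational'': the behaviour of the packed stack register at limit times. Its content at a limit stage is determined by the liminf rule, not by your explicit push/pop operations, and the inferior limit of a sequence of Cantor-pair codes is in general \emph{not} the code of the componentwise limits. For instance, after an inner search has stepped through $p(e,i)$ for all $i<\omega$ (with $e$ the code of the outer environment), the register contains $\liminf_{i}p(e,i)$, which equals $p(e,\omega)$ only when $e>\omega$ (Lemma \ref{limit preservation}); if $e\leq\omega$ the environment is destroyed. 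Since your inner searches run through $\iota<g(\xi)$ with $g[\delta]$ cofinal in $\alpha$, these limits pass arbitrarily high, so without further precautions the stack is scrambled infinitely often. This is not a technicality one can wave away: if the construction worked as you describe it, it would work for every exponentially closed $\alpha$, contradicting Theorem \ref{left to right} together with the diagonalization of Theorem \ref{no zf- to more than succ}. ITRM-singularity must enter not merely to bound the search loops but to rescue the stack encoding.

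The paper's fix is to interleave guards into the stack: the main register holds tuples of the form $(f(\iota_{0})+1,\zeta_{0},f(\iota_{1})+1,\zeta_{1},\dots)$ with each search value $\zeta_{i}<f(\iota_{i})+1$ bounded by the entry packed beneath it, so that Lemma \ref{limit preservation} applies at every limit (the first argument of each pairing dominates the limit being approached by the second), while a separate ``regulating register'' with $\delta^{2}$ at its bottom performs the depth-first search through $\delta^{<\omega}$ and is likewise limit-safe because all its entries are $<\delta$. Your proposal contains neither the guards nor the two-register separation, so the crucial step fails. (A smaller inaccuracy: decoding and updating a Cantor-paired stack is not a finite-step operation on a register machine, though it is computable; this is harmless but symptomatic of treating the encoding as free.)
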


Although we do not give a detailed proof of this result here and rather refer to \cite{C}, we offer a sketch for the interested reader to see how ITRM-singularity enters the picture. Let a code $c$ for $L_{\alpha}$ be given, where the element coded by $\iota<\alpha$ in $c$ is given by $h(\iota)$. Moreover, let us say that the statement to be evaluated is $\exists{x_{1}}\forall{x_{2}}...\exists{x_{n-1}}\forall{x_{n}}\psi$, where $\psi$ is quantifier-free. Thus, $\psi$ is a Boolean combination of statements that can be read off from $c$ for all assignments, which can easily be done by an $\alpha$-ITRM with access to $c$.

 Below, we will represent a sequence $(\alpha_{0},...,\alpha_{k})$ of ordinals using two stacks, one of which contains $k$, while the other contains $p(\alpha_{0},...,\alpha_{k})$, which is defined thus: $p(\alpha_{0})=\alpha_{0}$, $p(\alpha_{0},\alpha_{1})$ is Cantors's ordinal pairing function, and $p(\alpha_{0},...,\alpha_{k+1})=p(p(\alpha_{0},...,\alpha_{k}),\alpha_{k+1})$.

Now, what we would like to do - and what is done e.g. in the work of Koepke on ORMs - is the following: Store $0$ on the bottom of a stack. We now want to test whether $\forall{x_{2}}...\exists{x_{n-1}}\forall{x_{n}}\psi$ holds when one substitutes $x_{1}$ with $h(0)$ in $\psi$. To this end, we successively put all elements of $\alpha$ on the stack, considering $p(0,\iota)$ for all $\iota<\alpha$ one after the other. For each such pair, we then want to evaluate whether $\exists{x_{3}}\forall{x_{4}}...\exists{x_{n-1}}\forall{x_{n}}\psi$ holds when one substitutes $h(0)$ for $x_{1}$ and $h(\iota)$ for $x_{2}$, which is now done by putting further elements on the stack. If the answer is ``yes'' for each $\iota$, we finally halt and return `true'. If the answer is ``no'' for some $\iota$, we replace $0$ with $1$ and repeat the whole procedure, and so on, until we have either found a witness for the $x_{1}$ or have run through the whole of $\alpha$ and thus know that none exists.

However, as it stands, this does not work (in fact, by our remark above on the inability of $\alpha$-ITRMs to evaluate truth predicates in $L_{\alpha}$ when $L_{\alpha}\models$ZF$^{-}$, it cannot). The reason is this: After, e.g., considering $p(0,i)$ for all $i\in\omega$, the stack register will not contain $p(0,\omega)$, as it should, but simply $\omega$, thus losing all information. The same happens frequently when the stack contents approach limits.

Fortunately, there is a way out: As is already observed and used in \cite{K1}, the following is true:

\begin{lemma}{\label{limit preservation}}
	Let $(\beta_{\iota}:\iota<\delta)$ be a sequence of ordinals of limit length, $\beta=\text{liminf}_{\iota<\delta}\beta_{\iota}$, and let $\alpha>\beta$. 
	Then $\text{liminf}_{\iota<\delta}p(\alpha,\beta_{\iota})=p(\alpha,\beta)$.
\end{lemma}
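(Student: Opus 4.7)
The plan is to reduce everything to three elementary properties of Cantor's ordinal pairing $p$, which enumerates pairs $(\xi,\eta)$ in the lexicographic order of $(\max(\xi,\eta),\xi,\eta)$: (P1) for $\beta',\beta''<\alpha$, $p(\alpha,\beta')<p(\alpha,\beta'')$ iff $\beta'<\beta''$; (P2) $\beta'\mapsto p(\alpha,\beta')$ is continuous on $\alpha$ (so $p(\alpha,\lambda)=\sup_{\eta<\lambda}p(\alpha,\eta)$ for limit $\lambda\leq\alpha$); and (P3) if $\beta'\geq\alpha$, then $p(\alpha,\beta')>p(\alpha,\gamma)$ for every $\gamma<\alpha$, because $p(\alpha,\beta')$ lands in the block of pairs with maximum $\beta'$, which sits strictly above the block of pairs with maximum $\alpha$.

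I would prove the two inequalities separately. For the direction $\text{liminf}_{\iota<\delta}p(\alpha,\beta_\iota)\leq p(\alpha,\beta)$: the hypothesis $\text{liminf}_{\iota<\delta}\beta_\iota=\beta$ forbids having $\beta_\iota>\beta$ eventually (else the liminf would be at least $\beta+1$), so $\beta_\iota\leq\beta<\alpha$ must hold cofinally often; (P1) then yields $p(\alpha,\beta_\iota)\leq p(\alpha,\beta)$ cofinally often, from which the desired inequality on the liminf is immediate.

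For the reverse direction, I would fix an arbitrary $\gamma<p(\alpha,\beta)$ and show that eventually $p(\alpha,\beta_\iota)>\gamma$, splitting into two subcases according to (P2). If $\gamma<p(\alpha,0)$, then for every single $\iota$, either $\beta_\iota<\alpha$ and (P1) gives $p(\alpha,\beta_\iota)\geq p(\alpha,0)>\gamma$, or $\beta_\iota\geq\alpha$ and (P3) gives the same conclusion. Otherwise $\gamma\geq p(\alpha,0)$, so in particular $\beta>0$, and (P2) (together with the successor case $p(\alpha,\eta+1)=p(\alpha,\eta)+1$) supplies some $\eta<\beta$ with $\gamma\leq p(\alpha,\eta)$; since $\text{liminf}_{\iota<\delta}\beta_\iota=\beta>\eta$, we have $\beta_\iota>\eta$ eventually, and for such $\iota$ either (P1) or (P3), depending on whether $\beta_\iota<\alpha$ or $\beta_\iota\geq\alpha$, yields $p(\alpha,\beta_\iota)>p(\alpha,\eta)\geq\gamma$.

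The main obstacle is that $(\beta_\iota)$ is not assumed to stay below $\alpha$: nothing prevents $\beta_\iota$ from exceeding $\alpha$, even massively, on cofinally many indices, as long as the liminf remains $\beta<\alpha$. Property (P3) is exactly what tames this pathology: once $\beta_\iota$ crosses $\alpha$, the pair $(\alpha,\beta_\iota)$ jumps into a strictly higher block, so its $p$-value dominates everything in the $\alpha$-block and cannot interfere with either bound on the liminf. Without (P3), one would be tempted to work with a subsequence on which $\beta_\iota<\alpha$ holds, but it is more streamlined simply to verify (P1)/(P3) case-by-case as above.
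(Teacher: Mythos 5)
Your proof is correct. Note that the paper itself gives no argument for this lemma --- it is simply cited as an observation already used in Koepke's work --- so there is no ``paper proof'' to compare against; your write-up via the three properties (P1)--(P3) of the G\"odel (max-then-lexicographic) pairing is exactly the standard argument one would supply, and both inequalities are handled cleanly, including the genuinely necessary point that $\beta_{\iota}$ may exceed $\alpha$ cofinally often. Two small remarks. First, the whole lemma (and your (P1)--(P3)) presupposes that ``Cantor's ordinal pairing function'' means the pairing that enumerates pairs in the order of $(\max(\xi,\eta),\xi,\eta)$; this is the intended reading in this literature, but it is worth stating, since the lemma fails for an arbitrary pairing bijection. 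Second, your justification of (P3) in the boundary case $\beta'=\alpha$ is slightly off as phrased: the pair $(\alpha,\alpha)$ lies in the \emph{same} block as the pairs $(\alpha,\gamma)$ with $\gamma<\alpha$ rather than in a strictly higher one, but it is still the largest element of that block, so the conclusion $p(\alpha,\beta')>p(\alpha,\gamma)$ for all $\gamma<\alpha$ is unaffected. (In fact, for the lemma itself only $\beta'>\beta$ matters, and you only ever apply (P3) with $\gamma\leq\beta<\alpha$, so this case causes no harm.)
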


Thus, sequence coding is compatible with limits, provided the limits are small enough. And if $\alpha$ is ITRM-singular, this can be exploited: Suppose that $f:\delta\rightarrow\alpha$ with $\delta<\alpha$ is cofinal, total and $\alpha$-ITRM-computable. Note that, by putting $\delta^{2}$ at the bottom of the stack, we can perform a depth-first-search through $\delta^{<\omega}$ on an $\alpha$-ITRM. Take an auxiliar register $R^{*}$ in which this is done, called the ``regulating register''. For the sake of simplicity, we will consider in detail only the case that the formula is $\exists{x}\forall{y}\psi$; the rest is then a matter of iteration. We proceed as follows: at each time, $R^{*}$ contains a sequence $(\delta^{2},\iota_{0},\iota_{1})$ with $\iota_{0},\iota_{1}<\delta$. 
At the same time, the ``main register'' $R$ will contain a sequence 
$(f(\iota_{0}),\zeta_{0},f(\iota_{1}),\zeta_{1})$ with $\zeta_{i}<f(\iota_{i})$ for $i\in\{1,2\}$. For any such sequence, it is tested whether $\psi$ holds for the elements coded by $\zeta_{0}$ and $\zeta_{1}$ using the code $c$. If not, the current candidate for $\zeta_{0}$ is not good and we change the contents of $R$ first to $(f(\iota_{0})+1,\zeta_{0})$, then 
to $(f(\iota_{0})+1,\zeta_{0}+1)$ and then to $(f(\iota_{0})+1,\zeta_{0}+1,f(\iota_{1})+1,0)$, so that the search in the second component can continue. 
If yes, we simply replace $\zeta_{1}$ with $\zeta_{1}+1$. When $\zeta_{1}=f(\iota_{1})$, we have successfully checked up to $f(\iota_{1})$; in this case, we increase $\iota_{1}$ by $1$ and modify the contents of $R$ as follows: $(f(\iota_{0})+1,\zeta_{0},f(\iota_{1})+1,f(\iota_{1}))\mapsto(f(\iota_{0},\zeta_{0})\mapsto(f(\iota_{0})+1,\zeta_{0},f(\iota_{1}+1)+1,0)$ and continue. When $\zeta_{0}=f(\iota_{0})$, we have unsuccessfully searched for a witness below $f(\iota_{0})$; in that case, we modify the content of $R^{*}$ first to $(\delta^{2},\iota_{0}+1)$ and then to $\delta^{2},\iota_{0}+1,0)$ and moreover, we modify the current content $r$ of $R$ as follows: 
$r\mapsto(f(\iota_{0}+1)+1)\mapsto(f(\iota_{0}+1)+1,0)\mapsto(f(\iota_{0}+1)+1,0,f(0)+1,0)$. When $\iota_{1}=\delta$, we have successfully checked all ordinals $<\alpha$ and $\exists{x}\forall{y}\psi$ holds, where $x$ is the element coded by the current value of $\zeta_{0}$; thus, we output ``true''. On the other hand, when $\iota_{0}=\delta$, we have unsuccessfully searched through $\alpha$ for a candidate for $x$, in which case we output ``false''.


The following picture illustrates the approach.

\begin{tikzpicture}
\draw (0,6)--(0,0)--(2,0)--(2,6);
\draw (0,1)--(2,1);
\node at (1,0.5) {$\delta^{2}$};
\draw (0,2)--(2,2);
\node at (1,1.5) {$\iota_{0}$};
\draw (0,3)--(2,3);
\node at (1,2.5) {$\iota_{1}$};
\draw (0,4)--(2,4);
\node at (1,3.5) {...};
\draw (0,5)--(2,5);
\node at (1,4.5) {$\iota_{n}$};

\draw (6,8)--(6,0)--(10,0)--(10,8);
\draw (6,1)--(10,1);
\node at (8,0.5) {$f(\iota_{0})$};
\draw (6,2)--(10,2);
\node at (8,1.5) {$\xi_{0}<f(\iota_{0}+1)$};
\draw (6,3)--(10,3);
\node at (8,2.5) {$f(\iota_{1}+1)$};
\draw (6,4)--(10,4);
\node at (8,3.5) {$\xi_{1}<f(\iota_{1}+1)$};
\draw (6,5)--(10,5);
\node at (8,4.5) {...};
\draw (6,6)--(10,6);
\node at (8,5.5) {$f(\iota_{n})$};
\draw (6,7)--(10,7);
\node at (8,6.5) {$\xi_{n}<f(\iota_{n})+1$};

\draw [->] (2,1.5)--(6,0.5);
\draw [->] (2,2.5)--(6,2.5);
\draw [->] (2,4.5)--(6,5.5);
\end{tikzpicture}

\bigskip

Given this lemma, the rest is a matter of a standard diagonalization:

\begin{thm}{\label{no zf- to more than succ}}
	Suppose that $\alpha$ is exponentially closed and $L_{\alpha}\not\models$ZF$^{-}$. Then 
	COMP$^{\text{ITRM}}_{\alpha}\neq L_{\alpha+1}\cap\mathfrak{P}(\alpha)$.
\end{thm}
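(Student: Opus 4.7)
The plan is to combine the truth-predicate evaluator provided by Lemma \ref{singular truth predicate evaluation} with a Tarski-style diagonal argument to exhibit a subset of $\alpha$ that is $\alpha$-ITRM-computable but does not lie in $L_{\alpha+1}$. First I would unpack the hypothesis: $L_{\alpha}\not\models\text{ZF}^{-}$ means, via Lemma \ref{zf- char}, that $\alpha$ is singular in $L_{\alpha+1}$, and hence, via Lemma \ref{itrm vs succ level}, that $\alpha$ is ITRM-singular. Thus Lemma \ref{singular truth predicate evaluation} supplies an $\alpha$-ITRM-program $P_{\text{truth}}$ such that, for all $n\in\omega$ and $\gamma<\alpha$, $P_{\text{truth}}(n,\gamma)\downarrow=1$ iff $L_{\alpha}\models\phi_{n}(\gamma)$, where $(\phi_{n})_{n\in\omega}$ enumerates the formulas with one free variable.

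Next I would set
$$T:=\{p(n,\gamma):n\in\omega,\ \gamma<\alpha,\ L_{\alpha}\models\phi_{n}(\gamma)\}\subseteq\alpha.$$
Since exponential closure of $\alpha$ ensures both that $p$ maps $\alpha\times\alpha$ into $\alpha$ and that the inversion of $p$ is $\alpha$-ITRM-computable, an $\alpha$-ITRM can, on input $\iota<\alpha$, decompose $\iota=p(n,\gamma)$, test $n\in\omega$, and then invoke $P_{\text{truth}}(n,\gamma)$ (outputting $0$ if $\iota$ is not of this form). Consequently $T\in\text{COMP}^{\text{ITRM}}_{\alpha}$.

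To show $T\notin L_{\alpha+1}$, I would argue by contradiction via Tarski. Assume $T$ is definable over $L_{\alpha}$; using the $\Delta_{1}$-definable surjection $h:\alpha\rightarrow L_{\alpha}$ implicit in Lemma \ref{koepke seyfferth}, finitely many $L_{\alpha}$-parameters can be absorbed into a single ordinal parameter $a<\alpha$, so there is a formula $\psi(x,y)$ with
$$L_{\alpha}\models\phi_{n}(\gamma)\ \Longleftrightarrow\ L_{\alpha}\models\psi(p(n,\gamma),a)$$
for all $n\in\omega$ and $\gamma<\alpha$. The standard diagonal lemma, applied to the formula $\theta(z,y):=\neg\psi(p(z,y),a)$, then yields some $m\in\omega$ such that $\phi_{m}(y)$ is equivalent over $L_{\alpha}$ to $\neg\psi(p(m,y),a)$, and evaluating at $y=a$ gives $L_{\alpha}\models\phi_{m}(a)\Leftrightarrow L_{\alpha}\not\models\phi_{m}(a)$, the desired contradiction.

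The $\alpha$-ITRM-computability of $T$ and the diagonal lemma itself are routine. I expect the only delicate point, and thus the main obstacle, to be the parameter reduction: one must verify that the surjection $h$ from Lemma \ref{koepke seyfferth} is itself sufficiently definable over $L_{\alpha}$ for the replacement of an $L_{\alpha}$-parameter tuple by a single ordinal to preserve the definability of $T$ by a formula with one ordinal parameter. This, however, follows from standard finestructural facts about the $L$-hierarchy together with the exponential closure of $\alpha$ (indeed, already implicit in the proof of Lemma \ref{koepke seyfferth}).
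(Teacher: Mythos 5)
Your overall strategy -- use Lemma \ref{singular truth predicate evaluation} to compute a truth set and then diagonalize to show it escapes $L_{\alpha+1}$ -- is the same as the paper's, and the reduction to a single ordinal parameter and the computability of $T$ are fine. But the key step, the application of the diagonal lemma, does not work as you have written it. You apply the diagonal lemma to $\theta(z,y):=\neg\psi(p(z,y),a)$, which contains the parameter $a$, and claim to obtain $m\in\omega$ with $\phi_{m}(y)$ equivalent over $L_{\alpha}$ to $\neg\psi(p(m,y),a)$. The fixed point produced by the diagonal lemma for a formula containing the parameter $a$ is itself a formula containing $a$, and hence is \emph{not} one of the parameter-free one-free-variable formulas $\phi_{m}$ that your set $T$ enumerates. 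Indeed, for a typical $a<\alpha$ that is not parameter-free definable over $L_{\alpha}$, no parameter-free $\phi_{m}(y)$ is equivalent to $\neg\psi(p(m,y),a)$, so the claimed equivalence is unjustified and in general false; without it, the final biconditional at $y=a$ does not arise. (This is exactly the reason Tarski undefinability of the \emph{parameter-free} truth set fails in general against definitions \emph{with} parameters.)

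The gap is repairable, because your $T$ does in effect encode satisfaction for formulas with one ordinal parameter: replace $\theta$ by the parameter-free formula $\theta(z,y):=\neg\psi(p(z,y),y)$, obtain a parameter-free fixed point $\phi_{m}(y)\leftrightarrow\neg\psi(p(m,y),y)$, and only then evaluate at $y=a$; equivalently, smuggle the parameter in through the argument by taking $\delta(y):=\exists u\exists v\,(y=p(u,v)\wedge\neg\psi(p(u,y),v))$ and evaluating at $y=p(\ulcorner\delta\urcorner,a)$, which needs no fixed-point lemma at all. The paper avoids the issue entirely by diagonalizing at the level of the set rather than the formula: it defines $D=\{\iota<\alpha: L_{\alpha}\not\models\phi_{n(\iota)}(\iota,\xi(\iota))\}$, where $\iota\mapsto(n(\iota),\xi(\iota))$ decodes $\iota$ into a formula index and an ordinal parameter, so that $D$ differs from every $\{\iota:L_{\alpha}\models\phi_{k}(\iota,\xi)\}$ at the point coding $(k,\xi)$. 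You should adopt one of these corrections; as it stands, the proof has a genuine hole at its central step.
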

\begin{proof}
	
	Suppose that $\alpha$ is not a ZF$^{-}$-ordinal. By Lemma \ref{zf- char}, it follows that $\alpha$ is singular in $L_{\alpha+1}$. By Lemma \ref{itrm vs succ level}, $\alpha$ is ITRM-singular. By Lemma \ref{singular truth predicate evaluation}, there is a program $P_{\text{truth}}$ that evaluates truth predicates in $L_{\alpha}$. 
	
	For $\gamma\in\text{On}$, let us write $n(\gamma)$ for the unique natural number $n$ and $\xi(\gamma)$ for the unique ordinal $\xi$ such that $\gamma$ can be written in the form $\gamma=\omega\xi+n$.
	
	Now let $D:=\{\iota<\alpha:L_{\alpha}\not\models\phi_{n(\iota)}(\iota,\xi(\iota))\}$. Using $P_{\text{truth}}$, $D$ is clearly $\alpha$-ITRM-decidable.
	
	Assume for a contradiction that $D\in L_{\alpha+1}$. Thus, there are $k\in\omega$ and $\xi<\alpha$ such that $D=\{\iota<\alpha:L_{\alpha}\models\phi_{k}(\iota,\xi)\}$. 
	Let $\gamma:=\omega\xi+k$. Then $L_{\alpha}\models\phi_{k}(\gamma,\xi)\Leftrightarrow\gamma\in D\Leftrightarrow L_{\alpha}\not\models\phi_{n(\gamma)}(\gamma,\xi(\gamma))\Leftrightarrow L_{\alpha}\not\models\phi_{k}(\gamma,\xi)$, a contradiction. Thus $D\notin L_{\alpha+1}$.
	
	Hence, we have $D\in\text{COMP}^{\text{ITRM}}_{\alpha}\setminus L_{\alpha+1}$, so COMP$^{\text{ITRM}}_{\alpha}\neq L_{\alpha+1}\cap\mathfrak{P}(\alpha)$.
\end{proof}

This theorem is a bit of an understatement. Using the truth predicate for $L_{\alpha}$, one can compute a code for $L_{\alpha+1}$. From this in turn, one obtains a code for $L_{\alpha+2}$ and so on. Thus, we actually get the following:

\begin{corollary}{\label{up to next limit}}
	If $\alpha$ is exponentially closed and $L_{\alpha}\not\models$ZF$^{-}$, then 
	COMP$^{\text{ITRM}}_{\alpha}\supseteq\mathfrak{P}(\omega)\cap L_{\alpha+\omega}$.
\end{corollary}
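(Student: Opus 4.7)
The plan is to iterate the argument behind Theorem \ref{no zf- to more than succ}. Since $L_{\alpha}\not\models\text{ZF}^{-}$, Lemma \ref{itrm vs succ level} gives that $\alpha$ is ITRM-singular, and Lemma \ref{singular truth predicate evaluation} then yields a program $P_{\text{truth}}$ evaluating the truth predicate for $L_{\alpha}$ with parameters in $\alpha$. Combined with the $\alpha$-ITRM-computable code $c_{\alpha}$ for $L_{\alpha}$ from Lemma \ref{koepke seyfferth}, this is the base case for an induction whose statement I would formulate as: for each $n<\omega$, there is an $\alpha$-ITRM-computable code $c_{\alpha+n}\subseteq\alpha$ for $L_{\alpha+n}$, uniformly in $n$.

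For the inductive step, assume $c_{\beta}$ has been produced for some $\alpha\leq\beta<\alpha+\omega$. First I would show that the truth predicate for $L_{\beta}$ is itself $\alpha$-ITRM-decidable by reusing the stack-based argument sketched after Lemma \ref{singular truth predicate evaluation}: the only ingredients are a code for the structure, the singularising cofinal map $f:\delta\to\alpha$, and the limit-preservation property of Cantor pairing (Lemma \ref{limit preservation}). None of these depend on whether truth is being evaluated over $L_{\alpha}$ or over $L_{\beta}$, since elements of $L_{\beta}$ are still coded by ordinals below $\alpha$. Once the $L_{\beta}$-truth predicate is in hand, I would define $c_{\beta+1}$ by assigning fresh ordinal codes to the sets $\{x\in L_{\beta}:L_{\beta}\models\phi(x,\vec{p})\}$ (encoded as pairs consisting of a formula index and a coded parameter tuple, which fits inside $\alpha$ by exponential closure), and then reducing $\in$-queries in $L_{\beta+1}$ either to queries in $c_{\beta}$ or to invocations of the new truth predicate.

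Iterating this step $\omega$-many times inside a single $\alpha$-ITRM-program parametrised by $n$ produces $c_{\alpha+n}$ uniformly. For any $x\subseteq\alpha$ with $x\in L_{\alpha+\omega}$, pick $n$ with $x\in L_{\alpha+n}$ and let $\iota_{x}<\alpha$ be its code under $c_{\alpha+n}$; then membership $\xi\in x$ is settled by reading off the appropriate entry of $c_{\alpha+n}$ (using the code for $\xi$ obtained from $c_{\alpha}$), so $x\in$ COMP$^{\text{ITRM}}_{\alpha}$.

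The main obstacle will be verifying that the truth-predicate construction of Lemma \ref{singular truth predicate evaluation} is genuinely uniform in the input structure: one has to check that the exact same depth-first stack search works when the structure is $L_{\beta}$ rather than $L_{\alpha}$, which comes down to the fact that, for $\alpha$ exponentially closed and $\beta<\alpha+\omega$, the elements of $L_{\beta}$ can be coded by ordinals below $\alpha$ via pairing and that these codings can be enumerated by the fixed singularising function $f$. A secondary care point is making sure that the passage from $c_{\beta}$ to $c_{\beta+1}$ is $\alpha$-ITRM-computable uniformly, so that the eventual algorithm deciding $x$ may take $n$ as an additional input and run the constructions $c_{\alpha}\mapsto c_{\alpha+1}\mapsto\dots\mapsto c_{\alpha+n}$ as a single subroutine before answering.
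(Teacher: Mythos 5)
Your proposal matches the paper's own argument: the paper proves this corollary precisely by the remark preceding it, namely that the truth predicate for $L_{\alpha}$ (available by Lemma \ref{singular truth predicate evaluation} since $\alpha$ is ITRM-singular) yields an $\alpha$-ITRM-computable code for $L_{\alpha+1}$, from which one obtains a code for $L_{\alpha+2}$, and so on through the finite levels up to $L_{\alpha+\omega}$. Your elaboration of the uniformity issues (reusing the stack-based truth evaluation over $L_{\beta}$ for $\alpha\leq\beta<\alpha+\omega$ and taking $n$ as an input to a single program) is exactly the intended filling-in of that sketch.
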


We will considerably extend this in the next section.

\bigskip

In any case, the proof of Theorem \ref{zf- and itrm} is now finished: For exponentially closed $\alpha$, Theorem \ref{left to right} shows that $L_{\alpha}\models$ZF$^{-}$ implies COMP$^{\text{ITRM}}_{\alpha}=L_{\alpha+1}\cap\mathfrak{P}(\alpha)$ and Theorem \ref{no zf- to more than succ} shows that $L_{\alpha}\not\models$ZF$^{-}$ implies that 
COMP$^{\text{ITRM}}_{\alpha}\neq L_{\alpha+1}\cap\mathfrak{P}(\alpha)$, which yields the desired equivalence.

\section{Towards the general Case}

We will now consider the case of ordinals $\alpha$ such that $L_{\alpha}\not\models$ZF$^{-}$. Although we are not able to determine the comptuational strength of $\alpha$-ITRMs in general, we give some information that should be helpful. The content of this section are generalizations of those obtained in \cite{CFKMNW} for ITRMs (i.e., the case $\alpha=\omega$).\footnote{We point out that our contribution to \cite{CFKMNW} was the lost melody theorem for ITRMs. In particular, we had no part in the proof that clockability implies computability for ITRMs, which is generalized to $\alpha$-ITRMs in Theorem \ref{clock implies write} below.} 

For the rest of the section, let $\alpha$ be an ordinal that is exponentially closed and ITRM-singular. 

We recall some standard terminology: For an ordinal $\gamma$, we say that $\gamma$ is $\alpha$-ITRM-computable if and only if it has an $\alpha$-ITRM-computable $\alpha$-code (i.e. a subset of $\alpha$ that codes it). We say that $\gamma$ is $\alpha$-ITRM-clockable if and only if there are an $\alpha$-ITRM-program $P$ and an ordinal $\zeta<\alpha$ such that $P(\zeta)$ halts in exactly $\gamma$ many steps.

\begin{lemma}{\label{appearance time bound}} [Cf. \cite{CFKMNW}, Theorem $5$]
	Let $P$ be an $\alpha$-ITRM-program, and let $\zeta<\alpha$ such that $P(\zeta)$ halts. Then no configuration appears more than $\omega^\omega$ many times in the computation of $P(\zeta)$.
\end{lemma}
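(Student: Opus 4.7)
My plan is a proof by contradiction, using the looping criterion (Lemma \ref{looping criterion}) together with an ordinal pigeonhole argument on the finitely many components of a configuration. To obtain the stated bound cleanly, I would prove by induction on the number $n$ of registers used by $P$ the sharper statement that in a halting computation, no configuration of $P$ can appear more than $\omega^{n}$ many times; taking the supremum over $n$ then yields the $\omega^{\omega}$ bound of the theorem.

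Suppose $c=(l,r_{1},\dots,r_{n})$ appears at a set $T$ of times of order type $>\omega^{n}$, and enumerate $T$ in increasing order as $(\tau_{\iota})$. Between any two consecutive members $\tau<\tau^{+}$ of $T$, at least one of the $n+1$ components of the configuration must strictly drop below its value in $c$ at some point of $(\tau,\tau^{+})$: otherwise $(\tau,\tau^{+})$ would witness the looping of $P$, contradicting the hypothesis that $P(\zeta)$ halts. Recording, for each non-minimal element of $T$, a single component index $i^{*}\in\{0,1,\dots,n\}$ that witnesses such a drop gives a coloring of (a cofinal subset of) $T$ by $n+1$ many colors. Since $\omega^{n}$ is additively indecomposable, an ordinal pigeonhole argument yields a subset $T'\subseteq T$ of order type at least $\omega^{n-1}\cdot(n+1)\geq\omega^{n-1}$ on which a single index $i^{*}$ is always the witnessing dropping component.

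The inductive step then compares the behaviour on $T'$ to a computation effectively using one fewer register. Between consecutive members of $T'$, register $R_{i^{*}}$ leaves value $r_{i^{*}}$ and returns to it, while the remaining $n-1$ registers and the program line also return to their values in $c$. Partitioning the computation into the maximal subintervals on which $R_{i^{*}}$ stays $\geq r_{i^{*}}$, I would treat each such subinterval as its own mini-computation and apply the inductive hypothesis to the induced configurations in the other $n-1$ registers, producing a configuration of an effectively $(n-1)$-register program that appears more than $\omega^{n-1}$ many times in a halting computation — contradicting the induction hypothesis. The base case $n=0$ is trivial, since then the state is just the (finitely many possible) program counter value and a halting computation has length at most $\omega$.

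The main obstacle will be making the register-reduction step rigorous in the presence of the liminf-rule: at limit ordinals, the behaviour of $R_{i^{*}}$ is coupled to that of the other registers (through the possibility of overflow resets), so one cannot literally excise $R_{i^{*}}$ from the program. I plan to avoid an actual program transformation and instead argue combinatorially, counting the ``excursions'' of $R_{i^{*}}$ below $r_{i^{*}}$ and, inside each maximal subinterval on which $R_{i^{*}}\geq r_{i^{*}}$, applying the inductive bound to configurations of the other $n-1$ registers (at times where $R_{i^{*}}=r_{i^{*}}$) just as in the $n$-register argument. A secondary subtlety is that the pigeonhole must produce a \emph{subset of the desired order type}, not merely a cofinal one; this is handled by iterated additive indecomposability of $\omega^{n}$, which is the reason the $\omega^{n}$ bound matches the number of registers and ultimately produces $\omega^{\omega}$ in the limit.
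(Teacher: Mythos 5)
Your overall strategy is genuinely different from the paper's, and it has a gap exactly at the point you yourself flag as ``the main obstacle'': the inductive step reducing $n$ registers to $n-1$ is not a proof. Your inductive hypothesis is a statement about halting computations of $(n-1)$-register \emph{programs}, and the projection of an $n$-register computation onto the registers other than $R_{i^{*}}$ is not such a computation (the liminf rule and the overflow resets couple the registers, as you note). Saying you will instead ``argue combinatorially \dots just as in the $n$-register argument'' does not discharge this: to run any induction you would first have to restate the claim as a purely combinatorial fact about transfinite sequences of tuples obeying a liminf rule and admitting no strong loop, and then prove the reduction for \emph{that} statement. Even granting the reduction, the arithmetic does not close as written: if each maximal subinterval on which $R_{i^{*}}\geq r_{i^{*}}$ contributes at most $\omega^{n-1}$ occurrences of $c$, you still need the number of such subintervals meeting $T'$ to be at most $\omega$ in order to conclude a bound of $\omega^{n-1}\cdot\omega=\omega^{n}$, and nothing in the proposal bounds the number of excursions of $R_{i^{*}}$ below $r_{i^{*}}$. (Two smaller points: your pigeonhole can actually be strengthened --- a finite partition of a set of additively indecomposable order type $\omega^{n}$ has a piece of the \emph{same} order type, not merely $\omega^{n-1}$ --- but this does not repair the step; and ``consecutive members of $T$'' is undefined at limit positions of $T$, so you should colour the pairs $(\tau_{\iota},\tau_{\iota+1})$ of the enumeration instead.)

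The paper avoids the register induction entirely. It orders configurations componentwise --- a well-founded partial order, since configurations are finite tuples of ordinals --- and takes $c$ \emph{minimal} among those appearing $\geq\omega^{\omega}$ many times. If some component ever dropped below the corresponding component of $c$ between occurrences of $c$, one interleaves occurrence times with drop times; at the supremum of the interleaved sequence the liminf rule produces a configuration \emph{strictly below} $c$ in the componentwise order which again recurs $\geq\omega^{\omega}$ many times (this is where the closure properties of $\omega^{\omega}$ enter), contradicting minimality. Hence no component ever drops, Lemma \ref{looping criterion} applies, and $P(\zeta)$ loops. This ``minimal bad configuration'' device is the missing engine in your argument: if you want to keep your route, you will need at least an analogous appeal to well-foundedness to ground the induction, and you should also confront the issue of proper limits (register values attained at limit times as strict suprema of increasing sequences rather than as values occurring cofinally often), which the paper singles out as the genuinely new feature of the case $\alpha>\omega$ and which your sketch never addresses.
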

\begin{proof}
This is proved by a generalization of the proof of Theorem $5$ of \cite{CFKMNW}. The new feature that one needs to take into account is the possibility of proper limits, i.e., that inferior limits are reached not by appearing cofinally often before, but as limits of increasing sequences from below. Since the general case requires a bit more care in some places, we elaborate the proofs a bit further than it is done in \cite{CFKMNW}.

So suppose for a contradiction that $P(\zeta)$ halts, but some configuration appears  $\geq\omega^\omega$ many times during this computation. The possible configurations are partially ordered by the component-wise $\leq$-relation. As configurations are finite tuples of ordinals, this ordering is well-founded. Let us assume without loss of generality that $c$ is minimal among the configurations appearing $\geq\omega^\omega$ many times during the computation of $P(\zeta)$.
Also, for $\iota<\omega^\omega$, let us denote by $\tau_{\iota}$ the $\iota$th time at which $c$ appears in the computation of $P(\zeta)$, and let $\tau:=\text{sup}_{\iota<\omega^\omega}\tau_\iota$.

\medskip

\textbf{Claim}: Between times $\tau_{0}$ and $\tau$, no component of any configuration occuring in the computation of $P(\zeta)$ was below the corresponing component in $c$.

\medskip

\begin{proof}
	Suppose otherwise. Thus, there is $\xi\in(\tau_{0},\tau)$ such that, at time $\xi$, some component of the current configuration $d$ was smaller than the corresponding component in $c$. Let $\delta$ be such that $\tau_{0}+\delta=\xi$. 
	
	Now, clearly, the same will happen at time $\tau_{\iota}+\delta$ for any $\iota<\omega^\omega$; moreover, if at least $\omega^\omega$ many occurences of $c$ would happen between times $\tau_{\iota}$ and $\tau_{\iota+\delta}$, then the same was true between times $\tau_{0}$ and $\xi$, contradicting the assumption that $\xi<\tau$. 
	
	Let us form a sequence $(\beta_{i}:i\in\omega)$ of ordinals as follows: $\beta_{0}=\tau_{0}$, $\beta_{2n+2}$ is the smallest ordinal of the form $\tau_{\iota^{\prime}}$ greater than $\beta_{2n+1}$ and $\beta_{2n+1}=\beta_{2n}+\delta$.
	
	It is not hard to see that the supremum $\hat{\gamma}$ of this sequence will be strictly below $\tau$. Let $\hat{\delta}$ be such that $\hat{\gamma}=\tau_{0}+\hat{\delta}$. Moreover, at time $\hat{\gamma}$, the configuration will be an inferior limit of a sequence of configurations that contains cofinally often both times at which the configuration was $c$ and at which it was $d$. Thus, at time $\hat{\gamma}$, the configuration $d^{\prime}$ will be strictly smaller than $c$ in the component-wise ordering. As $c^{\prime}$ appears at any time of the form $\tau_{\iota}+\hat{\delta}$ with $\iota<\omega^{\omega}$, this happens $\omega^{\omega}$ many times, contradicting the minimality of $c$.
\end{proof}

By the claim, the configuration at time $\tau$ is $c$, and we have a strong loop between times $\tau_{0}$ and $\tau$. But then, $P(\zeta)$ does not halt, a contradiction.

\end{proof}

\begin{remark}
Note that the bound $\omega^\omega$ is optimal in the above result. To this end, recall (e.g., from \cite{CFKMNW}) that a 'flag' for an ITRM consists of two registers that initially contain $0$ and $1$ and swap their contents in each step, so that the first time at which they will have equal contents will be $\omega$. By nesting flags (which is used in \cite{CFKMNW} to show that ordinals $<\omega^{\omega}$ are ITRM-clockable), it is easy to obtain, for any $k\in\omega$ and any $\alpha>0$, a halting $\alpha$-ITRM-program that repeats some configuration at least $\omega^{k}$ many times, and in fact with a program that only generates register contents $0$ and $1$.
\end{remark}

From the proof of Theorem \ref{appearance time bound}, we further obtain the following observation:

\begin{corollary}{\label{limit loops}}
	Let $\delta$ be a limit ordinal, and suppose that in the computation of $P(\zeta)$, the configuration $c$ appears both at time $\delta$ and unboundedly often before $\delta$. Then $P(\zeta)$ is looping.
\end{corollary}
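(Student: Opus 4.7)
The plan is to deduce the corollary directly from the looping criterion (Lemma \ref{looping criterion}) using the liminf-rule, closely mirroring the "Claim" that was proved inside Lemma \ref{appearance time bound}. Crucially, nothing about the multiplicity $\omega^\omega$ is needed here: once $c$ appears at a limit time as an inferior limit of earlier configurations that are themselves cofinally equal to $c$, the configurations in between cannot drop below $c$ in any component.

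In detail: write $c=(l,r_{1},\dots,r_{n})$ and let $(\sigma_{\iota}:\iota<\mu)$ enumerate cofinally in $\delta$ the times before $\delta$ at which $c$ appears; by hypothesis $\sup_{\iota<\mu}\sigma_{\iota}=\delta$. By the liminf-rule, each component of the configuration at time $\delta$ is the inferior limit of the corresponding components of the configurations appearing at times $<\delta$. Since $c$ itself is assumed to be the configuration at time $\delta$, the $j$th component of the liminf equals the $j$th entry of $c$; in particular, for each of the finitely many components $j\in\{0,1,\dots,n\}$ there is some ordinal $\tau_{j}<\delta$ such that, for all times $\beta\in(\tau_{j},\delta)$, the $j$th component of the configuration at time $\beta$ is $\geq$ the $j$th entry of $c$ (otherwise the liminf would be strictly below the $j$th entry of $c$).

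Let $\tau^{*}:=\max\{\tau_{0},\dots,\tau_{n}\}<\delta$ (a maximum over finitely many ordinals), and pick some $\iota<\mu$ with $\sigma_{\iota}>\tau^{*}$; such $\iota$ exists since the $\sigma_{\iota}$ are cofinal in $\delta$. Then by construction, the configuration at time $\sigma_{\iota}$ equals $c$, the configuration at time $\delta$ equals $c$, and every configuration occurring in between is component-wise $\geq c$. Thus the pair $(\sigma_{\iota},\delta)$ satisfies the hypothesis of Lemma \ref{looping criterion}, so it witnesses the looping of $P$. In particular $P(\zeta)$ never halts, which is exactly the conclusion.

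There is no real obstacle here; the only point to be careful about is that the configuration includes the active program line $l$, so the liminf-argument must be applied uniformly to all $n+1$ components, which is what allows us to choose the single threshold $\tau^{*}$ as a finite maximum.
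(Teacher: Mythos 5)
Your reduction to Lemma \ref{looping criterion} is the right idea, but the justification of the key step is incorrect, and the error is precisely the ``proper limit'' phenomenon that distinguishes $\alpha$-ITRMs from ordinary ITRMs. You claim that, since the $j$th component of the configuration at time $\delta$ is the inferior limit $r_{j}$ of the earlier $j$th components, there must be a threshold $\tau_{j}<\delta$ beyond which that component never drops below $r_{j}$, ``otherwise the liminf would be strictly below the $j$th entry of $c$''. This inference is valid when $r_{j}$ is $0$ or a successor ordinal (and hence for all of $\omega$-ITRMs), but it fails when $r_{j}$ is a limit ordinal: the component may converge to $r_{j}$ strictly from below, taking values $<r_{j}$ cofinally often while still having inferior limit exactly $r_{j}$. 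This is exactly the situation of a register counting $0,1,2,\dots$ and containing $\omega$ at time $\omega$ without ever having contained it before, which the paper isolates as the new difficulty for $\alpha>\omega$ (see the definition of proper limits and Lemma \ref{no bad limits}, which needs $L_{\alpha}\models\text{ZF}^{-}$ to exclude them). So the single threshold $\tau^{*}$ you construct need not exist on the strength of the liminf-rule alone.

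The conclusion of your intermediate step is nevertheless true under the hypotheses of the corollary, but proving it requires using that the \emph{entire} configuration $c$ recurs cofinally often, not merely that each component has the right liminf. One way: let $\sigma_{0}<\delta$ be an occurrence of $c$ and let $\pi$ be least positive with $c$ occurring again at $\sigma_{0}+\pi$; by determinism the computation is $\pi$-periodic from every occurrence of $c$ onward, so every value attained by a component during $[\sigma_{0},\sigma_{0}+\pi)$ is attained again cofinally often below $\delta$, whence the liminf at $\delta$ is bounded above by the componentwise minimum over one period; since that liminf is $c$, no component drops below $c$ during the period, and $(\sigma_{0},\sigma_{0}+\pi)$ witnesses the loop. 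Alternatively, one runs the minimality-and-interleaving argument from the Claim inside the proof of Lemma \ref{appearance time bound}, which is the route the paper intends when it says the corollary follows ``from the proof'' of that result. Either way, some argument of this kind must replace your parenthetical.
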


From now on, the proofs from \cite{CFKMNW} apply verbatim in the general context. We thus restrict ourselves to giving the results in their general form and sketching the proofs for the convenience of the reader.

The following lemma will be needed below for $\iota<\omega^{\omega}$.

\begin{lemma}{\label{clock low}} [Cf. \cite{CFKMNW}, Lemma $4$]
	Every ordinal $\iota<\omega_{\omega}^{\text{CK}}$ is $\alpha$-ITRM-clockable. 
	Moreover, there is a recursive function $f$ that sends (an encoding of the Cantor normal forms of) each ordinal $\gamma$ in $\omega^\omega$ to an $\alpha$-ITRM-program $P$ such that $P$ clocks $\gamma$.
\end{lemma}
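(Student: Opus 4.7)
The proof follows \cite{CFKMNW}, Lemma $4$, via the standard ``flag'' construction. A basic flag consists of two registers initialised to $0$ and $1$ whose contents are swapped at every successor step; by the liminf rule both contents are $0$ at time $\omega$, so testing their equality yields a program halting in exactly $\omega$ many steps, witnessing clockability of $\omega$. Nesting $k$ such flags, so that each inner flag's equality signal flips the next outer flag, produces a program clocking $\omega^{k}$ for any $k\in\omega$. Since all flag contents remain in $\{0,1\}$, no register overflow can ever occur, so the construction works uniformly for every exponentially closed $\alpha$.

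For the ``moreover'' part, given $\gamma<\omega^{\omega}$ in Cantor normal form $\omega^{k_{1}}m_{1}+\cdots+\omega^{k_{r}}m_{r}$, I would concatenate subroutines that run the $\omega^{k_{i}}$-clocker $m_{i}$ times in sequence, for $i=1,\ldots,r$. A short calculation in ordinal arithmetic shows that the finitely many setup steps between consecutive subroutines are absorbed into the larger summand on their right (because $k_{1}>k_{2}>\cdots>k_{r}$), so that the total running time is exactly $\gamma$; the only delicate case is $k_{r}=0$, which is handled by pre-subtracting the cumulative setup overhead from $m_{r}$ (and inserting one extra flag cycle if this would otherwise go negative). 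Since the exponents and coefficients are primitive-recursively readable from any reasonable encoding of the CNF, and the $\omega^{k}$-clockers depend recursively on $k$, the assignment $\gamma\mapsto P_{\gamma}$ is recursive, as claimed.

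To extend to arbitrary $\iota<\omega_{\omega}^{\text{CK}}$, I would exploit the fact that $\alpha$-ITRMs simulate ordinary Turing machines on natural-number data (all registers involved stay below $\omega\le\alpha$). Given a computable well-ordering of $\omega$ of order type $\iota$, one performs a bootstrapped traversal of that well-ordering, scheduling nested flag clocks along the way, exactly as in \cite{CFKMNW}. The main obstacle I anticipate is purely notational: one must argue that no spurious overflow occurs at a limit stage during the traversal. Since all data genuinely stored during the simulation remain natural numbers, this is automatic, and the proof reduces to a faithful replay of the bookkeeping from \cite{CFKMNW}, where the case $\alpha=\omega$ is treated.
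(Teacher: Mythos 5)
Your first part (flags, nested flags, and the Cantor-normal-form concatenation giving a recursive map $\gamma\mapsto P_{\gamma}$ for $\gamma<\omega^{\omega}$) is fine and is exactly what the paper gets by citing Proposition $1$ of \cite{CFKMNW}. The problems are in your extension to arbitrary $\iota<\omega_{\omega}^{\text{CK}}$, where there are two genuine gaps. First, your reduction starts from ``a computable well-ordering of $\omega$ of order type $\iota$''. Such a well-ordering exists only for $\iota<\omega_{1}^{\text{CK}}$, so your traversal cannot reach any ordinal in $[\omega_{1}^{\text{CK}},\omega_{\omega}^{\text{CK}})$, which is most of the range the lemma covers. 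The clockability of these ordinals for ($\omega$-)ITRMs is a substantive theorem of \cite{CFKMNW} and \cite{K1} that uses the full computational strength of ITRMs (codes for the relevant ordinals are computed via truth predicates for the $L_{\omega_{n}^{\text{CK}}}$, not read off a recursive well-order); it cannot be replaced by ``a faithful replay of the bookkeeping''. The paper's proof simply imports this theorem as a black box and then argues that an $\alpha$-ITRM can simulate an $\omega$-ITRM step for step.

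Second, your claim that the absence of spurious overflow ``is automatic'' because all stored data are natural numbers is false, and it conceals precisely the point where the simulation needs an argument. If a register of the simulated $\omega$-ITRM has contents unbounded in $\omega$ cofinally before a limit time, the resetting rule for $\omega$-ITRMs sets it to $0$ there; on an $\alpha$-ITRM with $\alpha>\omega$ the liminf is $\omega<\alpha$, so the register simply contains $\omega$ at that limit time and no reset occurs. The simulated configuration is then wrong. This is why the paper's proof explicitly equips the simulating machine with an extra register initially containing $\omega$, against which register contents can be compared so that a content of $\omega$ can be detected and zeroed out. Without handling this divergence in limit behaviour, your simulation does not reproduce the $\omega$-ITRM computation, and in particular does not preserve halting times.
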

\begin{proof}
	Let $\iota<\omega_{\omega}^{\text{CK}}$.
	By the results of \cite{CFKMNW} and \cite{K1},
	$\iota$ is ITRM-clockable. Clearly, when $\alpha\geq\omega$, then an $\alpha$-ITRM can simulate an ITRM (using an extra register which initially contains $\omega$) without time lapse. Hence $\iota$ is $\alpha$-ITRM-clockable.
	
	The second statement follows similarly from Proposition $1$ of \cite{CFKMNW}, the proof of which clearly yields such a recursive function.
\end{proof}

\begin{thm}{\label{clock implies write}} [Cf. \cite{CFKMNW}, Theorem $7$]
	If $\beta$ is $\alpha$-ITRM-clockable, then $\beta$ is $\alpha$-ITRM-computable. Moreover, the suprema of the $\alpha$-ITRM-clockable and the $\alpha$-ITRM-computable ordinals coincide.
\end{thm}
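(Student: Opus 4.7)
The plan is to adapt the argument of Theorem 7 of \cite{CFKMNW}, with Lemma \ref{appearance time bound} providing the critical combinatorial bound. Let $P$ be an $\alpha$-ITRM-program and $\zeta<\alpha$ such that $P(\zeta)$ halts in exactly $\beta$ steps. By Lemma \ref{appearance time bound}, every configuration of $P(\zeta)$ occurs fewer than $\omega^{\omega}$ many times, so each time $\tau<\beta$ is uniquely described by the pair $(c_{\tau},k_{\tau})$, where $c_{\tau}$ is the configuration at time $\tau$ and $k_{\tau}<\omega^{\omega}$ counts the occurrences of $c_{\tau}$ up to and including $\tau$. Since $\alpha$ is exponentially closed with $\alpha>\omega$, we have $\omega^{\omega}<\alpha$, and the configuration space of $P$ embeds into $\alpha$; hence such pairs are coded by ordinals below $\alpha$ via a fixed ordinal pairing, giving a map $F:\beta\to\alpha$. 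Extending $F$ to a surjection $f:\alpha\to\beta$ (by sending ``invalid'' inputs to $0$), I would put $c_{\beta}:=\{p(\iota,\xi):f(\iota)<f(\xi)\}$ as the candidate $\alpha$-code for $\beta$.

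The bulk of the work is then checking that $c_{\beta}$ is $\alpha$-ITRM-decidable. The decision procedure, given $\rho_{1},\rho_{2}<\alpha$ decoding to pairs $(c_{i},k_{i})$ for $i\in\{1,2\}$, simulates $P(\zeta)$ step by step and increments two auxiliary counters each time the current configuration matches $c_{1}$ or $c_{2}$; it returns the answer determined by which of the $k_{1}$-th occurrence of $c_{1}$ and the $k_{2}$-th occurrence of $c_{2}$ is reached first (with the appropriate bookkeeping for inputs that do not correspond to genuine times). The main obstacle is the behaviour of the counter registers at limit stages of the simulation: monotonicity of the counters together with the bound $k<\omega^{\omega}<\alpha$ rules out overflow, so each register value at any limit is its supremum; and Corollary \ref{limit loops} guarantees that this supremum is actually the correct total count, since a configuration attained cofinally often and at a proper limit would witness looping, contradicting $P(\zeta)\downarrow$. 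Faithful reproduction of $P$'s configuration at the simulator's own limits is arranged by the standard device of dedicated ``canonical'' copy registers for each of $P$'s registers, as in \cite{CFKMNW}.

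For the moreover clause, the inequality $\sup\{\beta:\beta\text{ is $\alpha$-ITRM-clockable}\}\leq\sup\{\beta:\beta\text{ is $\alpha$-ITRM-computable}\}$ is immediate from the first half. For the reverse, given any $\alpha$-ITRM-computable $\gamma$ with code $c\subseteq\alpha$ decidable by a program $Q$, the program $R$ that uses $Q$ to walk through $c$ in its $<_{c}$-ordering one element per step and halts once $c$ has been exhausted clocks an ordinal of size at least $\gamma$; hence $\gamma\leq\sup\{\beta:\beta\text{ is $\alpha$-ITRM-clockable}\}$ for every $\alpha$-ITRM-computable $\gamma$, yielding equality of suprema.
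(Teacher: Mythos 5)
Your proposal is correct and follows essentially the same route as the paper: both code the times of the halting computation $P(\zeta)$ by pairs (configuration, occurrence index), with Lemma \ref{appearance time bound} supplying the bound $\omega^{\omega}$ on occurrence indices, and both obtain the reverse inequality for the suprema by traversing a computable well-order. The only divergence is in the decision procedure for the code: the paper compares the $\iota'$-th and $\iota''$-th occurrences by running, via Lemma \ref{clock low}, programs clocking $\iota'$ and $\iota''$ one step per occurrence of the respective configuration, whereas you count occurrences directly in monotone auxiliary registers and justify their limit behaviour by hand (no overflow since the count stays below $\omega^{\omega}<\alpha$, correctness at limits via Corollary \ref{limit loops}) --- both implementations work, the paper's merely outsourcing the limit analysis to the clocking lemma.
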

\begin{proof}
Let $P$ and $\zeta$ be such that $P(\zeta)$ runs for exactly $\beta$ many steps. Suppose that $P$ uses $n\in\omega$ many registers. Pick some effective bijection $f:\alpha\rightarrow\alpha^{n}\times\omega\times\omega^{\omega}$. An $\alpha$-code $c$ is obtained as follows: We let $p(\iota,\xi)\in c$ if and only if, with $f(\iota)=(a_{1},...,a_{n},j,\iota^{\prime})$ and $f(\xi)=(b_{1},...,b_{n},k,\iota^{\prime\prime})$, the $\iota^{\prime}$-th occurence of the configuration $(a_{1},...,a_{n},j)$ preceeds the $\iota^{\prime\prime}$-th occurence of the configuration $(b_{1},...,b_{n},k)$ in the computation of $P(\zeta)$ (and both occurences exist in this computation). 

This can be computed as follows: By Lemma \ref{clock low}, compute a program $Q_{0}$ that clocks $\iota^{\prime}$ and a program $Q_{1}$ that clocks $\iota^{\prime\prime}$. 
Then run $P(\zeta)$. Whenever $(a_{1},...,a_{n},j)$ occurs, run one step of $Q_{0}$ in some separate registers, and likewise for $(b_{1},...,b_{n},k)$ and $Q_{1}$. If $Q_{0}$ halts before $Q_{1}$ in this computation, return `yes', otherwise (i.e., if the computation of $P(\zeta)$ halts and no such occurences were detected or if $Q_{1}$ halts before $Q_{0}$), return `no'.

Concerning the suprema, the above yields that the supremum of the $\alpha$-ITRM-computable ordinals is not smaller than that of the $\alpha$-ITRM-clockable ordinals. For the converse, just note that, if $\alpha$ is ITRM-singular, then we can run a depth-first search on an $\alpha$-code $c$ for an ordinal $\beta$ to test it for well-foundedness, which takes at least $\beta$ many steps.

\end{proof}

\begin{lemma}{\label{no gaps}} [Cf. \cite{CFKMNW}, Lemma $3$]
	The set of $\alpha$-ITRM-clockable ordinals is downwards closed: That is, if $\xi$ is $\alpha$-ITRM-clockable and $\iota<\xi$, then $\iota$ is also $\alpha$-ITRM-clockable.
\end{lemma}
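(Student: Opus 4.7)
The plan is to construct, for any $\iota < \xi$, an $\alpha$-ITRM-program that clocks $\iota$, starting from a program $P$ with parameter $\zeta$ such that $P(\zeta)$ halts in exactly $\xi$ steps. The key invariant to exploit is that, by Lemma \ref{appearance time bound}, the $P$-configuration $c^{*}$ of $P(\zeta)$ at time $\iota$ appears in the entire computation $P(\zeta)$ at most $\omega^{\omega}$ many times; hence there is an ordinal $k < \omega^{\omega}$ such that time $\iota$ is precisely the $k$th occurrence of $c^{*}$ in this computation. Reducing the problem to ``clocking the $k$th appearance of $c^{*}$ in $P(\zeta)$'' for a known $c^{*}$ and a known $k<\omega^{\omega}$ is the heart of the argument.

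I would then construct the required program $Q$ as follows. The parameter $\zeta$ is retained, and $c^{*}$ is passed as an additional parameter, encoded as a single ordinal $< \alpha$ via Cantor's pairing function (possible since $\alpha$ is exponentially closed and $c^{*}\in\omega\times\alpha^{n}$). Using the enhanced Boolean jump conditions in our modified program syntax, $Q$ can test in a single machine step whether its current configuration matches $c^{*}$. To count occurrences up to $k$, I would attach, as a ``shadow'', a program for clocking $k$ provided by Lemma \ref{clock low}, arranging that one step of the shadow program is executed precisely at those steps where the $P$-simulation's current configuration matches $c^{*}$. Once the shadow clock completes its $k$ steps, $Q$ halts. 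By construction, this happens for the first time exactly when $c^{*}$ has occurred $k$ many times in $P(\zeta)$, i.e.\ at step $\iota$.

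The main obstacle is to ensure that $Q$ halts in \emph{exactly} $\iota$ machine steps, which requires a one-to-one correspondence between steps of $Q$ and steps of the $P$-simulation. This is the precise point at which the enhanced Boolean jump conditions become essential: they permit the configuration-match test, the shadow-clock advancement, and the halting decision to be folded directly into $P$'s original instructions as strengthened jump conditions, without inserting any auxiliary machine steps. Absent this feature, as in the original approach for $\omega$-ITRMs in \cite{CFKMNW}, one would need intricate multi-step bookkeeping to maintain step-count alignment. A secondary, subtler issue is the behaviour at limit stages among the occurrences of $c^{*}$: the shadow counter must take the intended liminf values. This is handled by choosing a representation of $k$ compatible with the liminf rule, e.g.\ one reading off the Cantor normal form of $k$ in finitely many registers, which is possible because $k<\omega^{\omega}<\alpha$ and because Lemma \ref{clock low} supplies such representations uniformly.
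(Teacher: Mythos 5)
Your proposal is correct and follows essentially the same route as the paper's proof: identify the configuration $c$ at time $\iota$, note via Lemma \ref{appearance time bound} that time $\iota$ is its $\gamma$th occurrence for some $\gamma<\omega^{\omega}$, and run a shadow program clocking $\gamma$ (from Lemma \ref{clock low}) one step per occurrence of $c$, halting when the shadow halts. Your additional remarks on exact step alignment via the enhanced Boolean jump conditions correspond precisely to the point the paper itself makes in the introduction about why this modification lets Lemma \ref{no gaps} avoid the bookkeeping of \cite{CFKMNW}.
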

\begin{proof}
Let $P$ be an $\alpha$-ITRM-program and $\zeta$ an ordinal such that $P(\zeta)$ clocks $\xi$. At time $\iota$ in the computation of $P(\zeta)$, some configuration $c$ appears for the $\gamma$-th time. By Theorem \ref{appearance time bound}, we have $\gamma<\omega^\omega$. By Lemma \ref{clock low}, $\gamma$ is $\alpha$-ITRM-clockable, say by the program $Q$. Now run $P(\zeta)$ with $c$ stored in some extra registers and run one step of $Q$ whenever $c$ occurs in that computation. When $Q$ halts, halt. This clocks $\iota$.
\end{proof}

Let us from now on denote by $\beta(\alpha)$ the supremum of the $\alpha$-ITRM-clockable ordinals.

\begin{corollary}{\label{early looping}}
[Cf. \cite{CFKMNW}]	Let $P$ be an $\alpha$-ITRM-program, and let $\zeta<\alpha$. Suppose that $P(\zeta)$ does not halt. Then there are ordinals $\iota<\xi<\beta(\alpha)$ such that $(\iota,\xi)$ witnesses the looping of $P(\zeta)$. Moreover, $\beta(\alpha)$ is minimal with this property.
\end{corollary}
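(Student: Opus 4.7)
The plan is to mimic the corresponding argument of \cite{CFKMNW} for ITRMs, using the tools already developed above. Suppose $P(\zeta)$ does not halt. Since the set of $P$-configurations has cardinality $|\alpha|$ while the non-halting computation passes through all ordinals, pigeonhole applied to the first $|\alpha|^+$ steps yields some configuration appearing $\geq\omega^\omega$ many times. Among such configurations, pick one that is minimal in the componentwise partial order on $\omega\times\alpha^n$ (this order is well-founded on finite tuples of ordinals, so a minimum exists). Call this configuration $c$, let $\tau_\iota$ ($\iota<\omega^\omega$) enumerate the first $\omega^\omega$ occurrences of $c$, and set $\tau:=\sup_{\iota<\omega^\omega}\tau_\iota$.

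Exactly as in the internal claim in the proof of Lemma \ref{appearance time bound}, the minimality of $c$ forces that between times $\tau_0$ and $\tau$ no component of any intermediate configuration drops strictly below the corresponding component of $c$; otherwise a diagonal argument through the $\omega^\omega$ copies of $c$ would produce a strictly smaller configuration that also appears $\omega^\omega$ many times, contradicting the choice of $c$. Combined with the liminf rule and the cofinal occurrence of $c$, the configuration at time $\tau$ is exactly $c$, and hence $(\tau_0,\tau)$ witnesses the looping of $P(\zeta)$.

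To bound $\tau$ below $\beta(\alpha)$, I would construct an $\alpha$-ITRM-program $Q$ (with $\zeta$ and the components of $c$ as parameters) that simulates $P(\zeta)$ step by step on one block of registers while running the $\omega^\omega$-clocker from Lemma \ref{clock low} on a second, disjoint block. Using the generalised jump conditions, $Q$ detects the event ``current $P$-configuration equals $c$'' in a single machine step and advances the clocker precisely on such events; $Q$ halts as soon as the clocker halts. The real running time of $Q$ dominates $\tau$, and being the runtime of a halting $\alpha$-ITRM-program it is an $\alpha$-ITRM-clockable ordinal, hence strictly less than $\beta(\alpha)$ (the supremum $\beta(\alpha)$ is not itself clockable, since any program clocking it could be padded to clock $\beta(\alpha)+1$). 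This yields $\tau<\beta(\alpha)$. The principal obstacle is verifying that pausing the clocker across stretches without $c$-occurrences is compatible with the nested liminfs the clocker performs at its own internal limit stages; this works because the number of past $c$-occurrences is a non-decreasing ordinal-valued function of $Q$'s machine time, whose liminfs at $Q$-limit stages coincide with its left-limits, so the clocker's register state at $Q$-limit stage $\delta$ matches its state at internal time equal to the number of $c$-occurrences strictly before $\delta$.

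For the minimality of $\beta(\alpha)$: given $\beta<\beta(\alpha)$, Lemma \ref{no gaps} together with the definition of $\beta(\alpha)$ makes $\beta$ itself $\alpha$-ITRM-clockable. Take a program $R$ clocking $\beta$ and replace its halt instruction by a single-line jump to itself. Since $R$ halts, Lemma \ref{looping criterion} forbids any pair $(\iota,\xi)$ with $\xi\leq\beta$ from witnessing looping in $R$, and a fortiori in the modified program; however the modified program plainly has $(\beta,\beta+1)$ as a looping witness. Hence for this non-halting program the minimum $\xi$ of any looping witness is $\beta+1>\beta$, so the property fails for $\beta$, establishing the minimality of $\beta(\alpha)$.
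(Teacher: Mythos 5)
Your argument is correct and follows essentially the same route as the paper: extract a strong loop from the proof of Lemma \ref{appearance time bound}, show the loop closes at a clockable time via an interleaved simulation (hence below $\beta(\alpha)$ by downward closure), and obtain minimality by appending a non-halting loop to a program clocking a given $\beta<\beta(\alpha)$. The only difference is in the detector: where you advance an $\omega^{\omega}$-clocker on each occurrence of the minimal configuration $c$ and halt when it halts, the paper stores $c$ in extra registers with a flag that is set when $c$ appears, reset when some component drops below $c$, and triggers halting when $c$ recurs with the flag set, thereby clocking the lexically minimal looping witness directly and without invoking Lemma \ref{clock low}.
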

\begin{proof} 
	Suppose that $P$ uses $n$ registers.
	Since $P(\zeta)$ does not halt and all configurations come from the set $\alpha^{n}\times\omega$, some configuration must occur more than $\omega^\omega$ many times, so Lemma \ref{appearance time bound} implies that the computation will eventually enter a strong loop. Say that $(\iota,\xi)$ is lexically minimal such that $(\iota,\xi)$ witnesses the looping of $P(\zeta)$. Let $c$ be the configuration at time $\iota$. 
	
	We claim that $\xi$ is $\alpha$-ITRM-clockable, which implies $\xi<\beta(\alpha)$, as desired. 
	
	To clock $\xi$, let us run $P(\zeta)$ with $c$ stored in some extra registers. Moreover, we use a further `index' register $R$, which initially contains $0$. Whenever $c$ is assumed during the computation of $P(\zeta)$, the content of $R$ is changed to $1$. When a further configuration is $<c$ in any component, the content of $R$ is changed back to $0$. On the other hand, if $c$ reoccurs with $R$ containing $1$, we halt. 
	
	Minimality is clear, as we can simply run a non-halting computation after a halting computation.
\end{proof}

For the next result, it will be important that one can `read out' ordinals from codes for ordinals. More specifically:

\begin{lemma}{\label{read out codes}}
Let $c\subseteq\alpha$ be an $\alpha$-code for an ordinal $\gamma<\alpha$ obtained from a program $C$ clocking $\gamma$ (possibly with parameters) as described in the proof of Theorem \ref{clock implies write}. Then there are $\alpha$-ITRM-programs $P$ and $Q$ such that:

\begin{enumerate}
	\item For any $\iota<\gamma$, $P(\iota)$ halts and outputs an ordinal $\xi<\alpha$ such that $\xi$ represents $\iota$ in $c$.
	\item For any $\iota<\alpha$, $Q(\iota)$ halts and outputs an ordinal $\xi<\alpha$ such that $\iota$ represents $\xi$ in $c$.
\end{enumerate}
\end{lemma}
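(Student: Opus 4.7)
The plan is to have both $P$ and $Q$ execute a direct step-by-step simulation of the clocking computation $C(\zeta)$ in a reserved block of registers, augmented by two auxiliary counters: a \emph{time counter} $T$, incremented once per simulated step starting from $0$, and an \emph{occurrence counter} $M$ for a fixed configuration. By the liminf rule and monotonicity, $T$ contains the true simulated time at every ordinal moment, so the test $T=\iota$ against a stored copy of the input fires precisely at simulated time $\iota$; similarly $M$ correctly tracks the running occurrence count, which by Lemma \ref{appearance time bound} always stays $<\omega^{\omega}<\alpha$ and therefore fits in a single $\alpha$-register.

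For $Q$ I would, on input $\iota<\alpha$, first apply the $\alpha$-ITRM-computable decoding of $f$ to produce $(a_{1},\ldots,a_{n},j,m)$ and copy these into fixed registers. Then start simulating $C(\zeta)$ with counters $T$ and $M$; at each simulated step, compare the simulated configuration to $(a_{1},\ldots,a_{n},j)$ and increment $M$ on a match. As soon as $M=m$, halt and return $T$; if the simulated $C(\zeta)$ halts first, return $0$ as a default (which covers the case where $\iota$ does not represent an actual time).

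For $P$, on input $\iota<\gamma$, I would first run the simulation with $T$ alone, halt it when $T=\iota$, and copy the current simulated configuration into safe registers as $(a_{1},\ldots,a_{n},j)$. Then I restart the simulation with a fresh $M$ counting occurrences of this stored configuration, halting again when $T=\iota$; at that moment $M$ holds the correct occurrence number $m$, and I output the $\alpha$-ITRM-computable value $f^{-1}(a_{1},\ldots,a_{n},j,m)$.

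The main obstacle is faithfulness of the augmented simulation at limit stages: I need the simulator's register block that tracks $C$'s state to take the same component-wise liminfs that $C(\zeta)$ itself does. The standard workaround is to structure the simulator so that every simulated step begins and ends at one designated program line with all scratch registers zero; then at a limit simulator time $\lambda$ the program line liminf returns to this home line, the scratch registers liminf to $0$, and the persistent ``simulated state'' registers liminf in synchrony with the corresponding registers of $C$. The extra monotone counters $T,M$ and the fixed storage registers holding $(a_{1},\ldots,a_{n},j,m)$ are inert under this synchrony, so adjoining them preserves the faithful limit behaviour, and the comparisons $T=\iota$ and $M=m$ fire at the correct ordinal moments.
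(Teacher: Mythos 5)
Your construction of $P$ is essentially the paper's: run $C$ alongside a time counter to extract the configuration $d$ at simulated time $\iota$, rerun counting occurrences of $d$ up to time $\iota$, and output the tuple code of $(d,\delta)$. For $Q$ you diverge: the paper simply searches upward through $\gamma$, applying $P$ to each $\iota<\gamma$ until the output equals the given code, whereas you decode the input into a configuration--occurrence pair $(a_{1},\dots,a_{n},j,m)$ and run a single forward simulation that halts at the $m$-th occurrence. Your version is more direct and avoids the nested re-runs of $P$; the paper's version is trivially correct once (1) is established. Your extra care about faithfulness of the augmented simulation at limit stages (home line, zeroed scratch registers) is more explicit than the paper, which just runs $C$ ``along with'' the counter. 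One point to tighten in your $Q$: for limit $m$, the test ``$M=m$'' can first become true at a limit simulator time where $M$ reaches $m$ as a supremum of earlier counts, which (by Corollary \ref{limit loops}, since $C$ halts) is strictly before the $m$-th occurrence of the configuration actually takes place; you should therefore conjoin the halting test with ``current simulated configuration equals $(a_{1},\dots,a_{n},j)$'' (and fix the increment/check order consistently with the indexing of occurrences used in defining $c$). With that adjustment the argument goes through.
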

\begin{proof}
First, observe that, since $\gamma<\alpha$, no configuration can occur in the computation of $C$ more than $\alpha$ many times.
\begin{enumerate}
\item The program in question works as follows: Let $\rho<\gamma$ be given in some input register. Run $C$. Along with $C$, count upwards in a separate register until $\rho$ is reached. At this point, we have determined the $\rho$-th configuration in the computation of $C$, say $d$. Now run $C$ again for $\rho$ many steps in this way, this time counting upwards in some register whenever the configuration $d$ occurs during the computation. Let $\delta$ be the content of that register when $C$ reaches its $\rho$th step. (In particular, we have $\delta<\omega^{\omega}$ and $\delta<\gamma<\alpha$.) Now, knowing $d$ and $\delta$, we can compute the ordinal representing $\rho$ in the sense of $c$ by simply computing the tuple code.
\item Let $\rho$ be given in an input register; we want to determine the ordinal $\xi<\gamma$ represented by $\rho$ in $c$. To this end, count upwards in some register up to $\gamma$. For every $\iota<\gamma$, uses the program from (1) to compute the ordinal $\xi(\iota)$ represented by $\iota$ in $c$. At some point, we will have $\xi(\iota)=\rho$; when this happens, output $\iota$.
\end{enumerate}

\end{proof}

\begin{thm}{\label{strength and L}}
	For every exponentially closed and ITRM-singular ordinal $\alpha$ 
	and every $x\subseteq\alpha$, $x$ is $\alpha$-ITRM-computable if and only if $x\in L_{\beta(\alpha)}$. 
\end{thm}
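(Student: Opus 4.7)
The proof has two directions. Both hinge on Corollary \ref{early looping}, which says that any $\alpha$-ITRM-computation either halts in $<\beta(\alpha)$ many steps or has already entered a loop by time $\beta(\alpha)$; combined with Theorem \ref{clock implies write}, $\beta(\alpha)$ is simultaneously the supremum of the clockable and the computable ordinals, hence the natural horizon for computability.

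For the direction ``$x$ computable $\Rightarrow x\in L_{\beta(\alpha)}$'', let $P$ be a $k$-register program deciding $x$ from parameter $\zeta<\alpha$ and let $\tau(\iota)$ be the halting time of $P(\iota,\zeta)$. The decisive step is to establish a uniform bound $\tau^{*}:=\sup_{\iota<\alpha}\tau(\iota)<\beta(\alpha)$. To get it, I would design an auxiliary program $Q$ which, on input $\zeta$, iterates $\iota$ over $\alpha$---using a continuous, ITRM-computable, cofinal function $g:\gamma\to\alpha$ supplied by Proposition \ref{continuous itrm-singularization} so that the outer counter is well-behaved under the liminf rule---and for each $\iota$ simulates $P(\iota,\zeta)$ until it halts before moving on. Since by hypothesis every $P(\iota,\zeta)$ halts, $Q(\zeta)$ halts as well, and Corollary \ref{early looping} gives that its running time $T_{Q}$ is strictly below $\beta(\alpha)$. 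As each $\tau(\iota)$ appears as a subcomputation inside $Q(\zeta)$, we obtain $\tau^{*}\leq T_{Q}<\beta(\alpha)$. The function $\iota\mapsto P(\iota,\zeta)\!\downarrow$ is then $\Delta_{1}$ over $L_{\tau^{*}+\omega}$ via the usual computation-trace description, so $x\in L_{\tau^{*}+\omega+1}$; since $\beta(\alpha)$ is a limit of clockable ordinals and clockability is closed under $+\omega+1$ (by appending a standard $\omega$-flag after clocking), we have $\tau^{*}+\omega+1<\beta(\alpha)$, i.e.\ $x\in L_{\beta(\alpha)}$.

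For the converse, fix some $\gamma<\beta(\alpha)$ with $x\in L_{\gamma}$; by Theorem \ref{clock implies write} and Lemma \ref{no gaps}, I may choose a clockable $\gamma'>\gamma$ and an $\alpha$-ITRM-computable $\alpha$-code $d$ for $\gamma'$, together with the translation programs from Lemma \ref{read out codes}. Starting from the $\Delta_{1}$ code of $L_{\alpha}$ provided by Lemma \ref{koepke seyfferth}, I would implicitly build an $\alpha$-code $c$ for $L_{\gamma'}$ by transfinite recursion on $\iota\leq\gamma'$ whose bookkeeping is driven by $d$: at successor stages, new subsets of $L_{\iota}$ are enumerated by formula/parameter pairs whose truth in $L_{\iota}$ is evaluated by an adapted version of the stack procedure of Lemma \ref{singular truth predicate evaluation}, now relative to the already constructed code for $L_{\iota}$ (the trick there relies only on ITRM-singularity of the quantifier domain $\alpha$, not on any property of $\iota$ itself); at limit stages, $c$ is the union of the codes at earlier stages, again accessible through $d$. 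Once membership in $c$ is decidable, I search for an index $\eta<\alpha$ whose decoded set agrees with the characteristic function of $x$; then $\iota\in x$ can be answered by asking whether $p(\iota',\eta)\in c$, where $\iota'$ is the $c$-representative of $\iota$ supplied by Lemma \ref{read out codes}.

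The principal obstacle is the converse: the code $c$ is a subset of $\alpha$ and therefore cannot be stored in finitely many $\alpha$-bounded registers, so the recursion above must be organised as an ``on demand'' procedure in which each membership query ``$p(\eta,\xi)\in c$?'' triggers a bounded but possibly deeply nested subcomputation that re-derives the relevant fragment of $c$ from $d$, from Lemma \ref{koepke seyfferth}, and from repeated invocations of the parametrised truth-predicate program. Verifying that this nested recursion halts uniformly within the horizon $\beta(\alpha)$ and that the truth-evaluation step survives the transition from $L_{\alpha}$ to higher levels $L_{\iota}$ is the technical heart of the proof; the forward direction carries a smaller analogous difficulty in making the outer iteration of $Q$ robust under the liminf rule at limit stages, which is precisely why the continuity of $g$ in Proposition \ref{continuous itrm-singularization} is exploited.
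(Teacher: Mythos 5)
Your proposal is correct and follows essentially the same route as the paper: the forward direction bounds all halting times of $P(\cdot,\zeta)$ by clocking a single master run through all inputs (the paper simply counts a register up to overflow rather than using the singularizing function of Proposition \ref{continuous itrm-singularization}, but this is cosmetic), and the converse computes an $\alpha$-code for $L_{\gamma}$ from names/triples via the truth-predicate machinery of Lemma \ref{singular truth predicate evaluation} and Lemma \ref{read out codes} and then reads $x$ off that code. Your explicit use of the computable code $d$ for $\gamma'$ to drive the level-by-level recursion is, if anything, slightly more careful than the paper's sketch on the same point.
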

\begin{proof}
           Suppose that $x\subseteq\alpha$ is $\alpha$-ITRM-computable. Thus, there are an $\alpha$-ITRM-program $P$ and an ordinal $\zeta$ such that, for any $\iota<\alpha$, we have $P(\iota,\zeta)\downarrow=1$ if and only if $\iota\in x$ and otherwise $P(\iota,\zeta)\downarrow=0$. In particular, $P(\iota,\zeta)$ halts for every $\iota<\alpha$.
          Now consider the program $Q$ that counts upwards in some register $R$ starting with $0$ and runs $P(\iota,\zeta)$ for every $\iota$ appearing in that register until $R$ overflows. This program will terminate, say in $\gamma$ many steps. In particular, we will have $\gamma<\beta(\alpha)$ and $\gamma$ will be larger than the halting time of $P(\iota,\zeta)$ for 
          each $\iota<\alpha$. So $x$ is definable over $L_{\gamma}$, thus $x\in L_{\gamma+1}\subseteq L_{\beta(\alpha)}$.

           It remains to show that, if an ordinal $\gamma$ is $\alpha$-ITRM-computable and $x\in\mathfrak{P}(\alpha)\cap L_{\gamma}$, 
	then $x$ is $\alpha$-ITRM-computable. 
	
	We sketch the construction, which is based on the way Koepke et al. used to show that Ordinal Register Machines (ORMs) compute all constructible sets of ordinals.
	
          Elements of $L_{\gamma}$ can be `named' by triples of the form $(\beta,\phi,\xi)$, 
	where $\xi<\beta<\alpha$ are ordinals and $\phi$ is an $\in$-formula. 
	Here, $(\beta,\phi,\xi)$ will represent the set 
	$\{x\in L_{\beta}:L_{\beta}\models\phi(\xi)\}$.
	Clearly (by exponential closure of $\alpha$), names can be encoded as ordinals in $\alpha$ in way that allows us to 
	compute codes from their components and components from codes on an $\alpha$-ITRM. 
	Let us fix such an encoding $f:\alpha\rightarrow\alpha\times\omega\times\alpha$.
	
	We can now use a recursive truth predicate algorithm in the spirit of Lemma \ref{singular truth predicate evaluation} to determine, for codes $\iota$ and $\xi$, whether or $f(\iota)\in f(\xi)$. We then obtain an $\alpha$-code for $L_{\gamma}$ as the set of  $p(\iota,\xi)$ such that $f(\iota)\in f(\xi)$.
	
	From this code, we can read out all elements of $L_{\gamma}$ using Lemma \ref{read out codes}.

	
	

\end{proof}

Thus, in contrast to Theorem \ref{left to right}, which spoils the general picture of ordinal computability that `computational strength corresponds to the $L$-level indexed by the supremum of the clockable ordinals' which holds for all models studied so far (including wITRMs (\cite{K}), ITRMs (\cite{}, \cite{K1}), ITTMs (\cite{HL}), $\alpha$-ITTMs (\cite{K1}, \cite{C}, \cite{alpha itrms}), ORMs (\cite{ORM}) and OTMs (\cite{OTM}) with and without parameters, the `hypermachines' of Friedman and Welch (\cite{FW}) and infinite time Blum-Shub-Smale machines (\cite{KS1},\cite{KM}), for ITRM-singular $\alpha$, all is well again.\footnote{We note that this is not the only place in ordinal computability where this happens: In fact, for Ordinal Turing Machines (OTMs), let $\rho$ be the supremum of the ordinals with eventually OTM-writable real codes and let $\eta$ be the supremum of the stabilization times for real numbers. Since $\rho$ is a supremum of a constructibly countable set of constructibly countable ordinals, we have $\rho<\omega_{1}^{L}$; on the other hand, J. Hamkins observed (see \cite{HMO}) that there are stabilization times way over $\omega_{1}^{L}$, so that $\eta>\omega_{1}^{L}$; in particular, $\eta$ is way larger than $\rho$.}

\subsection{$(\alpha,\beta)$-ITRMs}

In \cite{K1}, Koepke defined $(\alpha,\beta)$-(w)ITRMs, which are $\alpha$-(w)ITRMs with time restricted to ordinals $<\beta$. 

\begin{defini}{\label{alpha beta comp def}}
	A set $x\subseteq\alpha$ is $(\alpha,\beta)$-(w)ITRM-computable if and only if there are an $\alpha$-(w)ITRM-program $P$ and some $\xi,\rho<\alpha$ such that, for all $\iota<\alpha$, $P(\iota,\xi)$ halts in $<\rho$ many steps with output $\chi_{x}(\iota)$.
	
	We denote the set of $(\alpha,\beta)$-ITRM-computable subsets of $\alpha$ by COMP$^{\text{ITRM}}_{(\alpha,\beta)}$ and the set of $(\alpha,\beta)$-wITRM-computable subsets of $\alpha$ by COMP$^{\text{wITRM}}_{(\alpha,\rho)}$.
\end{defini}

Combining the above proof with a more careful analysis of running times, we obtain the computational strength of $(\alpha,\beta)$-ITRMs, 
 partially\footnote{Note that, as exponential closure is a rather weak requirement on running times, this can be regarded as the major part of the question, possibly covering all interesting cases.} answering a question of Koepke, see \cite{K1}, p. 6.

\begin{thm}{\label{alpha beta ITRM}}
	Let $\rho\in(\alpha,\beta(\alpha)]$ be exponentially closed. Then COMP$^{\text{ITRM}}_{(\alpha,\rho)}=L_{\rho}\cap\mathfrak{P}(\alpha)$.
\end{thm}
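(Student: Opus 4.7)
\textbf{Plan for the proof of Theorem~\ref{alpha beta ITRM}.}

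The argument splits into two inclusions. For the direction $\mathrm{COMP}^{\mathrm{ITRM}}_{(\alpha,\rho)}\subseteq L_{\rho}\cap\mathfrak{P}(\alpha)$, let $x\subseteq\alpha$ be decided by an $\alpha$-ITRM-program $P$ with $n$ registers and parameter $\xi<\alpha$ uniformly halting in $<\rho_{0}$ many steps, as in Definition~\ref{alpha beta comp def}, where $\rho_{0}<\rho$. Since $\rho$ is exponentially closed, the ordinal $\delta:=\max(\alpha,\rho_{0})+\omega$ also lies below $\rho$. Every complete $P(\iota,\xi)$-computation is a function with domain some ordinal $\leq\rho_{0}$ and range in $\omega\times\alpha^{n}$, hence an element of $L_{\delta}$. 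The predicate ``$\sigma$ is a valid halting $P(\iota,\xi)$-computation with output $1$'' is $\Delta_{0}$ in $\sigma$, since the successor and liminf rules of an $\alpha$-ITRM translate into bounded quantifier conditions on initial segments of $\sigma$. Thus $x$ is $\Sigma_{1}$-definable over $L_{\delta}$ in the parameters $\alpha,P,\xi$, so $x\in L_{\delta+1}\subseteq L_{\rho}$.

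For the reverse inclusion, let $x\in L_{\rho}\cap\mathfrak{P}(\alpha)$ and pick $\gamma$ with $\alpha<\gamma<\rho$ such that $x\in L_{\gamma}$, together with a formula $\phi$ and parameter $p\in L_{\gamma}$ satisfying $\iota\in x\iff L_{\gamma}\models\phi(\iota,p)$. Since $\gamma<\rho\leq\beta(\alpha)$, Lemma~\ref{no gaps} makes $\gamma$ $\alpha$-ITRM-clockable, and Theorem~\ref{clock implies write} then supplies an $\alpha$-ITRM program producing an $\alpha$-code for $\gamma$. Following the construction in the proof of Theorem~\ref{strength and L}, I would extend the ordinal-indexed name scheme for $L_{\alpha}$ to cover levels up to $\gamma$ and run the truth-predicate procedure of Lemma~\ref{singular truth predicate evaluation}---which uses the singularizing function from Definition~\ref{singular} to steer nested stack searches past liminfs via Lemma~\ref{limit preservation}---relativized to $L_{\gamma}$. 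This yields an $\alpha$-ITRM program $Q$ and parameter $\zeta<\alpha$ with $Q(\iota,\zeta)\downarrow=\chi_{x}(\iota)$ for all $\iota<\alpha$.

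The principal obstacle is verifying that $Q$ halts uniformly in time $<\rho$. The cost of $Q(\iota,\zeta)$ decomposes into (i) the time to build the code for $L_{\gamma}$ from the $\alpha$-code for $\gamma$, and (ii) the time to evaluate $\phi$ against this code. Each quantifier block in $\phi$ becomes a nested loop whose length is governed by the singularizing function into $\alpha$, with at most $\alpha$ iterations at each level; step (i) runs in time bounded by a fixed ordinal-exponential expression in $\gamma$. Combining these, the total running time of $Q(\iota,\zeta)$ is bounded by an expression of the form $\gamma^{k}$ for some $k\in\omega$ depending only on $\phi$, and since $\gamma<\rho$ with $\rho$ exponentially closed, $\gamma^{k}<\rho$. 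Hence $x\in\mathrm{COMP}^{\mathrm{ITRM}}_{(\alpha,\rho)}$, completing the equivalence.
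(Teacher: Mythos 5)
Your proposal follows essentially the same route as the paper: the easy inclusion via placing the (uniformly time-bounded) computations inside $L_{\rho}$, and the converse by clocking some $\gamma<\rho$ with $x\in L_{\gamma}$, computing an $\alpha$-code for $\gamma$ via Theorem~\ref{clock implies write}, running the truth-predicate evaluation over (a code for) $L_{\gamma}$, and using exponential closure of $\rho$ to keep the running time below $\rho$. The paper states the time bound as ``below the next exponentially closed ordinal after $\max\{\alpha,\delta,\zeta\}$'' rather than your $\gamma^{k}$, but both are justified the same way and your argument is correct.
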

\begin{proof}
Clearly, if $x$ is computable by an $\alpha$-ITRM with time bounded by $\gamma<\rho$, then $x\in L_{\gamma+\omega}\subseteq L_{\rho}$.

On the other hand, suppose that $x\in L_{\rho}$. Thus, there is $\gamma<\rho$ such that $x\in L_{\gamma}$. As $\gamma<\rho<\beta(\alpha)$, $\gamma$ is $\alpha$-ITRM-clockable.
By the proof of Theorem \ref{clock implies write}, a code for $\gamma$ is $\alpha$-ITRM-computable with time bound $\gamma\cdot 2$.

Now, $x$ is of the form $\{\iota<\alpha:L_{\delta}\models\phi(\iota,\zeta)\}$ for some $\delta<\gamma$, some ordinal parameter $\zeta<\delta$ and some $\in$-formula $\phi$. Thus, $x$ is `named' by the tripel $(\delta,\phi,\zeta)$.

To determine whether $\iota\in x$ for some $\iota<\alpha$, one now again uses the bounded truth predicate algorithm for evaluating whether $L_{\delta}\models\phi(\iota,\zeta)$. It is now easy to see (see, e.g., \cite{KS1}, p. 314) that the running time of this algorith will be bounded below the next exponentially closed ordinal after $\text{max}\{\alpha,\delta,\zeta\}$, and thus in particular below $\rho$, as required. Thus $x\in$COMP$^{\text{ITRM}}_{(\alpha,\beta)}$. 
\end{proof}

\subsection{Properties of $\beta(\alpha)$}

What remains to be done in order to determine the computational strength of $\alpha$-ITRMs when $L_{\alpha}\not\models$ZF$^{-}$ is to determine $\beta(\alpha)$. In this section, we will give upper bounds and, in some special cases, lower bounds on the value of $\beta(\alpha)$; moreover, we will prove some properties of $\beta(\alpha)$.

Our first observation is that Corollary \ref{limit loops} yields $\Pi_{3}$-reflecting ordinals as bounds on halting times (we denote by COMP$^{\text{wITRM}}_{\alpha}$ the set of $\alpha$-wITRM-decidable subsets of $\alpha$); we remark that $\Pi_{3}$-reflecting ordinals as bounds on halting times for a strengthened type of Infinite Time Blum-Shub-Smale-machines working on natural or rational numbers are mentioned by Welch in \cite{W1}; the proof below probably bears some similarity to his (unpublished) argument, which is not known to us. We received a further hint at considering $\Pi_{3}$-reflecting ordinals from \cite{M}, according to which the first $\Pi_{3}$-reflecting ordinal is way smaller than the first $\Sigma_{2}$-admissible ordinal. As every $\Sigma_{2}$-admissible is $\Pi_{3}$-reflecting but not vice versa, part (1) of the following theorem improves Theorem \ref{sigma2 admissible wITRM strength} from the introduction.

\begin{thm}{\label{pi3 reflecting}}

We have the following statements:

\begin{enumerate}
		\item If $\alpha$ is $\Pi_{3}$-reflecting, then COMP$^{\text{wITRM}}_{\alpha}=\Delta_{1}(L_{\alpha})\cap\mathfrak{P}(\alpha)$.
		\item If $\gamma$ is the smallest $\Pi_{3}$-reflecting ordinal $>\alpha$, then $\beta(\alpha)<\gamma$.
	\end{enumerate}
\end{thm}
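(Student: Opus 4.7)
Both parts reduce to a halting-time bound obtained via $\Pi_{3}$-reflection. For the easy inclusion $\Delta_{1}(L_{\alpha})\cap\mathfrak{P}(\alpha)\subseteq\text{COMP}^{\text{wITRM}}_{\alpha}$ in (1), I would use the $\alpha$-wITRM-computable code for $L_{\alpha}$ from Lemma \ref{koepke seyfferth} and exhaustive searches over $\alpha$, with the wITRM's crash-on-overflow as a termination signal; $\Delta_{1}$-ness guarantees that one of the two $\Sigma_{1}$-searches (for a witness of the defining formula or of its negation) succeeds before the search register overflows.

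The reverse inclusion in (1) and all of (2) reduce to halting-time bounds: in (1), halting times of $\alpha$-wITRM-programs that halt on all inputs are uniformly bounded below $\alpha$; in (2), individual halting times of $\alpha$-ITRM-programs are bounded below $\gamma$, and then $\beta(\alpha)<\gamma$ follows since $\Pi_{3}$-reflecting ordinals are $\Sigma_{1}$-admissible, so by $\Sigma_{1}$-replacement the halting-time function on inputs below $\alpha$ has bounded image in $\gamma$ (and taking a supremum over the countably many programs preserves the bound).

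For the core bound, suppose toward contradiction that some $P(\zeta)$ halts at time $\tau\geq\alpha$ in case (1), respectively $\tau\geq\gamma$ in case (2). In $L_{\alpha}$ (resp.~$L_{\gamma}$) the partial computation is non-halting, non-crashing, and, by Lemma \ref{looping criterion}, non-looping (any loop witnessed below would prevent halting in $V$). I would capture this ``eternally running, no pathology'' behavior by a $\Pi_{3}$-formula
\[
\phi(P,\zeta)\;\equiv\;\forall\delta\,\exists\,(c_{\xi})_{\xi\leq\delta}\,\forall\xi\leq\delta\;\psi\bigl((c_{\eta})_{\eta\leq\xi}\bigr),
\]
where $\psi$ is the $\Delta_{0}$-condition asserting that the sequence up to $\xi$ is a legal partial $P(\zeta)$-computation which has not halted, crashed, or participated in a loop witness. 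By $\Pi_{3}$-reflection, $\phi$ reflects to some $\beta<\alpha$ (resp.~$\beta\in(\alpha,\gamma)$, where $\beta>\alpha$ is forced by requiring $\alpha$ among the parameters). By absoluteness of partial computations, the actual $P(\zeta)$-computation in $V$ then extends through all times $<\beta$ without halting, crashing, or looping.

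The main obstacle will be converting this reflected information into an outright contradiction, since a priori it is consistent with $P(\zeta)$ halting at some time in $[\beta,\tau]$. My plan is to leverage the minimality of $\gamma$ in (2): $\beta$ is not itself $\Pi_{3}$-reflecting, so some carefully chosen $\Pi_{3}$-sentence $\chi$ that is true in $L_{\beta}$ fails to reflect, and combined with the combinatorial constraints from Lemma \ref{appearance time bound} and Corollary \ref{limit loops} this should force either a genuine loop or a halting configuration in the initial segment of the reflected computation, contradicting $L_{\beta}\models\phi$. Case (1) is analogous, producing instead a specific input on which $P$ provably fails to halt in $V$, against the standing hypothesis. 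Identifying the right auxiliary sentence $\chi$, or the right strengthening of $\phi$ to make the reflection self-contradictory, will be the technical heart of the argument.
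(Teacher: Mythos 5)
Your overall reduction is the right one (bound the halting times, then in (1) invoke the Koepke--Seyfferth characterization and in (2) pass from individual bounds to a bound on $\beta(\alpha)$), but the core step is genuinely missing, and you say so yourself: the $\Pi_{3}$-formula you choose to reflect -- ``for every $\delta$ there is a legal, non-halted, non-crashed, non-looping partial computation of length $\delta$'' -- is true in $L_{\bar\alpha}$ for \emph{every} limit $\bar\alpha$ below the halting time, so reflecting it yields no information and, as you observe, leaves open that $P(\zeta)$ halts somewhere in $[\beta,\tau]$. No auxiliary sentence $\chi$ exploiting non-reflection at $\beta$ is needed or used in the paper.

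The missing idea is to reflect a different $\Pi_{3}$-statement: the one that \emph{describes the configuration $c$ reached at the limit time $\alpha$ via the liminf-rule}. Concretely, ``register $i$ contains $\rho_{i}$ at time $\alpha$'' unfolds into (i) for every $\iota$ there is a later time at which the content is $\leq\rho_{i}$, and (ii) for every $\iota<\rho_{i}$ there is a time after which the content stays $>\iota$; this conjunction (together with the analogous clause for the program line) is $\Pi_{3}$ over $L_{\alpha}$, and it remains $\Pi_{3}$ when one additionally requires the witnessing times to lie above an arbitrary parameter $\delta<\alpha$. Reflecting it for each $\delta$ produces $\bar\alpha\in(\delta,\alpha)$ at which the configuration is again $c$, so $c$ occurs unboundedly often below $\alpha$; Corollary \ref{limit loops} then gives a strong loop, contradicting halting. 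This is what forces every halting $\alpha$-wITRM-computation to halt before time $\alpha$, after which Lemma \ref{koepke seyfferth} finishes (1). For (2) you would additionally have to handle register resets (a component equal to $0$ at a limit may be due to an overflow, which changes the $\Pi_{3}$-description), which your proposal does not address; the paper then concludes $\beta(\alpha)<\gamma$ by reflecting the $\Pi_{2}$-statement ``every $P_{k}(\zeta)$ either halts or enters a strong loop'' from $L_{\gamma}$ down to some $\bar\gamma<\gamma$, whereas your alternative via admissibility of $\gamma$ and $\Sigma_{1}$-replacement applied to the halting-time function would also work once the individual bounds are in place.
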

\begin{proof}
(1) Suppose that $\alpha$ is $\Pi_{3}$-reflecting, and let $P$ be an $\alpha$-wITRM-program, $\zeta<\alpha$. Suppose that $P(\zeta)$ does not halt before time $\alpha$, and let $c=(l,\rho_{1},...,\rho_{n})$ be the configuration of $P(\zeta)$ at time $\alpha$. We will show that $c$ has appeared unboundedly often before time $\alpha$, from which it follows by Corollary \ref{limit loops} that $P(\zeta)$ does not halt at all.

That the $i$th register contains $\rho_{i}$ at time $\alpha$ means that (i) for every $\iota<\alpha$, there is $\xi\in(\iota,\alpha)$ such that, at time $\xi$, the content of the $i$th register was at most $\rho_{i}$ and (ii) that, for every $\iota<\rho_{i}$, there is $\xi<\alpha$ such that, for all $\gamma\in(\xi,\alpha)$, the $i$-th register had a content $>\iota$ at time $\gamma$. This is clearly a $\Pi_{3}$-statement that holds in $L_{\alpha}$. Since $\alpha$ is $\Pi_{3}$-reflecting, it holds at some earlier ordinal $\bar{\alpha}$. The same is true when one additionally demands in both clauses that $\xi$ is larger than some given bound $\delta<\alpha$. The active program line $l$ can be dealt with in the same way (it can be regarded as another register that only contains natural numbers below a given bound). Thus, for every $\delta<\alpha$, there is a time $\tau\in(\delta,\alpha)$ such that $c$ appeared at time $\tau$, i.e. $c$ appeared unboundedly often before time $\alpha$.

However, by Lemma \ref{koepke seyfferth}, 
one sees that register machines with time and and space bound $\alpha$ compute exactly those subsets of $\alpha$ contained in $\Delta_{1}(L_{\alpha})\cap\mathfrak{P}(\alpha)$.
\bigskip

(2) The proof that $\beta(\alpha)\leq\gamma$ works by a similar argument; the only modification is that, when some $\rho_{i}$ is equal to $0$, one needs to distinguish whether or not this is due to an overflow when expressing this as a $\Pi_{3}$-formula. Now, the statement that, for all $k\in\omega$ and all $\zeta\in\alpha$, there is either a halting computation of $P_{k}(\zeta)$ or a partial computation with a strong loop is $\Pi_{2}$ and holds in $L_{\gamma}$, as it holds in $L_{\beta(\alpha)}$ and $\gamma\geq\beta(\alpha)$. Since $\gamma$ is $\Pi_{3}$-reflecting, it follows that there is $\bar{\gamma}<\gamma$ such that the same holds in $L_{\bar{\gamma}}$. Thus $\beta(\alpha)\leq\bar{\gamma}<\gamma$.
\end{proof}

As a consequence of the last part of the proof of Theorem \ref{pi3 reflecting}, we obtain:

\begin{corollary}{\label{no pi2 reflection}}
$\beta(\alpha)$ is not $\Pi_{2}$-reflecting. In particular, $\beta(\alpha)$ is not admissible.
\end{corollary}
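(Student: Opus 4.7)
The plan is to exhibit a $\Pi_{2}$ property, with $\alpha$ as a parameter, that pins down $\beta(\alpha)$ so tightly that its reflection to a smaller level contradicts the very definition of $\beta(\alpha)$.

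Concretely, I would consider the sentence $\phi(\alpha)$ stating: ``for every $\alpha$-ITRM-program index $k\in\omega$ and every $\zeta<\alpha$ there is an object $c$ that either codes a halting $P_{k}(\zeta)$-computation, or codes a partial $P_{k}(\zeta)$-computation of some length $\xi$ together with an ordinal $\iota<\xi$ such that $(\iota,\xi)$ witnesses the looping of $P_{k}(\zeta)$''. Once $\alpha$, $k$, $\zeta$, $c$ are fixed, the condition on $c$ is absolute between transitive sets containing these data, since it only asserts the successor-step rule, the liminf-rule, halting, and pointwise $\geq$-comparison along the explicit sequence $c$; all quantifiers involved are bounded by the length of $c$. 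Hence only two bounded universal quantifiers (over $\omega$ and $\alpha$) and one unbounded existential quantifier (over $c$) remain, and $\phi(\alpha)$ is genuinely $\Pi_{2}$ in the parameter $\alpha$.

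By Corollary \ref{early looping}, every $\alpha$-ITRM-computation either halts strictly before $\beta(\alpha)$ or enters a strong loop at some pair $(\iota,\xi)$ with $\xi<\beta(\alpha)$; in both cases an appropriate witness $c$ is already an element of $L_{\beta(\alpha)}$. Hence $L_{\beta(\alpha)}\models\phi(\alpha)$. Now suppose for contradiction that $\beta(\alpha)$ were $\Pi_{2}$-reflecting. Applying reflection to $\phi$ with parameter $\alpha\in L_{\beta(\alpha)}$ produces $\bar{\beta}$ with $\alpha<\bar{\beta}<\beta(\alpha)$ such that $L_{\bar{\beta}}\models\phi(\alpha)$. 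But then every halting $\alpha$-ITRM-computation has a witnessing computation sequence inside $L_{\bar{\beta}}$, hence halts in strictly fewer than $\bar{\beta}$ steps. Consequently every $\alpha$-ITRM-clockable ordinal is $<\bar{\beta}$, so by definition of $\beta(\alpha)$ as the supremum of the clockable ordinals we have $\beta(\alpha)\leq\bar{\beta}$, contradicting $\bar{\beta}<\beta(\alpha)$.

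For the second assertion, I would invoke the standard characterization (Aczel--Richter) that an ordinal is admissible if and only if it is $\Pi_{2}$-reflecting; combined with the first part, this immediately yields that $\beta(\alpha)$ is not admissible. The only delicate point in the above is the bookkeeping of complexity: one must verify that ``$c$ codes a halting or looping computation'' really is $\Delta_{0}$ once $\alpha$, $k$, $\zeta$ are fixed, so that $\phi(\alpha)$ does not artificially climb above the $\Pi_{2}$-level, but this is entirely routine given the local, sequential nature of $\alpha$-ITRM-computations.
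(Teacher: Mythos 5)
Your proposal is correct and follows essentially the same route as the paper: the paper derives this corollary from the last part of the proof of Theorem \ref{pi3 reflecting}, where exactly your $\Pi_{2}$ statement (``for all $k\in\omega$ and $\zeta<\alpha$ there is either a halting computation of $P_{k}(\zeta)$ or a partial computation with a strong loop'') is shown to hold in $L_{\beta(\alpha)}$ and to force $\beta(\alpha)\leq\bar{\beta}$ upon reflection. Your write-up merely makes the complexity bookkeeping and the use of Corollary \ref{early looping} explicit.
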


\begin{prop}{\label{no overflow at beta}}
Let $P$ be an $\alpha$-ITRM-program, $\zeta<\alpha$. Then, at time $\beta(\alpha)$, no registers of the computation of $P(\zeta)$ overflows.
\end{prop}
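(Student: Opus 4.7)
The plan is to analyse the configuration of $P(\zeta)$ at time $\beta(\alpha)$ via a case split on whether $P(\zeta)$ halts, and to argue that in both cases the configuration at time $\beta(\alpha)$ coincides with a configuration that has already appeared at some strictly earlier time in the computation. Since register contents at any such earlier step are by definition ordinals strictly below $\alpha$, this will be enough to rule out an overflow at $\beta(\alpha)$ (which would require the liminf of some register's contents on $[0,\beta(\alpha))$ to equal $\alpha$).

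First, if $P(\zeta)$ halts, I would argue that its halting time $\gamma$ is strictly less than $\beta(\alpha)$. The key observation is that the set of $\alpha$-ITRM-clockable ordinals is closed under adding $1$: one can always insert a no-op before the HALT line of a halting program. Hence, if $\gamma=\beta(\alpha)$ were clockable, then $\beta(\alpha)+1$ would be clockable as well, contradicting the definition of $\beta(\alpha)$ as the supremum of the clockable ordinals. Once $\gamma<\beta(\alpha)$ is known, the convention that the halting configuration is repeated indefinitely after time $\gamma$ forces the configuration at $\beta(\alpha)$ to be precisely the halting configuration, whose register contents are specific ordinals strictly below $\alpha$; the liminf of each register's contents on $[0,\beta(\alpha))$ is then $<\alpha$, and no overflow occurs.

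If $P(\zeta)$ does not halt, then Corollary \ref{early looping} supplies $\iota<\xi<\beta(\alpha)$ witnessing the looping of $P(\zeta)$; let $\delta$ be such that $\iota+\delta=\xi$. Iterating the liminf-argument in the proof of Lemma \ref{looping criterion}, I would show by transfinite induction that for every ordinal $\rho$ and every $r<\delta$, the configuration of $P(\zeta)$ at time $\xi+\delta\rho+r$ equals the configuration at time $\iota+r$. Writing $\beta(\alpha)=\xi+\delta\rho+r$ by ordinal division (with $r<\delta$), this gives that the configuration at $\beta(\alpha)$ coincides with the configuration at time $\iota+r<\xi$, a specific configuration from the pre-looping phase whose register contents are all ordinals strictly below $\alpha$; again, no overflow occurs.

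The only delicate point is verifying the periodicity identity $\mathrm{conf}(\xi+\delta\rho+r)=\mathrm{conf}(\iota+r)$ through limit stages in $\rho$ and in $r$, where both sides are defined componentwise as liminfs and one must check that the two liminfs genuinely agree. This is routine given that the strong loop hypothesis forces all configurations on $[\iota,\xi)$ to dominate $\mathrm{conf}(\iota)$ componentwise (so the liminf over a tail of one period matches the liminf in the corresponding interval of the original loop), but it is essentially the only place where genuine care is required.
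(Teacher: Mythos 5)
Your proof is correct, and its overall shape (split into the halting and the non-halting case, invoke Corollary \ref{early looping} in the latter) matches the paper's; the difference lies in how the looping case is finished. The paper observes that $\beta(\alpha)$, as the supremum of the clockable ordinals, is additively indecomposable, so the strong loop entered at some $\xi<\beta(\alpha)$ is repeated cofinally often below $\beta(\alpha)$; hence the loop configuration $c$ recurs cofinally before time $\beta(\alpha)$, and the liminf-rule immediately bounds each register content at time $\beta(\alpha)$ by the corresponding component of $c<\alpha$. You instead prove the full periodicity identity $\mathrm{conf}(\xi+\delta\rho+r)=\mathrm{conf}(\iota+r)$ and identify the exact configuration at $\beta(\alpha)$ by ordinal division. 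Your route requires more care at the limit stages, but the sketch you give is sound (tail-invariance of the liminf together with the fact that every configuration in $[\iota,\xi)$ dominates $\mathrm{conf}(\iota)$ componentwise handles both limits in $\rho$ and limits in $r$), and it buys something the paper's shortcut does not: it shows that no overflow occurs at \emph{any} time once a strong loop has been entered, using nothing about $\beta(\alpha)$ beyond $\beta(\alpha)>\xi$, whereas the paper's argument genuinely needs additive indecomposability to get cofinal recurrence of $c$ below $\beta(\alpha)$. In the halting case you also supply the justification (closure of the clockable ordinals under successor, so the supremum is not attained) that the paper leaves implicit when it asserts that a halting computation terminates before time $\beta(\alpha)$.
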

\begin{proof}
If $P(\zeta)$ halts, it does so before time $\beta(\alpha)$, so there is no overflow at time $\beta(\alpha)$. Now suppose that $P(\zeta)$ does not halt. As $\beta(\alpha)$ is the supremum of the looping times for $\alpha$-ITRMs, we know that $P(\zeta)$ entered a strong loop before time $\beta(\alpha)$, which, by additive indecomposability of $\beta(\alpha)$, has been repeated unboundedly often below $\beta(\alpha)$. In particular, there is a configuration $c$ that appeared unboundedly often before time $\beta(\alpha)$. But then, by the liminf-rule, there cannot be an overflow at time $\beta(\alpha)$.
\end{proof}

We mention some further properties of $\beta(\alpha)$. 

%
%

We also note the following rather straightforward generalization of the induction used in Koepke \cite{K1} for ITRMs:

\begin{defini}{\label{alpha safe def}}
Let us say that an ordinal $\tau$ is $\alpha$-safe if there is no $\alpha$-ITRM-program $P$ and no $\zeta<\alpha$ such that $(P,\zeta)$ has a proper limit at time $\tau$.

Let us denote the $i$th ordinal which is both additively indecomposable and $\alpha$-safe by $\tau_{i}(\alpha)$ and let $\tau_{\omega}(\alpha):=\text{sup}_{i\in\omega}\tau_{i}(\alpha)$. 
Moreover, define $\tau_{i}^{w}(\alpha)$ in the analogous way for $\alpha$-wITRMs.
\end{defini}

\begin{lemma}{\label{alpha induction}}
For all exponentially closed $\alpha$, we have $\beta(\alpha)\leq\tau_{\omega}(\alpha)$. Moreover, the halting times of $\alpha$-wITRMs are bounded above by $\tau_{1}^{w}(\alpha)$.
\end{lemma}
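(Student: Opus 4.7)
My plan is to mimic the inductive argument of Theorem \ref{halting time bound}, substituting $\tau_{k}(\alpha)$ for $\alpha^{k}$ throughout and using the $\alpha$-safety of $\tau_{k}(\alpha)$ in place of Lemma \ref{no bad limits}. Concretely, I would prove by induction on $n$ that any $\alpha$-ITRM-program $P$ using $n$ registers which has not halted by time $\tau_{n+1}(\alpha)$ satisfies the looping criterion of Lemma \ref{looping criterion}; since every program uses only finitely many registers, this yields the bound $\beta(\alpha)\leq\tau_{\omega}(\alpha)$.

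For the inductive step, suppose $P(\zeta)$ reaches time $\tau_{n+1}(\alpha)$ with configuration $c=(l,r_{1},\ldots,r_{n})$. By the $\alpha$-safety of $\tau_{n+1}(\alpha)$, no $r_{i}$ is a proper limit, so each nonzero $r_{i}$ was attained cofinally often before $\tau_{n+1}(\alpha)$; hence there is some $\tau<\tau_{n+1}(\alpha)$ past which no register content drops below its value in $c$ and the active program line never drops below $l$. I would then split into two cases as in Theorem \ref{halting time bound}: if no register overflows at time $\tau_{n+1}(\alpha)$, I would construct an $\omega$-sequence $(\alpha_{k}:k\in\omega)\subseteq[\tau,\tau_{n+1}(\alpha))$ recording successive coincidences of components with $c$, and use the liminf-rule at $\eta:=\sup_{k<\omega}\alpha_{k}$ to conclude that the configuration at time $\eta$ equals $c$, yielding a looping witness in $[\tau,\eta]$; if some register overflows at $\tau_{n+1}(\alpha)$, I would pass to the $\tau_{n}(\alpha)$-long suffix ending there, whose final configuration features a strictly smaller number of nonzero registers, and then invoke the inductive hypothesis on that suffix.

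The main obstacle is the step where Theorem \ref{halting time bound} invoked definability of $(\alpha_{k})$ over $L_{\alpha}$ together with regularity of $\alpha$ in $L_{\alpha+1}$ to force $\eta<\tau_{n+1}(\alpha)$; that route is unavailable without ZF$^{-}$. To replace it, I would exploit the additive indecomposability of $\tau_{n+1}(\alpha)$ and the minimality clause in the definition of the $\tau_{k}(\alpha)$: if $\eta=\tau_{n+1}(\alpha)$, then the liminf-rule already places a copy of $c$ at $\tau_{n+1}(\alpha)$ itself, and by additive indecomposability the configuration-tracking argument can be reproduced inside a shorter additively indecomposable tail of $[\tau,\tau_{n+1}(\alpha))$ of order type $\tau_{k}(\alpha)$ with $k\leq n$, producing the required loop witness strictly below $\tau_{n+1}(\alpha)$.

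The second claim, concerning $\alpha$-wITRMs, reduces to the base case at $\tau_{1}^{w}(\alpha)$: since wITRMs admit no resets and hence no overflow case arises, the single-step version of the above analysis at $\tau_{1}^{w}(\alpha)$ already forces any $\alpha$-wITRM-program either to halt strictly before $\tau_{1}^{w}(\alpha)$ or to loop, with no induction on the number of registers required.
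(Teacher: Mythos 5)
Your overall architecture matches the paper's: an induction in which $\tau_{k}(\alpha)$ replaces $\alpha^{k}$, $\alpha$-safety replaces Lemma \ref{no bad limits}, the no-overflow case is handled by interleaving an $\omega$-sequence of times at which the individual components agree with $c$, and the overflow case is handled by additive indecomposability and the inductive hypothesis (note only that the relevant segment is $[\gamma,\gamma+\tau_{n}(\alpha))$ for a suitable $\gamma$, i.e.\ a segment \emph{starting} at a definite time, not a ``suffix ending at $\tau_{n+1}(\alpha)$''). You have also correctly isolated the crux: why is $\eta=\sup_{k}\alpha_{k}$ strictly below $\tau_{n+1}(\alpha)$, now that the definability-over-$L_{\alpha}$/regularity argument of Theorem \ref{halting time bound} is unavailable?

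However, your proposed resolution of that crux is a genuine gap. Knowing that the liminf-rule places $c$ at $\tau_{n+1}(\alpha)$ when $\eta=\tau_{n+1}(\alpha)$ is no progress -- that the configuration at $\tau_{n+1}(\alpha)$ is $c$ was the hypothesis, and Lemma \ref{looping criterion} needs an occurrence of $c$ \emph{strictly before} $\tau_{n+1}(\alpha)$ (Corollary \ref{limit loops} needs unboundedly many). Nor does ``reproducing the tracking argument inside a tail of order type $\tau_{k}(\alpha)$, $k\leq n$'' work: such a segment is an initial piece of the tail, not cofinal in $\tau_{n+1}(\alpha)$, and there is no reason the interleaved coincidences fall into it in the required pattern; additive indecomposability and the minimality clause give you nothing here. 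The idea you are missing is that $\alpha$-safety is a statement about \emph{all} $\alpha$-ITRM-programs, not just $P$. The paper implements the coincidence-tracking itself as a machine: run $P(\zeta)$ from time $\tau$ on (clockable, since $\tau$ lies below the halting/looping time) alongside a fresh register $R$ that is incremented each time the currently awaited component agrees with the corresponding component of $c$, cycling through the components in phases. When $R$ first holds a limit ordinal, the configuration of $P(\zeta)$ is $c$. If this first happened only at time $\tau_{n+1}(\alpha)$, then $R$ would contain $\omega$ there while having been finite at all earlier times, i.e.\ $\omega$ would be a proper limit of this auxiliary program at $\tau_{n+1}(\alpha)$ -- contradicting the $\alpha$-safety of $\tau_{n+1}(\alpha)$. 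Hence $c$ recurs strictly before $\tau_{n+1}(\alpha)$ and the loop witness exists. Without some such use of safety against the \emph{auxiliary} machine, your argument does not close; the same remark applies to your base case for the $\tau_{1}^{w}(\alpha)$ bound on $\alpha$-wITRMs, which is otherwise correctly reduced to the no-overflow case.
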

\begin{proof}
Let $P$ use the registers $R_{1},...,R_{n}$. 
As in \cite{K1}, we really show the following statement:

\bigskip
\textbf{Claim}: If less than $k$ many registers contain $0$ at time $\tau_{k}(\alpha)$ in the computation of $P(\zeta)$, then $P(\zeta)$ loops.

\bigskip
To see this, first suppose that, at time $\tau_{1}(\alpha)$, no register contains $0$. Let $c=(l,r_{1},...,r_{n})$ be the configuration of $P(\zeta)$ at time $\tau_{1}(\alpha)$. 
By definition of $\tau_{1}(\alpha)$, there is $\tau<\tau_{1}(\alpha)$ such that the content of the $i$th register does not drop below $r_{i}$ between times $\tau$ and $\tau_{1}(\alpha)$ (and similarly for the active program line). Consequently, $r_{i}$ has occured as the content if the $i$th register unboundedly often before time $\tau_{1}(\alpha)$.  If we can show that $c$ occured between times $\tau$ and $\tau_{1}(\alpha)$, we have a strong loop for $P(\zeta)$, as desired. We proceed as follows: As $\tau$ occurs below the looping or halting time of $P(\zeta)$, $\tau$ is clockable. Thus, we can run $P(\zeta)$ for $\tau$ many steps. After that, we continue to run $P(\zeta)$, but along with that, we have a new registers $R$ (initially containing $0$) and run a routine that works in phases and observes the computation of $P(\zeta)$ do detect the following: In phase $i\in\{1,2,...,(n-1)\}$, it waits for $R_{i}$ to contain $r_{i}$. When this happens, it counts $1$ upwards in $R$ and switches to phase $(i+1)$. In phase $n$, it waits for $R_{n}$ to contain $r_{n}$ and, when that happens, counts $1$ upwards in $R$ and switches to phase $0$. In phase $0$, it waits for the active program line to be $l$ and when that happens, it counts $1$ upwards in $R$ and switches to phase $1$. Thus, when $R$ contains a limit ordinal, the configuration of $P(\zeta)$ is $c$. If that happens at $\tau_{1}(\alpha)$ for the first time, the content of $R$ at time $\tau_{1}(\alpha)$ is $\omega$, while it was $<\omega$ at all earlier times, contradicting the definition of $\tau_{1}(\alpha)$. Thus, $c$ occurs between times $\tau$ and $\tau_{1}(\alpha)$, so $P(\zeta)$ is indeed looping.

\bigskip

The inductive step now works as in the proof of Theorem \ref{halting time bound} above or as the proof of the main theorem in \cite{K1}:
Suppose that the statement is proved for $k$ and suppose that, at time $\tau_{k+1}(\alpha)$, at most $k$ registers contain $0$. If none of these $0$s is due to an overflow, we are back in the situation of the base case. Otherwise, suppose that $R_{1}$ overflows at time $\tau_{k+1}(\alpha)$. Again, there is $\tau<\tau_{k+1}(\alpha)$ such that, from time $\tau$ on, the content of $R_{i}$ is always $> 0$; in particular, no register that does not contain $0$ at time $\tau_{k+1}(\alpha)$ contains $0$ after time $\tau$ (up to time $\tau_{k+1}(\alpha)$). Now, as $R_{1}$ overflows, $R_{1}$ does not contain $0$ from some time $\xi<\tau_{k+1}(\alpha)$ on; let $\gamma=\text{max}\{\xi,\tau\}$. By additive indecomposability of $\tau_{k+1}(\alpha)$, we have $\gamma+\tau_{k}(\alpha)<\tau_{k+1}(\alpha)$. But then, at time $\gamma+\tau_{k}(\alpha)$, less than $k$ registers contained $0$. As we can regard this as the $\tau_{k}(\alpha)$th step of the computation of $P$ starting in the configuration that $P(\zeta)$ had at time $\gamma$, it follows by induction that $P(\zeta)$ is looping.

\bigskip

The argument for the second statement is basically the base case of the above induction.
\end{proof}

\begin{remark}
	It is easy to see that the supremum of the $\alpha$-wITRM-clockable ordinals is both additively indecomposable and $\alpha$-safe for weak machines. Thus, this supremum is in fact equal to $\tau_{1}^{w}(\alpha)$. 
\end{remark}



We saw above that, for ITRM-singular $\alpha$ (i.e., for $L_{\alpha}\not\models$ZF$^{-}$), the depth-first search algorithm testing subsets of $\alpha$ for coding well-orderings can be performed on an $\alpha$-ITRM. This algorithm has the property that it produces an infinite descending sequence in the case of an ill-founded input. Consequently, denoting by WO$_{\alpha}$ the set of subsets of $\alpha$ coding well-orderings, we have the following property of $\beta(\alpha)$:

\begin{corollary}{\label{wo true}}
For all $c\in\mathfrak{P}(\alpha)\cap L_{\beta(\alpha)}$, we have that $c\in$WO$_{\alpha}$ if and only if $L_{\beta(\alpha)}\models c\in$WO$_{\alpha}$.
\end{corollary}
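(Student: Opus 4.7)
The plan is to prove the two directions of the biconditional separately. The forward direction ($c\in\text{WO}_{\alpha}\Rightarrow L_{\beta(\alpha)}\models c\in\text{WO}_{\alpha}$) uses only that $L_{\beta(\alpha)}$ is transitive: the property of coding a linear order on $\alpha$ is $\Delta_0$, hence absolute, and any $\omega$-sequence in $L_{\beta(\alpha)}$ that would witness ill-foundedness of $c$ inside $L_{\beta(\alpha)}$ is automatically a genuine descending chain in $V$, contradicting the hypothesis $c\in\text{WO}_{\alpha}$.

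For the reverse direction I would argue by contrapositive: assuming that $c\in\mathfrak{P}(\alpha)\cap L_{\beta(\alpha)}$ is not in $\text{WO}_{\alpha}$ (and without loss of generality codes a linear order, since linearity is absolute), I will locate an infinite descending $c$-chain already inside $L_{\beta(\alpha)}$. By Theorem \ref{strength and L}, $c$ is $\alpha$-ITRM-computable, so the depth-first search algorithm for $\text{WO}_{\alpha}$ mentioned in the paragraph preceding the statement -- which is available thanks to the ITRM-singularity of $\alpha$ together with the limit-preservation of the sequence coding from Lemma \ref{limit preservation} -- can be run with $c$ as input. By design, this computation halts on well-founded input and fails to halt on ill-founded input, and in the latter case it builds up an infinite descending chain inside its stack register. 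Since the computation does not halt on $c$, Corollary \ref{early looping} provides $\iota<\xi<\beta(\alpha)$ witnessing a strong loop; the initial segment of the computation through time $\xi$ is then an $\alpha$-ITRM-definable object and so, by the same theorem, lies inside $L_{\beta(\alpha)}$, and from the register contents along this segment one can read off the descending chain.

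The step I expect to be the main obstacle is making precise how the descending chain is extracted from the computation and certifying that it actually ends up in $L_{\beta(\alpha)}$. Concretely, I would need to identify the specific limit stage at which the stack register holds a code of an $\omega$-length descending tuple (rather than being truncated by backtracking or reset by an overflow), verify that the coded sequence really is descending in $c$, and exhibit the decoding map as $\Sigma_1$-definable over the corresponding initial segment of the computation so that the decoded chain lies in $L_{\beta(\alpha)}$. Once this bookkeeping is carried out, the presence of this chain in $L_{\beta(\alpha)}$ contradicts $L_{\beta(\alpha)}\models c\in\text{WO}_{\alpha}$, closing the argument.
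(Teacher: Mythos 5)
Your argument is essentially the paper's own: the corollary is justified there by precisely this chain of observations, namely that ITRM-singularity of $\alpha$ makes the well-foundedness depth-first search available on an $\alpha$-ITRM, that it can be run on any $c\in\mathfrak{P}(\alpha)\cap L_{\beta(\alpha)}$ since such $c$ is $\alpha$-ITRM-computable by Theorem \ref{strength and L}, that the relevant (initial segment of the) computation lies below $\beta(\alpha)$ and hence in $L_{\beta(\alpha)}$, and that the infinite descending chain produced on ill-founded input is read off from it there, the forward direction being ordinary downward absoluteness of well-foundedness to transitive models. The only cosmetic divergence is that the paper's search routine (compare the proof of Lemma \ref{u-weak WO-true}) is set up to halt with output $0$ upon detecting ill-foundedness rather than to diverge and enter a strong loop as in your appeal to Corollary \ref{early looping}, and the extraction bookkeeping you flag as the remaining obstacle is handled in the paper at the same level of detail you describe, via eventual stabilization of the stack components.
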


In particular, this means that $L_{\beta(\alpha)}$ is $\Pi_{1}^{1}$-true, i.e. for any $\Pi_{1}^{1}$-formula $\phi$ with parameters in $L_{\beta(\alpha)}$, we have $L_{\beta(\alpha)}\models\phi$ if and only if $\phi$ holds in $V$.\footnote{We suspect that this can be generalized via generalized descriptive set theory to a more general version of $\Pi_{1}^{1}$-statements referring to elements and subsets of $\alpha$.} 

\subsection{Special Cases}

With further conditions on $\alpha$, we can obtain more precise estimates of $\beta(\alpha)$. 

Recall that an ordinal $\alpha$ is an index if and only if $(L_{\alpha+1}\setminus L_{\alpha})\cap\mathfrak{P}(\omega)\neq\emptyset$. Moreover, for an ordinal $\alpha$ and a natural number $i$, $\alpha^{+}$ denotes the next admissible ordinal after $\alpha$, $\alpha^{+i}$ denotes the $i$th admissible ordinal after $\alpha$ and $\alpha^{+\omega}$ denotes the next limit of admissible ordinals after $\alpha$. Note that, if $\alpha$ is an index, then the comprehension axiom for subsets of $\omega$ does not hold in $L_{\alpha}$, so $L_{\alpha}\not\models\text{ZF}^{-}$ and thus $\alpha$ is ITRM-singular.

\begin{prop}{\label{beta alpha properties}}
	$\beta(\alpha)$ is not admissible. If $\alpha$ is an index, then $\beta(\alpha)$ is a limit of admissible ordinals.
\end{prop}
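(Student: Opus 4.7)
The first assertion is immediate from Corollary \ref{no pi2 reflection}: every admissible ordinal is $\Pi_{2}$-reflecting, so since $\beta(\alpha)$ fails $\Pi_{2}$-reflection it cannot be admissible.

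For the second assertion, my plan is to show that, for every $\gamma < \beta(\alpha)$, the least admissible $\gamma^{+}$ strictly above $\gamma$ again satisfies $\gamma^{+} < \beta(\alpha)$; this immediately yields admissibles cofinally below $\beta(\alpha)$.

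The argument will proceed in the following steps. First, since $\gamma < \beta(\alpha)$, Lemma \ref{no gaps} (downward closure of clockable ordinals) gives that $\gamma$ is itself $\alpha$-ITRM-clockable, and then Theorem \ref{clock implies write} yields an $\alpha$-ITRM-computable $\alpha$-code $c_{\gamma}$ for $\gamma$. Second, because $\alpha$ is an index it is ITRM-singular, so the truth predicate evaluator of Lemma \ref{singular truth predicate evaluation} is at my disposal; together with $c_{\gamma}$ this lets an $\alpha$-ITRM simulate any ordinary oracle Turing computation with oracle $c_{\gamma}$, and more generally any $\Sigma_{1}(L_{\gamma})$-style recursion. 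Third, invoking the classical characterization (a form of Sacks' theorem) that $\gamma^{+}$ is the supremum of the ordinals $\xi$ admitting a code recursive in $c_{\gamma}$, I conclude that every $\xi < \gamma^{+}$ has an $\alpha$-ITRM-computable $\alpha$-code and so is $\alpha$-ITRM-computable. Fourth, by the second half of Theorem \ref{clock implies write}, which says that the supremum of the $\alpha$-ITRM-computable ordinals equals $\beta(\alpha)$, this forces $\gamma^{+} \leq \beta(\alpha)$. Finally, since $\gamma^{+}$ is admissible by construction while $\beta(\alpha)$ is not by the first assertion, the inequality must be strict, producing the desired admissible in the interval $(\gamma, \beta(\alpha))$.

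The step I expect to be the main obstacle is the third one, namely the verification that every $\xi < \gamma^{+}$ really is $\alpha$-ITRM-computable from $c_{\gamma}$. For $\alpha = \omega$ this is part of standard generalized recursion theory, but in the present setting one has to check carefully that the oracle Turing simulation (or, more generally, the relevant $\Sigma_{1}(L_{\gamma})$-recursions) can be carried out faithfully on an $\alpha$-ITRM, using stack-based techniques in the spirit of the proof of Lemma \ref{singular truth predicate evaluation} so that no crucial register content is lost at limit stages of the simulation.
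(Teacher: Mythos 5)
The first assertion is handled exactly as in the paper: Corollary \ref{no pi2 reflection} already records that $\beta(\alpha)$ is not $\Pi_{2}$-reflecting and hence not admissible, so nothing more is needed there.

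For the second assertion your overall strategy (show that for each $\gamma<\beta(\alpha)$ an admissible ordinal above $\gamma$ is again below $\beta(\alpha)$) matches the paper's, but the step you yourself flag as the main obstacle is a genuine gap, and it is precisely the point where the index hypothesis has to do its work. The ``form of Sacks' theorem'' you invoke --- that every $\xi<\gamma^{+}$ admits a code recursive in a code for $\gamma$ --- is a theorem about \emph{reals}, i.e.\ about $\omega$-codes and ordinary Turing reducibility. Your $c_{\gamma}$ is an $\alpha$-code, a subset of $\alpha$, and for such a set ``recursive in $c_{\gamma}$'' has no classical meaning; the natural substitutes (reductions in admissible recursion theory, or $\alpha$-ITRM-computability relative to $c_{\gamma}$) would require you to prove a boundedness statement whose content is essentially the inequality $\gamma^{+}\leq\beta(\alpha)$ you are trying to establish, so the argument is circular as it stands. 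A symptom of the problem is that your proof uses the index hypothesis only through ITRM-singularity; if it worked, it would prove the conclusion for every ITRM-singular exponentially closed $\alpha$, which is stronger than what the proposition asserts. The paper instead uses the index hypothesis essentially: since $\alpha$ is an index and $\gamma$ is clockable, a $\Sigma_{1}$-Skolem-hull argument shows that $\gamma+1$ is again an index, so there is a \emph{real} $g\subseteq\omega$ in $L_{\gamma+2}\setminus L_{\gamma+1}$ coding $\gamma$. This $g$ is $\alpha$-ITRM-computable; every ordinal recursive in $g$ in the classical sense is then $\alpha$-ITRM-computable because $\alpha$-ITRMs simulate $\omega$-ITRMs; and checking all of these for well-foundedness and summing them produces a computable code for $\omega_{1}^{\text{CK},g}$, an admissible ordinal lying strictly between $\gamma$ and $\beta(\alpha)$. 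To repair your argument, replace your third step by this reduction to real codes rather than $\alpha$-codes.
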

\begin{proof}
	That $\beta(\alpha)$ is not admissible was seen in Corollary \ref{no pi2 reflection} above.\footnote{For a different way to see that $\beta(\alpha)$ is not admissible, note that $\beta(\alpha)>\alpha$ and the function that maps $(k,\zeta)\in\omega\times\alpha$ to the halting- or looping time (the end of the first repetition of a strong loop) of the $k$th program in the input $\zeta$ is $\Sigma_{1}$ (in fact $\Delta_{1}$) over $L_{\beta(\alpha)}$.}
	
	Now suppose that $\alpha$ is an index. Let $\gamma<\beta(\alpha)$. Thus, $\gamma$ is clockable, so $\gamma+1$ is an index. To see this, note that a real number in $L_{\alpha+1}\setminus L_{\alpha}$ can be used in the definition of a real number over $L_{\gamma}$. Now suppose that $P(\iota)$ runs for exactly $\gamma$ many steps. There is a real $r$ in $L_{\alpha}$ that codes $\gamma$ by assumption. Now $\gamma+1$ is minimal with the property that $L_{\gamma+1}$ believes that there is an ordinal $\rho$ coded by $r$ such that $P(\rho)$ halts. Hence, the $\Sigma_{1}$-Skolem hull of $\{r\}$ in $L_{\gamma+1}$ is $L_{\gamma+1}$, so $\gamma+1$ is a index by standard finestructure.

	Let $g\in L_{\gamma+2}\setminus L_{\gamma+1}$ be a code for $\gamma$. Then $g$ is $\alpha$-ITRM-computable. The same holds for all ordinals that are recursive in $g$; as $\alpha$-ITRMs can simulate ITRMs, we can check all of these for well-foundedness and compute a code for their sum, which will be $\omega_{1}^{\text{CK},g}$.
	Thus, we obtain $\gamma<\omega_{1}^{\text{CK},g}<\beta(\alpha)$, so there is an admissible clockable ordinal above $\gamma$, as desired.
\end{proof}

\begin{remark}
	Note that this has the consequence that the computational strength of $\alpha$-ITRMs may make wild jumps as the number of used registers increases: In fact, if $L_{\alpha}\models\Sigma_{n(k)}$-collection (with $k\in\omega$ and $n(k)$ as in Corollary \ref{refined halting bound}), an $\alpha$-ITRM with $\leq k$ registers will halt or loop in $<\alpha^{k+1}$ many steps, while the halting times with any number of registers go at least up to $\alpha^{+\omega}$. This condition is for example satisfied by the smallest $\alpha>\omega$ that satisfies $\Sigma_{n(k)}$-collection.
\end{remark}

Concerning lower bounds for the supremum of the clockable ordinals, this yields the following partial result. 

\begin{corollary}{\label{index ordinals up to limit of admissibles}}
	Let $\alpha$ be an index. Then $\beta(\alpha)\geq\alpha^{+\omega}$.
\end{corollary}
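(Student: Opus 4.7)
The plan is to bootstrap from Proposition \ref{beta alpha properties}, which tells us that whenever $\alpha$ is an index and $\gamma < \beta(\alpha)$, there is an admissible ordinal strictly between $\gamma$ and $\beta(\alpha)$ (namely $\omega_1^{\text{CK},g}$ for a code $g$ of $\gamma$), and that this admissible is itself $\alpha$-ITRM-clockable. This already tells us that the admissible ordinals below $\beta(\alpha)$ above $\alpha$ are unbounded, and it only remains to count them carefully.

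First I would note that $\alpha < \beta(\alpha)$, since counting up to overflow in a single register yields a halting $\alpha$-ITRM-program that clocks an ordinal $\geq\alpha$. Thus $\gamma_0 := \alpha$ is below $\beta(\alpha)$ and, by Lemma \ref{no gaps}, clockable. Then I would recursively define $\gamma_{n+1}$ to be the admissible ordinal above $\gamma_n$ and below $\beta(\alpha)$ produced by the argument in Proposition \ref{beta alpha properties} (applied to the clockable ordinal $\gamma_n$). Since $\gamma_{n+1} < \beta(\alpha)$ is again clockable by Lemma \ref{no gaps}, the construction proceeds through all $n < \omega$.

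Next I would verify, by induction on $n \geq 1$, that $\gamma_n \geq \alpha^{+n}$: for $n=1$ this is immediate since $\gamma_1 > \alpha$ is admissible, hence at least the first admissible $\alpha^+$ above $\alpha$; for the step, $\gamma_{n+1}$ is admissible and $> \gamma_n \geq \alpha^{+n}$, so $\gamma_{n+1}$ is at least the $(n{+}1)$-th admissible above $\alpha$, i.e.\ $\gamma_{n+1} \geq \alpha^{+(n+1)}$. Taking suprema gives
\[
\alpha^{+\omega} = \sup_{n<\omega}\alpha^{+n} \leq \sup_{n<\omega}\gamma_n \leq \beta(\alpha),
\]
which is the desired inequality.

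There is no real obstacle here, as the substantive content was already extracted in the proof of Proposition \ref{beta alpha properties}; the only point worth a line of care is ensuring that each $\gamma_n$ remains clockable so that the proposition can be reapplied, which is guaranteed by the downward closure of clockability (Lemma \ref{no gaps}).
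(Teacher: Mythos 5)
Your proof is correct and follows essentially the same route as the paper: both arguments rest entirely on Proposition \ref{beta alpha properties}, which shows that $\beta(\alpha)>\alpha$ is a limit of admissible ordinals when $\alpha$ is an index. The paper simply invokes that $\alpha^{+\omega}$ is by definition the least such ordinal above $\alpha$, whereas you unpack the same content into an explicit increasing sequence $\gamma_{n}\geq\alpha^{+n}$ of clockable admissibles and take the supremum; this is a harmless elaboration, not a different argument.
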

\begin{proof}
	We have that $\beta(\alpha)>\alpha$ and from Proposition \ref{beta alpha properties} it follows that $\beta(\alpha)$ is a limit of admissible ordinals. As $\alpha^{+\omega}$ is by definition the smallest ordinal with those properties, we have $\beta(\alpha)\geq\alpha^{+\omega}$.
	
\end{proof}

We recall from \cite{AS} that an admissible ordinal $\alpha$ is called \emph{Gandy} if and only if the supremum of the $\alpha$-recursive ordinals equals $\alpha^{+}$. Gostanian \cite{Go} obtained several sufficient criteria for a countable ordinal to be Gandy. Generalizations to the uncountable case were given by Abramson and Sacks \cite{AS}. 

We start by connecting Gandyness to $\alpha$-ITRMs.

\begin{lemma}{\label{gandy and beta}}
If $\beta$ is an admissible Gandy ordinal with $\beta\in(\alpha,\beta(\alpha)]$, then $\beta^{+}<\beta(\alpha)$.
\end{lemma}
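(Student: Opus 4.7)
The plan is to proceed by contradiction: assume $\beta^+\geq\beta(\alpha)$ and then exhibit an $\alpha$-ITRM-clockable ordinal strictly exceeding $\beta(\alpha)$, which contradicts the very definition of $\beta(\alpha)$ as the supremum of the clockable ones.

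First I would pin down where $\beta(\alpha)$ must lie. By hypothesis $\beta\leq\beta(\alpha)$, and under the contradictory assumption $\beta(\alpha)\leq\beta^+$. Since $\beta$ and $\beta^+$ are admissible but $\beta(\alpha)$ is not (Corollary \ref{no pi2 reflection}), both endpoints can be excluded and so $\beta<\beta(\alpha)<\beta^+$. By the downwards closure of the clockables (Lemma \ref{no gaps}), $\beta$ is therefore $\alpha$-ITRM-clockable, and Theorem \ref{clock implies write} supplies an $\alpha$-ITRM-computable $\alpha$-code $c\subseteq\alpha$ for the ordinal $\beta$.

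Next, I would use $c$ to implement a truth predicate for $L_{\beta}$ on an $\alpha$-ITRM, by adapting the proof of Lemma \ref{singular truth predicate evaluation}: since $\alpha$ is ITRM-singular, the stack-based singularization technique of that lemma can be used to recursively evaluate $\in$-formulas with parameters over the coded structure $L_{\beta}$ rather than over $L_{\alpha}$, with $c$ playing the role played there by the Koepke--Seyfferth code for $L_{\alpha}$. Once this truth predicate is in hand, any $\Delta_{1}^{L_{\beta}}$-definable relation on $\beta$ is $\alpha$-ITRM-computable as a subset of $\alpha$ via the code $c$; in particular so is every $\beta$-recursive well-ordering. Because $\beta$ is Gandy, the supremum of the order types of the $\beta$-recursive well-orderings equals $\beta^+>\beta(\alpha)$, so some such well-ordering $W$ has order type $\gamma$ with $\beta(\alpha)<\gamma<\beta^+$, and its $\alpha$-translation $W'\subseteq\alpha$ is $\alpha$-ITRM-computable. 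Finally, since $\alpha$ is ITRM-singular, the depth-first search for well-foundedness on $W'$ (exactly as used at the end of the proof of Theorem \ref{clock implies write}) can be run on an $\alpha$-ITRM, halts because $W'$ is well-founded, and takes at least $\gamma$ many steps. This produces a halting $\alpha$-ITRM-computation of length $\geq\gamma>\beta(\alpha)$, delivering the promised contradiction.

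The main obstacle I expect is not the contradiction itself but verifying that the truth predicate machinery of Lemma \ref{singular truth predicate evaluation} transfers cleanly from $L_{\alpha}$ to the coded structure $L_{\beta}$. What is needed is that the stack-preservation fact (Lemma \ref{limit preservation}), together with an $\alpha$-ITRM-computable cofinal map into $\alpha$ supplied by the ITRM-singularity of $\alpha$, suffices to drive the analogous recursion for truth in $L_{\beta}$ using only the code $c$ and the singularity of $\alpha$ (not $\beta$). This should be routine given the explicit construction sketched after Lemma \ref{limit preservation}, but it is the one place where care is required.
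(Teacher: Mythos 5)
Your proposal is correct and follows essentially the same route as the paper: both arguments use the inadmissibility of $\beta(\alpha)$ to get $\beta<\beta(\alpha)$, conclude that the $\beta$-recursive well-orderings become $\alpha$-ITRM-computable via an $\alpha$-code for $\beta$, invoke Gandyness to push the supremum of computable ordinals up to $\beta^{+}$, and use inadmissibility once more for strictness. The paper states this directly (citing that all subsets of $\alpha$ in $L_{\beta+1}\subseteq L_{\beta(\alpha)}$ are computable) rather than by contradiction, and leaves implicit the translation-via-code step that you correctly flag as the point requiring care.
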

\begin{proof}
	Suppose that $\beta\leq\beta(\alpha)$ is admissible. As we saw above, $\beta(\alpha)$ is not admissible. Thus, we actually have $\beta(\alpha)>\beta$. It follows that $\beta+1$ is $\alpha$-ITRM-clockable. Consequently, every subset of $\alpha$ contained in $L_{\beta+1}$ is $\alpha$-ITRM-computable, which includes every $\alpha$-recursive subset of $\alpha$. Thus, we have $\beta^{+}\leq\beta(\alpha)$. By inadmissibility of $\beta(\alpha)$ again, we have $\beta^{+}<\beta(\alpha)$.
\end{proof}

We recall the following special case of a statement from Gostanian \cite{Go}, Corollary $2.1.1$:

\begin{lemma}{\label{gostanian}}[Cf. \cite{Go}, Corollary $2.1.1$]
	
	Let $\alpha$ be an admissible ordinal such that $\alpha$ is countable in $L_{\alpha^{+}}$. 
	Let $\phi(\vec{p})$ be an $\in$-formula with parameters in $L_{\alpha}$ and suppose that $\alpha$ is minimal with $L_{\alpha}\models\phi(\vec{p})$. Then $\alpha$ is Gandy. 
\end{lemma}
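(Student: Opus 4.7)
The plan is to show that every ordinal $\gamma$ with $\alpha \leq \gamma < \alpha^{+}$ admits an $\alpha$-recursive code, i.e.\ a well-ordering of $\alpha$ of order type $\gamma$ that is $\Delta_{1}$ over $L_{\alpha}$ (with parameters from $L_{\alpha}$). The two hypotheses will play complementary roles: the minimality of $\alpha$ with $L_{\alpha}\models\phi(\vec{p})$ yields that $\alpha$ is uniformly $\Sigma_{1}$-definable from $\vec{p}$ in every $L_{\beta}$ with $\beta\geq\alpha$, while the countability of $\alpha$ in $L_{\alpha^{+}}$ will let us apply condensation without leaving the interval $[\alpha,\alpha^{+})$.

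First I would fix $\gamma\in[\alpha,\alpha^{+})$ and let $\beta<\alpha^{+}$ be least with $\gamma\in L_{\beta}$. By standard finestructure, $\beta$ is an index, and there is a $\Sigma_{1}$-Skolem function $h^{L_{\beta}}$ such that $L_{\beta}$ is the $\Sigma_{1}$-hull of $\alpha\cup\{q\}$ for some parameter $q\in L_{\beta}$. The key step is to transfer the canonical well-ordering of $L_{\beta}$ of order type essentially $\gamma$, induced by $h^{L_{\beta}}$, to a $\Delta_{1}(L_{\alpha})$-description of a well-ordering of $\alpha$ of order type $\gamma$.

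Next I would form, inside $L_{\alpha^{+}}$, the $\Sigma_{1}$-Skolem hull $H$ of $\{\vec{p},q,\gamma\}$ together with all ordinals below some suitable $\eta<\alpha$ that are enough to enumerate $\alpha$ (using the countability of $\alpha$ in $L_{\alpha^{+}}$, we may feed in a bijection $f\colon\omega\to\alpha$). By the condensation lemma, $H$ collapses isomorphically to some $L_{\bar{\beta}}$ with $\bar{\beta}<\alpha^{+}$. The minimality of $\alpha$ with respect to $\phi(\vec{p})$ is what forces the Mostowski collapse of $\alpha$ in $H$ to be $\alpha$ itself: any smaller image $\bar{\alpha}$ would give $L_{\bar{\alpha}}\models\phi(\vec{p})$, contradicting minimality. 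Consequently, the image of the chosen well-ordering of order type $\gamma$ under the collapse is already coded by an element of $L_{\bar{\beta}}$ that is $\Sigma_{1}$-definable over $L_{\alpha}$ from $\vec{p}$ and $q$.

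The main obstacle will be making the minimality step do double duty: one needs it not only to pin $\alpha$ under the collapse, but also to recognize the $\Pi_{1}$-side of the code $\Delta_{1}$-ly over $L_{\alpha}$. Both halves are obtained by running the hull-and-collapse argument twice in a symmetric fashion (once to describe ``$x$ is in the well-order'' and once to describe its negation), each time exploiting countability of $\alpha$ in $L_{\alpha^{+}}$ to place the collapsed model below $\alpha^{+}$ and exploiting minimality to fix $\alpha$ pointwise on the relevant $\Sigma_{1}$-names. Patching these together furnishes the required $\Delta_{1}(L_{\alpha})$ well-ordering of $\alpha$ of order type $\gamma$; since $\gamma<\alpha^{+}$ was arbitrary, the supremum of $\alpha$-recursive ordinals is $\alpha^{+}$, so $\alpha$ is Gandy.
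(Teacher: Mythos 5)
First, note that the paper does not prove this lemma at all: it is imported verbatim from Gostanian's \emph{The next admissible ordinal} (Corollary~2.1.1 there), so any proof you give is necessarily ``a different route''; the question is only whether your sketch actually works.

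It does not, as written: there is a genuine gap at the decisive step. Your preparatory moves are sound in spirit --- taking a $\Sigma_{1}$-hull $H$ inside $L_{\alpha^{+}}$ containing $\vec{p}$, $\gamma$, a surjection of $\alpha$ onto $\gamma$, and an initial segment of $\alpha$ past the parameters, collapsing to some $L_{\bar{\beta}}$ with $\bar{\beta}<\alpha^{+}$, and using minimality of $\alpha$ for $\phi(\vec{p})$ to conclude that the collapse fixes $\alpha$ (else $L_{\bar{\alpha}}\models\phi(\vec{p})$ for some $\bar{\alpha}<\alpha$). But what this yields is a well-ordering of $\alpha$ of order type $\geq\gamma$ that is $\Sigma_{1}$-definable over $L_{\bar{\beta}}$ for some $\bar{\beta}$ with $\alpha<\bar{\beta}<\alpha^{+}$. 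The sentence ``Consequently, the image \ldots is already coded by an element of $L_{\bar{\beta}}$ that is $\Sigma_{1}$-definable over $L_{\alpha}$ from $\vec{p}$ and $q$'' is a non sequitur: nothing in the condensation argument converts definability over $L_{\bar{\beta}}$ into definability over $L_{\alpha}$, and in general no such conversion exists --- not every subset of $\alpha$ lying in $L_{\alpha^{+}}$, nor even every one lying in $L_{\alpha+1}$, is $\Delta_{1}(L_{\alpha})$ (the $\Sigma_{1}(L_{\alpha})$ master code is a standard counterexample). Bridging exactly this gap --- showing that well-orderings of every order type below $\alpha^{+}$ can be realized by $\Delta_{1}(L_{\alpha})$ relations, i.e.\ by elements of $L_{\alpha+1}$ with definitions of the right complexity --- \emph{is} the content of Gandyness, and Gostanian's proof requires machinery (a notation-system/reflection analysis of when the $\alpha$-recursive ordinals are bounded below $\alpha^{+}$) that your ``run the hull-and-collapse argument twice'' remark does not supply. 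A smaller but real slip: ``the least $\beta$ with $\gamma\in L_{\beta}$'' is simply $\gamma+1$, which has none of the Skolem-hull structure you attribute to it; you presumably mean the least $\beta$ such that $L_{\beta+1}$ contains a surjection of $\alpha$ onto $\gamma$, and the argument should be restated accordingly.
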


This allows us to show for many special cases of $\alpha$ that $\beta(\alpha)$ is either rather small or quite large:

\begin{thm}{\label{weak or very strong}}
	Let $\alpha$ be admissible and countable in $L_{\alpha^{+}}$.
	Then either $\beta(\alpha)<\alpha^{+}$ or $\beta(\alpha)\geq\alpha^{+\omega}$. 
\end{thm}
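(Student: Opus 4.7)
The plan is to prove the contrapositive: assume $\beta(\alpha) \geq \alpha^+$, and show by induction on $n \geq 1$ that $\alpha^{+n} < \beta(\alpha)$, whence $\alpha^{+\omega} = \sup_n \alpha^{+n} \leq \beta(\alpha)$. The base case $n = 1$ is automatic, since $\alpha^+$ is admissible while $\beta(\alpha)$ is not (Corollary \ref{no pi2 reflection}), so the assumption $\beta(\alpha) \geq \alpha^+$ in fact gives $\alpha^+ < \beta(\alpha)$. For the inductive step, assuming $\alpha^{+n} < \beta(\alpha)$, I will argue that $\alpha^{+n}$ is Gandy; then Lemma \ref{gandy and beta} yields $\alpha^{+(n+1)} = (\alpha^{+n})^+ < \beta(\alpha)$, closing the induction.

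To see that $\alpha^{+n}$ is Gandy I plan to apply Gostanian's criterion (Lemma \ref{gostanian}), which requires three things: (i) admissibility of $\alpha^{+n}$, which is immediate; (ii) countability of $\alpha^{+n}$ in $L_{\alpha^{+(n+1)}}$; and (iii) some $\in$-formula $\phi$ with parameters in $L_{\alpha^{+n}}$ such that $\alpha^{+n}$ is minimal with $L_{\alpha^{+n}} \models \phi$. For (iii), a convenient choice is $\phi(p) := $ ``$p$ is an ordinal and the ambient structure is admissible'', with admissibility packaged as a single $\in$-formula via a universal $\Sigma_1$-formula, and parameter $p := \alpha^{+(n-1)}$ (setting $\alpha^{+0} := \alpha$). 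Then $L_\gamma \models \phi(\alpha^{+(n-1)})$ forces both $\gamma > \alpha^{+(n-1)}$ and admissibility of $L_\gamma$; since no ordinal strictly between $\alpha^{+(n-1)}$ and $\alpha^{+n}$ is admissible, the minimal such $\gamma$ is exactly $\alpha^{+n}$.

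For (ii) the strategy is iterated coding. Beginning with a real $r_0 \in L_{\alpha^+}$ coding $\alpha$ (supplied by the hypothesis), I plan to recursively build reals $r_k \in L_{\alpha^{+k}+1}$ coding $\alpha^{+k}$. Given $r_{k-1}$ coding $\alpha^{+(k-1)}$, standard fine structure tells us that every non-admissible level $L_\beta$ with $\alpha^{+(k-1)} < \beta < \alpha^{+k}$ admits a $\Sigma_1$-definable cofinal map from $\alpha^{+(k-1)}$ into $\beta$; composing with the bijection $\omega \leftrightarrow \alpha^{+(k-1)}$ coded by $r_{k-1}$ and amalgamating these maps $\Sigma_1$-definably over $L_{\alpha^{+k}}$ produces a surjection $\omega \to \alpha^{+k}$, yielding the desired real $r_k$ inside $L_{\alpha^{+k}+1} \subseteq L_{\alpha^{+(k+1)}}$. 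The main obstacle is precisely this fine-structural step---uniformly producing the $\Sigma_1$-cofinal map from the smaller admissible into each non-admissible level between consecutive admissibles---which is familiar in spirit but needs to be arranged cleanly; once (ii) is in hand, step (iii) goes through via the admissibility-as-single-formula encoding, and Gostanian's lemma then gives Gandyness of $\alpha^{+n}$ and closes the induction.
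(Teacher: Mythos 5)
Your proposal is correct in outline and shares the paper's skeleton exactly: argue the contrapositive, induct on $n$ using the non-admissibility of $\beta(\alpha)$ for the base case, establish that $\alpha^{+n}$ is Gandy via Gostanian's criterion (Lemma \ref{gostanian}), and close the induction with Lemma \ref{gandy and beta}. Where you genuinely diverge is in \emph{how} you witness the minimality hypothesis of Gostanian's criterion. The paper uses the inductive hypothesis $\beta(\alpha)>\alpha^{+n}$ to get that $\alpha^{+n}$ is $\alpha$-ITRM-clockable by some $P(\rho)$, and then writes down a formula $\phi(\rho,c)$ encoding the liminf-rule so that $L_{\gamma}\models\phi(\rho,c)$ exactly when $\gamma\geq\alpha^{+n}$; minimality is thus tied to the machine model. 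You instead take $\phi(p)$ to say ``$p$ is an ordinal and I am admissible'' with parameter $\alpha^{+(n-1)}$, so that $\alpha^{+n}$ is minimal as \emph{the least admissible level containing $\alpha^{+(n-1)}$}. This is simpler, needs no clockability at all for the Gandyness step (you get that $\alpha^{+n}$ is Gandy for every $n$ unconditionally, with the inductive hypothesis entering only through Lemma \ref{gandy and beta}), and is a legitimate instance of Gostanian's criterion, granted the standard fact that admissibility of a limit level of $L$ is expressible by a single sentence via the uniform $\Sigma_{1}$-satisfaction predicate. What the paper's version buys in exchange is that the minimality formula is self-contained within the computational framework already developed.

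Two small cautions. First, your statement of the fine-structural step in (ii) is not quite right as phrased: it is not that every non-admissible $L_{\beta}$ with $\alpha^{+(k-1)}<\beta<\alpha^{+k}$ carries a $\Sigma_{1}$-definable cofinal map from $\alpha^{+(k-1)}$ into $\beta$; the fact you actually want is that $L_{\alpha^{+k}}$ has no cardinals in the interval $(\alpha^{+(k-1)},\alpha^{+k})$, so every such $\beta$ admits a surjection from $\alpha^{+(k-1)}$ lying in $L_{\alpha^{+k}}$, and hence (composing with $r_{k-1}$) is countable there; one then amalgamates to make $\alpha^{+k}$ countable in $L_{\alpha^{+k}+1}$. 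The conclusion is standard and correct. Second, note that this countability propagation is needed by the paper's proof as well --- Lemma \ref{gostanian} is applied there to $\alpha^{+i}$ without verifying that $\alpha^{+i}$ is countable in $L_{\alpha^{+(i+1)}}$ --- so by flagging and sketching it you are, if anything, being more careful than the source.
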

\begin{proof}
Suppose that the first alternative fails, i.e. $\beta(\alpha)\geq\alpha^{+}$. We now show inductively that $\beta(\alpha)\geq \alpha^{+i}$, for every $i\in\omega$, which implies that the second alternative holds. The base case has already been dealt with, so it remains to prove the induction step. Hence, let us assume that $\beta(\alpha)\geq \alpha^{+i}$; we will show that $\beta(\alpha)\geq\alpha^{+(i+1)}$. 

Since $\beta(\alpha)$ is not admissible, we have $\beta(\alpha)>\alpha^{+i}$. Hence $\alpha^{+i}$ is $\alpha$-ITRM-clockable. Thus, there exists a program $P$ and some parameter $\rho<\alpha$ such that $P(\rho)$ runs for exactly $\alpha^{+i}$ many steps. 

We will now express that fact that $P(\rho)$ halts by a formula $\phi$ that holds in $L_{\gamma}$ if and only if $\gamma\geq\alpha^{+i}$. (Note that ``there is a halting computation by $P(\rho)$ will not work, as $L_{\alpha^{+i}}$ will not contain a computation of length $\alpha^{+i}$.) Let $c=(l,r_{1},...,r_{n})$ be the halting configuration of the computation of $P(\rho)$, where $l\in\omega$ is the index of the active program line and $r_{1},...,r_{n}<\alpha$ are the register contents. Then $c\in L_{\alpha}$. Now let $n$ be the maximal index of a register used by $P$ and let $\phi(\rho,c)$ be the conjunction of the following statements:

\begin{itemize}
	\item There is $\tau$ such that every partial computation of $P(\rho)$ of length $>\tau$ has the active program line index $\geq l$ at all times $>\tau$.
	\item For every $\tau$ such that there is a partial computation of $P(\rho)$ of length $\tau$, there is $\tau^{\prime}>\tau$ and a partial computation $C$ of $P(\rho)$ of length $\tau^{\prime}+1$ such that, at time $\tau^{\prime}$, $C$ has the active program line index $l$. 
	\item (For every $i\leq n$.) For every $\rho<r_{i}$, there is $\tau$ such that there is a partial computation of $P(\rho)$ of length $\tau$ and every partial computation $C$ of $P(\rho)$ of length $>\tau$ has the content of the $i$th register $>\rho$ from time $\tau$ on. 
    \item (For every $i\leq n$.) For every $\tau$ such that there is a partial computation of $P(\rho)$ of length $\tau$, there is $\tau^{\prime}>\tau$ and a partial computation of $P(\rho)$ of length $\tau^{\prime}+1$ such that, at time $\tau^{\prime}$, the content of the $i$th register is $\leq r_{i}$. 
\end{itemize}

These statements simply encode the liminf-rule. The first two statements imply that, at time $\alpha^{+i}$, the active program line index is $l$, while the last two imply that the register contents are $(r_{1},...,r_{n})$. Taken together, they express that, at time $\alpha^{+i}$, $P(\rho)$ assumes the halting configuration $c$. 

Thus $\alpha^{+i}$ is minimal with the property that $L_{\alpha^{+i}}\models\phi(\rho,c)$. Now, by Lemma \ref{gostanian}, this implies that $\alpha^{+i}$ is Gandy. By Lemma \ref{gandy and beta}, we have $\beta(\alpha)>(\alpha^{+i})^{+}=\alpha^{+(i+1)}$, as desired. 
\end{proof}

We do not know whether the first alternative can occur for any countable $\alpha$ unless $L_{\alpha}\models$ZF$^{-}$. 

The argument for Theorem \ref{weak or very strong} actually shows that, when $\beta(\alpha)\geq\alpha^{+}$, then $\beta(\alpha)$ cannot lie between two successive admissible ordinals.  By iterating the same argument, we obtain:

\begin{corollary}
	Let $\alpha$ be admissible and countable in $L_{\alpha^{+}}$. Suppose that $\beta(\alpha)\geq\alpha^{+}$. Then $\beta(\alpha)\in[\delta,\delta^{+})$, where $\delta$ is a limit of admissible ordinals $>\alpha$. 
\end{corollary}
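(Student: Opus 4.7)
The plan is to iterate the Gandyness argument of Theorem \ref{weak or very strong}: I would establish that every admissible $\gamma$ with $\alpha^{+} \leq \gamma < \beta(\alpha)$ satisfies $\gamma^{+} < \beta(\alpha)$, and then take $\delta$ to be the supremum of the admissibles below $\beta(\alpha)$. The key lemma will force this supremum not to be attained, which automatically turns it into a limit of admissibles above $\alpha$.

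First I would transplant the inductive step of Theorem \ref{weak or very strong} from the specific ordinals $\alpha^{+i}$ to an arbitrary admissible $\gamma \in [\alpha^{+}, \beta(\alpha))$. Since $\gamma < \beta(\alpha)$, the ordinal $\gamma$ is $\alpha$-ITRM-clockable, so there exist a program $P$ and $\rho < \alpha$ with $P(\rho)$ halting at exactly time $\gamma$. Using the same $\Pi_{3}$-style formula $\phi(\rho, c)$ that encodes the halting configuration $c$ via the liminf rule, one gets that $\gamma$ is minimal with $L_{\gamma} \models \phi(\rho, c)$. Lemma \ref{gostanian} then makes $\gamma$ Gandy, and Lemma \ref{gandy and beta} upgrades this to $\gamma^{+} < \beta(\alpha)$.

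The main obstacle here is verifying the countability hypothesis of Lemma \ref{gostanian}, namely that $\gamma$ is countable in $L_{\gamma^{+}}$. The idea is that Theorem \ref{clock implies write} together with Lemma \ref{read out codes} provides an $\alpha$-code for $\gamma$ that is definable sufficiently low in the $L$-hierarchy to lie in $L_{\gamma^{+}}$; this gives a surjection $\alpha \to \gamma$ inside $L_{\gamma^{+}}$. Composing with a bijection $\omega \to \alpha$ drawn from $L_{\alpha^{+}} \subseteq L_{\gamma^{+}}$ (possible since $\gamma \geq \alpha^{+}$) then yields a surjection $\omega \to \gamma$ inside $L_{\gamma^{+}}$, as required. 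This fine-structural bookkeeping is the only genuinely new piece; the rest is a mechanical reprise of Theorem \ref{weak or very strong}.

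With the strengthened step in hand, I would set $\delta := \sup\{\gamma < \beta(\alpha) : \gamma \text{ admissible}\}$ and finish as follows. If $\delta$ were attained by some admissible $\gamma_{0}$, the step just proved would produce a strictly larger admissible $\gamma_{0}^{+} < \beta(\alpha)$, contradicting maximality; hence $\delta$ is a limit of admissibles. Since $\beta(\alpha) \geq \alpha^{+\omega}$ by Theorem \ref{weak or very strong}, the relevant admissibles accumulate above $\alpha$, so $\delta > \alpha$ and $\delta$ is in particular a limit of admissibles above $\alpha$. The bound $\delta \leq \beta(\alpha)$ is immediate, and $\delta^{+} > \beta(\alpha)$ because otherwise $\delta^{+}$ would be an admissible strictly between $\delta$ and $\beta(\alpha)$ (the inadmissibility of $\beta(\alpha)$ from Corollary \ref{no pi2 reflection} excludes $\delta^{+} = \beta(\alpha)$), again contradicting the definition of $\delta$. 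Therefore $\beta(\alpha) \in [\delta, \delta^{+})$ with $\delta$ a limit of admissibles above $\alpha$, as required.
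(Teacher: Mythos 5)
Your proposal is correct and follows exactly the route the paper intends: the paper proves this corollary simply by remarking that the argument of Theorem \ref{weak or very strong} "cannot lie between two successive admissible ordinals" and can be iterated, which is precisely your generalized inductive step applied to an arbitrary admissible $\gamma\in[\alpha^{+},\beta(\alpha))$ followed by taking $\delta$ to be the supremum of the admissibles below $\beta(\alpha)$. Your explicit verification that such a $\gamma$ is countable in $L_{\gamma^{+}}$ (via the $\alpha$-code for $\gamma$ lying in $L_{\gamma^{+}}$ composed with a surjection $\omega\to\alpha$ from $L_{\alpha^{+}}$) is a detail the paper leaves implicit even in Theorem \ref{weak or very strong} itself, and it is a welcome addition.
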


\section{Weak ITRMs and $u$-weakness}

Concerning $\alpha$-ITRMs, one of our main results above is that their halting times are bounded by $\alpha^{\omega}$ if and only if $L_{\alpha}\models\text{ZF}^{-}$. Moreover, in Theorem \ref{pi3 reflecting} above, we saw that the halting times of $\alpha$-wITRMs are bounded by $\alpha$ itself when $\alpha$ is $\Pi_{3}$-reflecting. 
This motivates the following definition:

\begin{defini}{\label{def u-weak}}
An ordinal $\alpha$ is $u$-weak if and only if all $\alpha$-wITRM-clockable ordinals are smaller than $\alpha$.
\end{defini}

Clearly, since we allow parameters, all ordinals $<\alpha$ are $\alpha$-wITRM-clockable, so that $\alpha$ coincides with the supremum of the $\alpha$-wITRM-clockable ordinals when $\alpha$ is $u$-weak. Also note that, by the speedup-theorem, an ordinal $\alpha$ is $u$-weak if and only if $\alpha$ is not $\alpha$-wITRM-clockable. 

We currently have no full characterization for $u$-weakness. By the proof of Theorem \ref{pi3 reflecting}, all $\Pi_{3}$-reflecting ordinals are $u$-weak. In this section, we will 
additionally prove the following:

\begin{itemize}
\item An admissible ordinal $\alpha$ is $u$-weak if and only if it is wITRM-regular (i.e. not wITRM-singular).
\item Any $u$-weak ordinal is admissible.
\item There are $u$-weak ordinals that are not $\Pi_{3}$-reflecting.
\item There are admissible ordinals that are not $u$-weak.
\end{itemize}

Thus, $u$-weakness is strictly between $\Pi_3$-reflection and admissibility. We now prove the statements in the order of their appearence.

\begin{thm}{\label{u-weak regular}}
An admissible ordinal $\alpha$ is $u$-weak if and only if it is wITRM-regular.
\end{thm}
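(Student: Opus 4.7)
My plan is to prove the biconditional by contrapositive in both directions.

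For the direction ``wITRM-singular implies not $u$-weak'', I would apply Proposition~\ref{continuous itrm-singularization} to obtain a continuous, increasing, $\alpha$-wITRM-computable cofinal function $g:\gamma\to\alpha$ with $\gamma<\alpha$, and then construct an $\alpha$-wITRM-program that, in a main loop indexed by a counter register $R_0$ ranging from $0$ toward $\gamma$, first halts if $R_0=\gamma$, otherwise computes $g(R_0)$ into a register $R_1$, counts an auxiliary register $R_2$ up from $0$ until $R_2=R_1$, explicitly resets $R_1$ to $0$, increments $R_0$, and loops. The total running time dominates $\sum_{\iota<\gamma}g(\iota)\geq\sup_{\iota<\gamma}g(\iota)=\alpha$, so this program clocks an ordinal $\geq\alpha$ and hence $\alpha$ is not $u$-weak. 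The key technical point is the explicit reset of $R_1$ at the end of each iteration, which keeps its liminf at the overall supremum time equal to $0$ rather than $\alpha$ and thereby avoids an overflow-crash; continuity of $g$ simultaneously handles the liminf of $R_1$ at mid-loop limit stages, and $R_0,R_2$ trivially remain bounded below $\gamma$ and $g(R_0)$ respectively.

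For the direction ``admissible and not $u$-weak implies wITRM-singular'', I would take $P$ and $\xi<\alpha$ with $P(\xi)$ halting at some ordinal $\geq\alpha$. The first step is to show that some register $R_j$ of $P$ has contents unbounded in $\alpha$ over the partial computation on $[0,\alpha)$: if instead all register contents stay below some $\beta<\alpha$, then the configurations lie in the set $F=\omega\times(\beta+1)^n\in L_\alpha$, and $\Sigma_1$-collection applied to the $\Delta_1$-function sending $c\in F$ to $\sup\{\iota<\alpha:c_\iota=c\}$ either bounds all these suprema uniformly below $\alpha$ (contradicting that $P(\xi)$ continues to produce configurations past that bound) or else produces some $c^*\in F$ occurring cofinally; iterating the choice of a minimal such $c^*$ in the component-wise order on $F$ and using well-foundedness, one eventually finds a cofinally-recurring configuration that also equals the liminf at some limit stage, which via Corollary~\ref{limit loops} forces a strong loop and contradicts halting. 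Given the unboundedness of $R_j$, applying $\Sigma_1$-collection to the $\Sigma_1$-formula ``$\exists\iota<\alpha:r^j_\iota\geq\eta$'' with $\eta$ ranging over a bounded initial segment extracts a $\beta^*<\alpha$ such that the $\alpha$-wITRM-computable function $\iota\mapsto r^j_\iota$ on $[0,\beta^*)$ has cofinal range in $\alpha$, witnessing wITRM-singularity via Definition~\ref{singular}.

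The main obstacle is the unboundedness step in the reverse direction: running the iteration that selects a minimally cofinally-occurring configuration $c^*$ whose component-wise liminf at some limit actually equals $c^*$ itself. One needs to track how ``dips'' of individual components of $c_\iota$ below the corresponding components of $c^*$ at intermediate times can prevent $c^*$ from being the liminf, and replace $c^*$ by a strictly smaller (in component-wise order) cofinally-occurring configuration whenever this happens. Well-foundedness of the component-wise order on $\omega\times(\beta+1)^n$ ensures termination of this replacement procedure, and a careful invocation of $\Sigma_1$-collection at each stage of the iteration keeps all the involved quantities below $\alpha$.
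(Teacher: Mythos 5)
Your forward direction is essentially the paper's argument and is fine; the paper does not even need the continuity reduction, it simply counts up to $f(\iota)$ for each $\iota<\beta$ in turn (after first disposing of the case where some single call to the singularizing program already takes $\geq\alpha$ steps, which by itself witnesses failure of $u$-weakness).

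The reverse direction, however, has a genuine gap in two places. First, your dichotomy is the wrong one: it is not true that a computation reaching time $\alpha$ without looping must have some register with contents unbounded in $\alpha$. Because of proper limits (a register counting $0,1,2,\dots$ contains $\omega$ at time $\omega$ without ever having contained it before), the liminf configuration at a limit stage need not have occurred at all before that stage, so your replacement procedure cannot be driven to a configuration that both recurs cofinally and equals the liminf at some limit; all register contents can remain bounded by some $\beta<\alpha$ throughout while the machine still avoids any strong loop. Second, even granting an unbounded register $R_{j}$, $\Sigma_{1}$-collection applied to ``$\exists\iota\,(r^{j}_{\iota}\geq\eta)$'' only bounds the witnesses when $\eta$ ranges over a set of $L_{\alpha}$, i.e.\ over some $[0,\mu)$ with $\mu<\alpha$; this yields $\beta^{*}$ with $r^{j}$ cofinal in $\mu$ on $[0,\beta^{*})$, not cofinal in $\alpha$. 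You cannot collect over all of $\alpha$, so you never obtain a cofinal function with domain $<\alpha$, which is what Definition \ref{singular} requires. The paper's proof splits instead on whether the configuration at time $\alpha$ involves a proper limit. If it does not, the configuration at time $\alpha$ recurred cofinally before $\alpha$, and admissibility produces a strong loop contradicting halting (your Case-1-style argument, done correctly). If it does, i.e.\ some register contains $\rho$ at time $\alpha$ although its content was $<\rho$ cofinally often before, then the singularizing function is $g:\rho\to\alpha$ sending $\iota$ to the least time after which that register's content stays $\geq\iota$: its domain is $\rho<\alpha$ automatically, since $\rho$ is a register content of a non-crashed $\alpha$-wITRM, its range is cofinal by the liminf rule, and it is $\alpha$-wITRM-computable using the clockability of $\alpha$. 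This use of the register content itself as the small domain is the key idea your proposal is missing.
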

\begin{proof}

We will prove both implications by contraposition. 

``$\Rightarrow$'': Suppose that $\alpha$ is not wITRM-regular. Thus, there is an $\alpha$-wITRM-computable, total and cofinal function $f:\beta\rightarrow\alpha$ with $\beta<\alpha$. Let $P$ be a program that computes $f$, say in the parameter $\vec{p}$. If $P(\iota,\vec{p})$ halts after $\geq\alpha$ many steps on some input $\iota<\beta$, then we have found an $\alpha$-wITRM-program that halts in $\geq\alpha$ many steps, so $\alpha$ is not $u$-weak. On the other hand, suppose that $P(\iota,\vec{p})$ takes $<\alpha$ many steps on any input $\iota<\beta$. In this case, we add the parameter $\beta$ to our computation and use a separate register $R$. In $R$, we count upwards from $0$ to $\beta$ and for every $\iota<\beta$, we use $P$ to compute $f(\iota)$ and then use another extra register to count from $0$ to $f(\iota)$. When the content of $R$ reaches $\beta$, we halt. Clearly, this program halts after at least $\alpha$ many steps, so again, $\alpha$ is not $u$-weak. 

\medskip

``$\Leftarrow$'': Now suppose that $\alpha$ is not $u$-weak. As we mentioned above, this implies that $\alpha$ is $\alpha$-wITRM-clockable. In particular, there is a program $P$ that halts in $\alpha$ many steps (we ignore parameters for the sake of simplicity). We distinguish two cases:

\medskip
\textbf{Case $1$}: For every register $R$ used by $P$, there is a time $\tau<\alpha$ such that, from time $\tau$ on, the content of $R$ never dropped below the content of $R$ at time $\alpha$. 

By the liminf-rule, there must also be an ordinal $\rho<\alpha$ such that the active program line index was never below the one at time $\alpha$ after time $\rho$. Let $\mu$ be the maximum of $\rho$ and the finitely many $\tau$ that exist by the case assumption. 
Again by the liminf-rule, the active program line and the register contents at time $\alpha$ must have occured cofinally often before time $\alpha$. Thus, we can build an interleaving, strictly increasing sequence of length $\omega$ of times at which these values were the `right' ones. By admissibility, the supremum $\bar{\alpha}$ of this sequence will be $<\alpha$. But then, $\bar{\alpha}$ and $\alpha$ witness that $P$ is looping, which is a contradiction.

\medskip
\textbf{Case $2$}: There is some register $R$ containing an ordinal $\rho$ at time $\alpha$ such that the content of $R$ was $<\rho$ cofinally often before time $\alpha$. 

By the liminf-rule, this means that, for every $\iota<\rho$, there must be some $\xi<\alpha$ such that, from time $\xi$ on, the content of $R$ was $\geq\xi$. Let $g$ be the function that maps each $\iota<\rho$ to the minimal such $\xi$. Clearly, $g$ maps $\rho$ cofinally into $\alpha$. (If $g[\rho]$ was bounded in by $\beta<\alpha$, all contents of $R$ would be $\geq\rho$ from time $\beta$ on, contradicting the case assumption.) We claim that $g$ is $\alpha$-wITRM-computable. To this end, we use the clockability of $\alpha$. 
Reserve two extra registers, say $T_{1}$ and $T_{2}$. Now, given $\iota<\rho$, we proceed as follows: In $T_{1}$, we count upwards, starting with $0$. For every value $\zeta\in T_{1}$, we run $P$ for $\alpha$ many steps, using the clockability of $\alpha$ and check whether, from time $\zeta$ on, the content of $R$ drops below $\iota$. If yes, we continue with the next value of $\iota$. If not, we halt with output $\zeta$. Since we know that some $\zeta<\alpha$ exists for which the routine will halt, this computes $g(\iota)$ without producing an overflow and thus on an $\alpha$-wITRM. Thus, $\alpha$ is wITRM-singular, i.e. not wITRM-regular.

\end{proof}

\begin{remark}
Note that only the reverse direction uses the admissibility of $\alpha$, which can in fact be weakened to the assumption that $\Sigma_{1}$-definable total functions with domain $\omega$ are bounded. 
\end{remark}

\begin{thm}{\label{u-weak admissible}}
Any $u$-weak ordinal is admissible.
\end{thm}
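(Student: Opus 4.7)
The plan is to prove the contrapositive: assuming $\alpha$ is not admissible, we will construct an $\alpha$-wITRM-computation halting at a time $\geq \alpha$, which witnesses that $\alpha$ is not $u$-weak. First, I would dispose of the trivial cases where $\alpha$ is a successor or a limit without even modest ordinal-closure by using a suitable ordinal parameter $<\alpha$ to directly clock an ordinal $\geq \alpha$ (for instance, if $\alpha = \beta + 1$, a program counting from $0$ to $\beta$ in a register and then halting clocks $\alpha$).

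The main case is $\alpha$ a limit with $L_\alpha \not\models \text{KP}$. From the failure of $\Sigma_0$-collection (equivalently $\Sigma_1$-collection) in $L_\alpha$, I obtain a $\Sigma_1^{L_\alpha}$-definable (in some parameter $p \in L_\alpha$, which by Lemma \ref{koepke seyfferth} I can identify with a single ordinal $<\alpha$) function $f : \delta \to \alpha$ with $\delta < \alpha$ whose image is unbounded in $\alpha$. Because $f$ is a total function, I may rewrite $f(\iota) \neq \xi \Leftrightarrow \exists \xi' \neq \xi : f(\iota) = \xi'$, which is again $\Sigma_1$; hence $f$ is $\Delta_1^{L_\alpha}$. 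By the Koepke-Seyfferth-style result recalled in the introduction---namely that, for exponentially closed $\alpha$, the $(\alpha,\alpha)$-ITRMs decide exactly the $\Delta_1^{L_\alpha}$-subsets of $\alpha$, and an $(\alpha,\alpha)$-ITRM is precisely an $\alpha$-wITRM whose computation halts in $<\alpha$ many steps (since no register content can approach $\alpha$ before time $\alpha$)---the function $f$ is $\alpha$-wITRM-computable, each computation taking strictly fewer than $\alpha$ many steps.

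Given such a computable $f$, I would then apply the construction from the ``$\Rightarrow$''-direction of the proof of Theorem \ref{u-weak regular}, which, as inspection shows, uses only the existence of an $\alpha$-wITRM-computable cofinal function into $\alpha$ and not the admissibility of $\alpha$: iterate $\iota$ through $\delta$ in an outer register, for each such $\iota$ first compute $f(\iota)$ with the subroutine from the previous paragraph and then count in a further register from $0$ up to $f(\iota)$ before incrementing $\iota$. The cofinality of $f[\delta]$ in $\alpha$ forces the total running time of this composite program to be at least $\sup_{\iota < \delta} f(\iota) = \alpha$, so $\alpha$ is clocked from above and hence not $u$-weak.

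The main obstacle is turning the $\Sigma_1$-definition of $f$ into an actual $\alpha$-wITRM-program without register overflow---a naive search for a $\Sigma_1$-witness on a wITRM risks exhausting the register bound---so the key technical step is passing to the $\Delta_1$-reformulation via totality of $f$ and invoking the equivalence between $\Delta_1^{L_\alpha}$ and $(\alpha,\alpha)$-ITRM-computability, which guarantees that the witness search is bounded strictly below $\alpha$ on each individual input.
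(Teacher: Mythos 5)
Your proof is correct and shares its overall skeleton with the paper's: both argue by contraposition, extract from the failure of admissibility a $\Sigma_{1}(L_{\alpha})$-definable cofinal function $f:\delta\rightarrow\alpha$ with $\delta<\alpha$, upgrade this to an $\alpha$-wITRM-computable cofinal function, and then feed it into the ``$\Rightarrow$''-direction of Theorem \ref{u-weak regular}, which (as you and the remark following that theorem both observe) does not use admissibility. The difference sits in the upgrading step. You compute $f$ itself, via the observation that totality makes its graph $\Delta_{1}$ over $L_{\alpha}$ together with the Koepke--Seyfferth characterization of $(\alpha,\alpha)$-ITRM-computability. The paper instead replaces $f$ by the function $g$ sending $\iota$ to the least $\xi$ such that $L_{\xi}\models\exists{\delta}\,f(\iota)=\delta$; this $g$ is still cofinal and is computed by a direct upward search using only the evaluation of bounded truth predicates, halting as soon as a witness level appears. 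The paper's variant is a bit more economical, needing neither the $\Delta_{1}$ reformulation nor the full Koepke--Seyfferth equivalence, and it sidesteps one overstatement in your write-up: Koepke--Seyfferth gives a time bound $<\alpha$ \emph{per membership query} to the graph of $f$, but recovering the value $f(\iota)$ requires searching through all $\xi\leq f(\iota)$, and the sum of these query times need not stay below $\alpha$. This does not damage your argument --- all that is needed is that each call halts without overflow, which holds since the search register is bounded by $f(\iota)<\alpha$ --- but the claim that each computation of $f(\iota)$ takes strictly fewer than $\alpha$ many steps is not justified and should be weakened accordingly. Finally, both your proof and the paper's implicitly rely on sufficient closure of $\alpha$ (for Lemma \ref{koepke seyfferth}, respectively for the truth-predicate evaluation); your plan to dispose of insufficiently closed limit ordinals by clocking past $\alpha$ directly from a parameter is the right instinct, but it is not entirely routine for limits of the form $\gamma^{\omega}$ with $\gamma<\alpha$, a case the paper's proof does not explicitly address either.
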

\begin{proof}
Suppose for a contradiction that $\alpha$ is $u$-weak, but not admissible. Thus, there is $\beta<\alpha$ and a cofinal function $f:\beta\rightarrow\alpha$ which is $\Sigma_{1}$ over $L_{\alpha}$. Let $g$ be the function that maps $\iota<\beta$ to the smallest $\xi<\alpha$ such that $L_{\xi}$ believes that $f(\iota)$ exists, according to the $\Sigma_{1}$-definition of $f$. Clearly, $g$ is also a cofinal map from $\rho$ to $\alpha$. Now, given $\iota<\beta$, compute upwards in a separate register $T$ starting with $\iota$ and, for every content $\zeta$, use evaluation of bounded truth predicates to test whether $L_{\zeta}\models\exists{\delta}f(\iota)=\delta$. 
If not, continue with the next value of $\zeta$. Otherwise, halt with output $\zeta$. Since this will halt for some $\zeta<\alpha$, this will compute $g(\iota)$ without producing an overflow, and thus by an $\alpha$-wITRM-computation. 
Hence, $\alpha$ is wITRM-singular and hence not $u$-weak by the remark after Theorem \ref{u-weak regular}, a contradiction. 
\end{proof}

\begin{thm}{\label{u-weak not necessary or sufficient}}

(1) There are $u$-weak ordinals that are not $\Pi_{3}$-reflecting. In fact, there are unboundedly many such ordinals. 

(2) There are admissible ordinals that are not $u$-weak.
\end{thm}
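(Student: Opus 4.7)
For part $(1)$, we exhibit $u$-weakness as a $\Pi_3$-property of $L_\alpha$ and apply $\Pi_3$-reflection. By Theorems \ref{u-weak admissible} and \ref{u-weak regular}, $\alpha$ is $u$-weak iff it is admissible and wITRM-regular. The negation of wITRM-regularity asserts $\exists P\exists\beta<\alpha\exists\vec{p}$ such that $P(\cdot,\vec{p})\!\restriction\!\beta$ is total into $\alpha$ (a $\Pi_2$-assertion, since ``halts with output'' is $\Sigma_1$) and has cofinal range (also $\Pi_2$); hence wITRM-regularity is $\Pi_3$. Admissibility is $\Pi_2$ once a standard $\Sigma_1$-satisfaction predicate is fixed. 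Therefore $u$-weakness is expressible by a single $\Pi_3$-formula $\Phi$ over $L_\alpha$.

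Given any $\gamma\in\text{On}$, let $\pi$ be the least $\Pi_3$-reflecting ordinal strictly above $\gamma$. By Theorem \ref{pi3 reflecting}, $\pi$ is $u$-weak, so $L_\pi\models\Phi$. $\Pi_3$-reflection then yields unboundedly many $\bar\alpha<\pi$ with $L_{\bar\alpha}\models\Phi$; pick one in $(\gamma,\pi)$. This $\bar\alpha$ is $u$-weak, and since $\pi$ is the least $\Pi_3$-reflecting above $\gamma$, $\bar\alpha$ cannot itself be $\Pi_3$-reflecting. Letting $\gamma$ vary gives unboundedly many such ordinals.

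For part $(2)$, the aim is to construct an admissible ordinal $\alpha$ which is $\alpha$-wITRM-clockable; by the remark after Definition \ref{def u-weak}, such $\alpha$ is automatically not $u$-weak. The plan is to take $\alpha$ admissible but not $\Sigma_2$-admissible (these form a proper class, since $\Sigma_2$-admissibility is strictly stronger than admissibility). Then there are $\beta<\alpha$ and a $\Sigma_2$-definable cofinal function $f:\beta\to\alpha$ of the form $f(\iota)=\min\{\delta:L_\delta\models\exists y\,\psi(\iota,y)\}$ for some bounded $\psi$. With a parameter encoding $\psi$, an $\alpha$-wITRM can, for each $\iota<\beta$ stored in a counter register, compute $f(\iota)$ by probing $\delta=0,1,2,\ldots$ and evaluating the bounded statement ``$\exists y\in L_\delta\,\psi(\iota,y)$'' via the bounded-truth-predicate routine of Lemma \ref{koepke seyfferth}; admissibility of $\alpha$ guarantees that this search terminates with $f(\iota)<\alpha$. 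The machine then ``wastes'' $f(\iota)$ further steps by counting upwards from $0$ to $f(\iota)$ in a work register (safe, since $f(\iota)<\alpha$), increments $\iota$, and iterates. The total running time equals $\sup_\iota f(\iota)=\alpha$, so the program halts at time $\alpha$.

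The principal technical obstacle is to ensure that no register overflows at the limit time $\alpha$: the outer counter $\iota$ only ever reaches $\beta<\alpha$, so its liminf at $\alpha$ is safe; the work registers of the inner counting subroutine are reset to $0$ at the start of each iteration so that their liminfs at $\alpha$ are $0$; and the halt instruction is placed on a program line whose index is the liminf at $\alpha$ of the active-line indices, so that it is precisely this instruction that is triggered there. Arranging these constraints simultaneously, so that the limit configuration at time $\alpha$ both avoids a crash and activates the halt, is the heart of the construction.
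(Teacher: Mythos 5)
Your strategy for part (1) --- capture $u$-weakness by a single $\Pi_{3}$-formula and reflect it below the least $\Pi_{3}$-reflecting ordinal above a given bound --- is exactly the paper's, but your choice of formula has a gap. Over $L_{\bar\alpha}$, the $\Sigma_{1}$-statement ``there is a halting computation of $P(\iota,\vec{p})$'' only expresses ``$P(\iota,\vec{p})$ halts in fewer than $\bar\alpha$ steps'', since a computation of length $\geq\bar\alpha$ is not an element of $L_{\bar\alpha}$. So your $\Phi$ asserts admissibility plus the absence of a singularization all of whose computations terminate before time $\bar\alpha$. That is implied by $u$-weakness, but the converse direction, which is the one the reflection argument actually needs (namely $L_{\bar\alpha}\models\Phi\Rightarrow\bar\alpha$ is $u$-weak), is not delivered by Theorems \ref{u-weak admissible} and \ref{u-weak regular}: the singularizing function produced in the proof of Theorem \ref{u-weak regular} for an admissible non-$u$-weak ordinal is computed by running a program for $\bar\alpha$ many steps per input, so its existence is invisible to your formula inside $L_{\bar\alpha}$. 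The paper avoids this by using a $\Pi_{3}$-formula that speaks directly about the possible fates of each computation at time $\alpha$ (overflow at time $\alpha$, halting before $\alpha$, or entering a strong loop before $\alpha$), which correctly characterizes $u$-weakness internally.

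Part (2) is where the proposal genuinely breaks down, in two ways. First, the strategy itself cannot work: by the discussion preceding Theorem \ref{pi3 reflecting}, the first $\Pi_{3}$-reflecting ordinal lies strictly below the first $\Sigma_{2}$-admissible ordinal, and it is admissible and $u$-weak; hence ``admissible but not $\Sigma_{2}$-admissible'' does not imply ``not $u$-weak''. Second, and independently, the function you actually write down, $f(\iota)=\min\{\delta:L_{\delta}\models\exists{y}\,\psi(\iota,y)\}$ with $\psi$ bounded, is $\Sigma_{1}$-definable over $L_{\alpha}$ rather than genuinely $\Sigma_{2}$; by $\Sigma_{1}$-collection in the admissible $L_{\alpha}$ it is bounded on any $\beta<\alpha$, so your program halts strictly before time $\alpha$ and clocks nothing $\geq\alpha$. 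Exploiting an honest $\Sigma_{2}$-singularization would require verifying $\Pi_{1}$-facts about $L_{\alpha}$, i.e.\ an unbounded search through $\alpha$, which crashes a wITRM. The paper's argument is entirely different and much simpler: an $\omega_{1}^{\text{CK}}$-wITRM (indeed an $(\omega+1)$-wITRM) simulates an ordinary ITRM step for step, since the simulated registers never exceed $\omega$ and therefore never overflow; as ITRM halting times are cofinal in $\omega_{\omega}^{\text{CK}}>\omega_{1}^{\text{CK}}$, the admissible ordinal $\omega_{1}^{\text{CK}}$ is not $u$-weak.
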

\begin{proof}
(1) We claim that there is a $\Pi_{3}$-sentence $\phi$ such that $L_{\alpha}\models\phi$ if and only if $\alpha$ is $u$-weak. Once this is proved, consider some $u$-weak ordinal $\mu$ and let $\beta$ be the first $\Pi_{3}$-reflecting ordinal $>\mu$. Then, by reflection, there is $\gamma\in(\mu,\beta)$ such that 
$L_{\gamma}\models\phi$, and so $\gamma$ is $u$-weak and $>\mu$, but not $\Pi_{3}$-reflecting. Since all $\Pi_{3}$-reflecting ordinals are $u$-weak, there are unboundedly many $u$-weak ordinals, and the claim follows. 

Now for the claim: $\alpha$ is $u$-weak if and only if, for all programs $P$ and all parameters $\vec{p}\subseteq\alpha$, one of the following holds in $L_{\alpha}$:

\begin{itemize}
\item There is an overflow at time $\alpha$, i.e. $\forall{\iota}\exists{\tau}\exists{i\in\omega}\forall{\xi>\tau}R_{i\xi}>\iota$ (where $i$ denotes the register with index $i$ and $R_{i\xi}$ is the content of $R_{i}$ at time $\xi$ in the computation of $P(\vec{p})$) (this is a $\Pi_{3}$-condition) OR
\item $P(\vec{p})$ halts (this is a $\Sigma_{1}$-condition) OR
\item $P(\vec{p})$ does not halt, i.e. there are $\tau_{1}<\tau_{2}$ such that $P(\vec{p})$ is in a strong loop between times $\tau_1$ and $\tau_2$ (this is a $\Sigma_{1}$-condition).
\end{itemize}

This is clearly necessary for $\alpha$ being $u$-weak. To see that it is sufficient, note that every program $P$ only uses finitely many registers; thus, if the first disjunct holds, one of them has contents that eventually surpass cofinally many ordinals below $\alpha$, which suffices for an overflow.

The disjunction can clearly be written as a $\Pi_{3}$-formula, and thus the same holds for the whole condition. 

\bigskip

(2) Since $\omega_{1}^{\text{CK}}$-wITRMs (in fact, $(\omega+1)$-wITRMs) can simulate ($\omega$-)ITRMs whose running times are all ordinals below $\omega_{\omega}^{\text{CK}}$ (see \cite{K1}), it follows that $\omega_{1}^{\text{CK}}$ is admissible, but not $u$-weak. The same holds for 
$\omega_{i}^{\text{CK}}$ for all $i\in\omega$.

\end{proof}




%

\begin{prop}
	There is an $\alpha$-wITRM-program $P_{\text{WO}}$ such that, for any $c\subseteq\beta<\alpha$, $P_{\text{WO}}^{c}\downarrow=1$ if and only if $c$ codes a well-ordering and otherwise, $P^{c}\downarrow=0$. 
\end{prop}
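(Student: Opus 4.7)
The plan is to perform a depth-first search on the tree $T$ of finite $c$-descending sequences, outputting $1$ if the search is exhausted and $0$ if an infinite branch is detected. First, $P_{\text{WO}}$ verifies by a bounded-quantifier check that $c$ codes a linear order on some $D\subseteq\beta$, a computation that runs in ordinal time below $\beta^{3}$ without any overflow (all intermediate values being below $\beta<\alpha$); if the check fails, output $0$ and halt.

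For the main search, the current DFS path $(x_{0},x_{1},\dots,x_{n})$ together with a ``last-tried child'' marker at each position is encoded in a single register $S$ via iterated Cantor pairing, which by exponential closure of $\alpha$ keeps $S$ below $\beta^{\omega}<\alpha$. Alongside $S$, an auxiliary counter $R$ tracks consecutive pushes: it is incremented on every push and reset to $0$ on every pop. At each step $P_{\text{WO}}$ either pushes the smallest $c$-predecessor of $x_{n}$ strictly above the last-tried child at position $n$, or, if no such element exists, pops $x_{n}$ and updates the marker at $x_{n-1}$. The program halts with output $1$ as soon as the outer iteration through potential roots $x_{0}<\beta$ has been exhausted, and halts with output $0$ as soon as $R$ reaches $\omega$.

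For correctness, in the well-founded case every push sequence is finite and ends in a pop, so $R$ returns to $0$ cofinally often before any limit time, whence its liminf never reaches $\omega$; thus the DFS traverses $T$ in Kleene-Brouwer order and terminates in ordinal time below $\alpha$ (using enough closure of $\alpha$ to keep the Kleene-Brouwer rank of a well-founded $T\subseteq\beta^{<\omega}$ below $\alpha$), and outputs $1$. In the ill-founded case, $T$ has an infinite branch; after exhausting the finite subtrees that precede it in DFS order (each of rank $<\alpha$ by exponential closure), the search commits to the infinite descent, producing $\omega$ consecutive pushes, so that at the subsequent limit time the liminf rule gives $R=\omega$ and the program halts with output $0$.

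The main obstacle is the delicate liminf analysis at limit times. Three points must be checked: (a) the stack register $S$ stays below $\alpha$ throughout, which follows by applying Lemma~\ref{limit preservation} to the iterated pairing encoding so that the liminf is the pairing of the liminfs of the components; (b) in the well-founded case the alternating push-pop pattern forces $R$ to equal $0$ cofinally often before every limit, so its liminf stays below $\omega$ and the $0$-branch is never spuriously triggered; and (c) the overall DFS in the well-founded case completes within time $<\alpha$, which relies on combining exponential closure with additional closure of $\alpha$ to bound the Kleene-Brouwer rank of well-founded subtrees of $\beta^{<\omega}$ below $\alpha$.
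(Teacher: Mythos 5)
Your overall strategy -- depth-first search through the tree of $c$-descending sequences, with the current path coded into a single register by iterated Cantor pairing and Lemma~\ref{limit preservation} guaranteeing coherence of that code at limit times -- is exactly the paper's, which simply invokes the standard ITRM well-foundedness search with $\beta$ at the bottom of the stack. Points (a) and (c) of your analysis are fine. The genuine gap is in your detection of ill-foundedness. Your counter $R$ is reset to $0$ on every pop, so its inferior limit at a limit time is $\omega$ only if the computation eventually performs pushes exclusively, i.e.\ only if from some point on the search descends along an infinite branch without ever backtracking. That happens only when each $x_{n+1}$ on the branch is the \emph{first} child tried at $x_{n}$; in general the DFS interleaves the descent along the leftmost infinite branch with the exhaustion of the well-founded subtrees lying to its left, and between any two successive pushes down that branch there are pops. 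Concretely, let $c$ code the order type $\omega^{*}+\omega$ on $\omega$, with the even numbers carrying the $\omega$-part (so $0$ is the $c$-least element) and the odd numbers carrying the $\omega^{*}$-part on top. The tree is ill-founded via $1>_{c}3>_{c}5>_{c}\cdots$, but after each push of $2k+1$ the search immediately pushes $0$ (the numerically smallest $c$-predecessor) and then pops, so $R$ never exceeds a fixed finite bound and your $0$-branch is never triggered; at the limit the stack register holds $\sup_{n}\mathrm{code}(1,3,\dots,2n+1)$, which codes no finite sequence, and the machine's subsequent behaviour is unspecified. So as written the algorithm is not correct on ill-founded inputs.

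The repair is the one already built into the paper's stack representation (see the two-register stacks described just before Lemma~\ref{limit preservation}): keep a second register $L$ holding the current \emph{length} of the stack, incremented on each push and decremented on each pop. If $t_{n}$ is the time at which $(x_{0},\dots,x_{n})$ is first pushed, then every later configuration before the critical limit extends that prefix, so $L\geq n+1$ from $t_{n}$ on, while $L=n+1$ recurs cofinally in the meantime; hence at the limit $\liminf L=\omega$, and comparing $L$ with a precomputed copy of $\omega$ detects ill-foundedness without any overflow (as $\omega<\alpha$). In the well-founded case the inferior limit of $L$ at any limit time is the length of the node to which the search returns cofinally often, hence finite, so the $0$-branch is never triggered spuriously. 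With your $R$ replaced by this length register, the rest of your argument goes through and coincides with the paper's intended proof.
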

\begin{proof}
	Just perform the usual depth-first search used on ITRMs (see \cite{KM}) with $\beta$ at the bottom of the stack. 
\end{proof}

As we saw for $\beta(\alpha)$, we can now see that $u$-weak ordinals are WO-true. 

\begin{lemma}{\label{u-weak WO-true}}
	If an ordinal $\alpha$ is $u$-weak, then $\alpha$ is admissible and WO-true.
\end{lemma}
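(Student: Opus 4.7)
My plan is to split the statement into its two conjuncts and handle them separately. Admissibility is free: it is exactly the content of Theorem \ref{u-weak admissible}. So the real task is to prove that $\alpha$ is WO-true in the same sense as Corollary \ref{wo true}, namely that for every $c\in\mathfrak{P}(\alpha)\cap L_{\alpha}$, we have $c\in\mathrm{WO}_{\alpha}$ if and only if $L_{\alpha}\models c\in\mathrm{WO}_{\alpha}$. One direction is automatic: if $c$ is well-founded in $V$, then there is no infinite descending sequence anywhere, in particular none in the transitive class $L_{\alpha}$, so $L_{\alpha}\models c\in\mathrm{WO}_{\alpha}$.

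For the nontrivial direction, I would argue by contraposition. Assume $c\notin\mathrm{WO}_{\alpha}$ in $V$, and run the program $P_{\mathrm{WO}}$ of the preceding Proposition on $c$. By that Proposition, the computation halts with output $0$ in $V$. By $u$-weakness of $\alpha$, the halting time of any $\alpha$-wITRM-program (with parameters $<\alpha$) is strictly below $\alpha$, so the entire computation of $P_{\mathrm{WO}}^{c}$ has length $<\alpha$. Since $\alpha$ is admissible, this computation, being $\Sigma_{1}$-definable over $L_{\alpha}$ from $c$ and of length $<\alpha$, is itself an element of $L_{\alpha}$; in particular, its output agrees whether evaluated in $V$ or inside $L_{\alpha}$.

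Next I would exploit the explicit structure of $P_{\mathrm{WO}}$, which is the depth-first search of \cite{KM} with $\beta$ at the bottom of the stack. The output $0$ arises exactly when the stack, which records the current strictly descending $c$-chain being explored, grows past the sentinel $\beta$, meaning that the DFS has actually followed an infinite strictly descending $c$-chain up to the point of overflow. Taking the successive "new deepest" entries of the stack throughout the computation yields such an infinite descending $c$-sequence as a $\Sigma_{1}$-definable subsequence of the trace. Since the trace is in $L_{\alpha}$, so is this descending sequence. Hence $L_{\alpha}\models c\notin\mathrm{WO}_{\alpha}$, establishing the contrapositive.

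The main obstacle is the last step: one has to verify carefully that the output-$0$ branch of $P_{\mathrm{WO}}$ really does leave behind an infinite descending $c$-chain that can be read off the trace inside $L_{\alpha}$, as opposed to merely a bounded witness that lets the program decide. This is essentially an analysis of the DFS of \cite{KM} transported to the $\alpha$-wITRM setting, and it is the only place where the internal workings of $P_{\mathrm{WO}}$ (rather than just its input/output specification from the Proposition) need to be examined; everything else is a clean combination of $u$-weakness, admissibility, and absoluteness of short wITRM-computations.
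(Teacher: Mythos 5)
Your proposal is correct and follows essentially the same route as the paper: admissibility is quoted from Theorem \ref{u-weak admissible}, and WO-trueness is obtained by running $P_{\text{WO}}$ on a code, using $u$-weakness to bound the halting time below $\alpha$ so that the whole computation lies in $L_{\alpha}$, and then reading an infinite descending sequence off the history of the stack register. The only cosmetic difference is that the paper extracts the descending sequence by taking, for each $i$, the eventually constant value of the $i$th stack component, which is exactly the careful trace analysis you flag as the remaining obstacle.
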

\begin{proof}
 Suppose that $\alpha$ is $u$-weak. By Theorem \ref{u-weak admissible}, $\alpha$ is admissible. Let $R\in L_{\alpha}$ be a linear ordering and suppose that $R$ is not well-founded. Since $R\in L_{\alpha}$, some $\beta$-code $c$ for $R$ is $\alpha$-wITRM-computable for some $\beta<\alpha$. By $u$-weakness, $P_{\text{WO}}^{c}$ will terminate in $\gamma<\alpha$ many steps after finding an ill-founded sequence in $R$. Now, the computation $D$ of $P_{\text{WO}}^{c}$ is contained in $L_{\alpha}$ and from $D$, one can define an ill-founded sequence $(a_{i}:i\in\omega)$ in $R$ by letting $a_{i}=\delta$ if and only if, for some $\iota<\gamma$, the $i$th component of content of the stack register is always $\delta$. But then, we have $(a_{i}:i\in\omega)\in L_{\alpha}$, as desired. 
\end{proof}

\subsection{Further Observations}

We mention a bunch of related results, some of which are contained in \cite{C} as exercises, in the hope that these may lead to more substantial generalizations or refinements.

\begin{prop}{\label{strong witrms}}
The real numbers computable by an $\alpha$-wITRM for $\alpha>\omega$ are a superset of $\mathfrak{P}(\omega)\cap L_{\omega_{\omega}^{\text{CK}}}$.
\end{prop}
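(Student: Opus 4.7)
The plan is to reduce the statement to a known result of Koepke from \cite{K1}, namely that the $\omega$-ITRM-computable reals coincide with $\mathfrak{P}(\omega)\cap L_{\omega_{\omega}^{\text{CK}}}$. Granting that, it suffices to show that for every ordinal $\alpha>\omega$ an $\alpha$-wITRM can simulate an arbitrary $\omega$-ITRM-computation; this is essentially the simulation already invoked in the proof of Theorem \ref{u-weak not necessary or sufficient}(2), and the current proposition follows by concatenating the two reductions.

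For the simulation I would proceed as follows. Since $\alpha>\omega$, the ordinal $\omega$ is itself below $\alpha$ and may be supplied as a parameter; reserve one register $W$ to hold the value $\omega$ throughout the computation. Given an $\omega$-ITRM-program $P$ with registers $R_{1},\dots,R_{n}$, I would run the arithmetic commands of $P$ directly on the corresponding $\alpha$-wITRM registers, but after each simulated $P$-command I would insert a short correction block: cycle through $R_{1},\dots,R_{n}$ and, whenever $R_{i}=W$, reset $R_{i}$ to $0$. Since no register content ever exceeds $\omega<\alpha$, no register overflow occurs in the $\alpha$-wITRM sense, so no crash happens and the computation proceeds through all ordinal times.

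The faithfulness of the simulation is verified step by step. At successor stages the commands of $P$ are carried out verbatim and the correction step either leaves the content unchanged (if it is already a natural number) or resets it from $\omega$ to $0$, matching the overflow convention of the $\omega$-ITRM. At a limit stage $\delta$, the $\alpha$-wITRM records the componentwise liminf of the earlier contents; the correction block ensures that any value $\omega$ appearing in $R_{i}$ is eliminated on the next successor step, so transient $\omega$-values occur only at isolated times and do not affect the liminf. Hence the liminf in the $\alpha$-wITRM is determined by the natural-number values produced by the simulated $P$-steps and agrees with the liminf that the $\omega$-ITRM would take. If that liminf happens to be $\omega$, the subsequent correction turns it into $0$, again in accordance with the $\omega$-ITRM rule.

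The main technical point, as usual for such simulations, is the careful bookkeeping showing that the interleaved correction blocks do not misalign the correspondence between the two machines at limit stages, and in particular that the transient $\omega$-appearances are genuinely non-cofinal in every later limit. Since the simulation of $\omega$-ITRMs by $(\omega+1)$-wITRMs (a fortiori by $\alpha$-wITRMs for $\alpha>\omega$) and the computational characterization of the $\omega$-ITRM-reals are both results of \cite{K1} already cited in this paper, the argument reduces to this reference and a brief remark that the simulation transfers from $(\omega+1)$-wITRMs to $\alpha$-wITRMs for any $\alpha>\omega$, as the latter can emulate the former verbatim without crashing.
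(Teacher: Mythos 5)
Your proposal is correct and takes essentially the same route as the paper, whose entire proof is the one-line observation that an $\alpha$-wITRM with $\alpha>\omega$ can simulate an ($\omega$-)ITRM; you have merely spelled out the simulation (the register holding $\omega$, the correction of overflowed registers, and the liminf bookkeeping) that the paper leaves implicit and attributes to \cite{K1}.
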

\begin{proof}
It is easy to simulate ITRMs on an $\alpha$-wITRM when $\alpha>\omega$.
\end{proof}

We also note the following humble simulation result:

\begin{prop}{\label{beyond omega}}
For any $1<k\in\omega$ and any ordinal $\alpha$, we have $\beta(\alpha+1)=\beta(\alpha\cdot k)$.
\end{prop}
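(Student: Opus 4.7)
The plan is to prove the equality by mutual simulation between $(\alpha+1)$-ITRMs and $\alpha \cdot k$-ITRMs, assuming $\alpha \geq 1$ since the case $\alpha = 0$ is trivial. For the inequality $\beta(\alpha+1) \leq \beta(\alpha \cdot k)$, I would first observe that $\alpha + 1$ is a successor ordinal, so the liminf of any sequence of ordinals $\leq \alpha$ remains $\leq \alpha$; consequently, an $(\alpha+1)$-ITRM never triggers a reset-to-zero via overflow, and every register content during a computation of an $(\alpha+1)$-ITRM program $P$ with parameter $\zeta \leq \alpha$ stays $\leq \alpha$. Since $\alpha < \alpha \cdot k$ for $\alpha \geq 1$ and $k \geq 2$, running $P$ verbatim on an $\alpha \cdot k$-ITRM (with the same parameter) produces identical configurations at identical times, and the halting time is preserved. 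Hence every $(\alpha+1)$-ITRM-clockable ordinal is also $\alpha \cdot k$-ITRM-clockable.

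The substantive direction is $\beta(\alpha \cdot k) \leq \beta(\alpha + 1)$. Given an $\alpha \cdot k$-ITRM program $P$ with parameter $\zeta$ clocking $\gamma$, the plan is to construct an $(\alpha+1)$-ITRM program $P'$ clocking some $\gamma' \geq \gamma$. The representation I intend to use assigns to each $\alpha \cdot k$-register of $P$, whose current value has the unique decomposition $v = \alpha \cdot i + \delta$ with $i < k$ and $\delta < \alpha$, an ``index'' register $R_{\iota}$ storing $i$ together with $k$ ``slot'' registers $R^{(0)}, \ldots, R^{(k-1)}$ in which $R^{(i)} = \delta$ while every other slot carries the flag value $\alpha$, which is a legitimate content for an $(\alpha+1)$-register. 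Each $\alpha \cdot k$-ITRM instruction (increment, decrement, conditional jump, zeroing) translates into a short subroutine acting on these auxiliary registers; the parameter $\zeta = \alpha \cdot i_{0} + \delta_{0}$ is encoded through a finite constant $i_{0}$ baked into the program and $\delta_{0}$ as a parameter of $P'$.

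The principal obstacle is faithfully reproducing the liminf rule. For a sequence $v_{\xi} = \alpha \cdot i_{\xi} + \delta_{\xi}$, writing $i^{*} := \liminf_{\xi} i_{\xi}$ and $\delta^{*} := \liminf \{\delta_{\xi} : i_{\xi} = i^{*}\}$, one has $\liminf_{\xi} v_{\xi} = \alpha \cdot i^{*} + \delta^{*}$, with the convention that $\delta^{*} = \alpha$ forces a carry to $\alpha \cdot (i^{*}+1)$ and, when $i^{*}+1 = k$, further triggers the overflow reset to $0$. The component-wise liminf of the simulation automatically yields $R_{\iota} = i^{*}$ (since the $i_{\xi}$ range in a finite set, so liminf equals eventual minimum), $R^{(j)} = \alpha$ for $j < i^{*}$, and $R^{(i^{*})} = \delta^{*}$; but the slots $R^{(j)}$ with $j > i^{*}$ may inherit spurious values $\leq \alpha$. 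My plan is to insert a short canonicalization subroutine at the beginning of the simulation of every $P$-step, which reads $R_{\iota}$, inspects $R^{(i^{*})}$ to distinguish the normal case from the carry case $R^{(i^{*})} = \alpha$, performs the carry when needed (or the full overflow reset when $i^{*}+1 = k$), and sets every slot $R^{(j)}$ above the currently active index to $\alpha$. Since $k$ is a fixed natural number, the canonicalization together with the simulation of each $P$-instruction requires only a uniformly bounded finite number of $(\alpha+1)$-ITRM steps, and the only slot whose value changes within a given simulated step is the active one, so intermediate values do not distort the liminfs at limits of $P'$. Consequently, $P'$ halts after $m \cdot \gamma$ steps for some finite $m$, which equals $\gamma$ once $\gamma$ is infinite and is in any case $\geq \gamma$. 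Thus every $\alpha \cdot k$-ITRM-clockable ordinal is $(\alpha+1)$-ITRM-clockable, and combining this with the first direction yields $\beta(\alpha+1) = \beta(\alpha \cdot k)$.
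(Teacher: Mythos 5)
Your proof is correct and follows essentially the same route as the paper: the paper likewise represents a content $\alpha\cdot i+\rho$ of an $(\alpha\cdot k)$-register by $k$ registers bounded by $\alpha$, using $\alpha$ as a flag value in the inactive slots, and treats the inclusion $\beta(\alpha+1)\leq\beta(\alpha\cdot k)$ as immediate. Your write-up is in fact considerably more detailed than the paper's two-sentence argument, in particular in verifying that the component-wise liminf rule of the simulating $(\alpha+1)$-machine reproduces the liminf behaviour (including the carry and overflow-reset cases) of the simulated $(\alpha\cdot k)$-machine.
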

\begin{proof}
It is clear that we have $\beta(\alpha+1)\leq\beta(\alpha\cdot k)$
	
On the other hand, an $(\alpha\cdot k)$-ITRM-program $P$ can be simulated on an $(\alpha+1)$-ITRM in the following way: Replace any register used by $P$ with $k$ registers that can contain ordinals up to $\alpha$. Then represent $(\alpha\cdot i)+\rho$, $i<k$, $\rho<\alpha$ by having $\alpha$ in the first $i$ many of these registers and $\rho$ in the $(i+1)$st.
\end{proof}

Finally, we observe that the lost melody theorem holds for $\alpha$-ITRMs whenever $\alpha$ is exponentially closed. Let us say that $x\subseteq\alpha$ is $\alpha$-ITRM-recognizable when there are an $\alpha$-ITRM-program $P$ and an ordinal $\zeta<\alpha$ such that, for any $y\subseteq\alpha$, we have $P^{y}(\zeta)\downarrow=1$ if and only if $y=x$ and otherwise $P^{y}(\zeta)\downarrow=0$.\footnote{The term `recognizable' was first used by Hamkins and Lewis in \cite{HL} in the context of ITTMs.} Following the terminology of \cite{HL} (where it is shown that there are lost melodies for ITTMs), a subset of $\alpha$ which is $\alpha$-ITRM-recognizable, but not $\alpha$-ITRM-computable is called an $\alpha$-ITRM lost melody, below simply called `lost melody' for short. In the below proof, we will occasionally confuse a program $P$ with its index $i$.

\begin{thm}{\label{alpha  ITRM lost melody}}
For any ITRM-singular and exponentially closed ordinal $\alpha$, there is a lost melody. 
\end{thm}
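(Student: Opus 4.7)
The plan is to exhibit a canonical $\alpha$-code for $L_{\beta(\alpha)}$ as the lost melody. Concretely, using a $\Delta_{1}$-parametrized enumeration of $L_{\beta(\alpha)}$ together with the $\alpha$-ITRM-computable $\alpha$-code for $\beta(\alpha)$ supplied by the proof of Theorem \ref{clock implies write}, fix some $x\subseteq\alpha$ that canonically encodes the $\in$-diagram of $L_{\beta(\alpha)}$, with each ordinal $\iota<\beta(\alpha)$ represented by a designated coordinate in $\alpha$.

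That $x$ is not $\alpha$-ITRM-computable is immediate from Theorem \ref{strength and L}: every $\alpha$-ITRM-computable subset of $\alpha$ lies in $L_{\beta(\alpha)}$, while $x$, encoding a surjective enumeration of $L_{\beta(\alpha)}$ together with its full $\in$-diagram, cannot itself lie in $L_{\beta(\alpha)}$ on pain of violating Tarski undefinability of truth at that level.

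Recognizability proceeds in three stages. First, given a candidate $y\subseteq\alpha$, use the depth-first search of Lemma \ref{koepke seyfferth} and Corollary \ref{wo true} to certify that the $\in$-structure coded by $y$ is well-founded and transitive. Second, verify by the bounded truth predicate algorithm that this structure satisfies $V=L$ and extract its ordinal height $\gamma$, so that the coded structure is identified as $L_{\gamma}$; compare $y$ coordinate-by-coordinate with the canonical code extracted from $L_{\gamma}$ so that the whole of $y$ is pinned down by $\gamma$. Third, and most delicately, verify $\gamma=\beta(\alpha)$. For $\gamma\geq\beta(\alpha)$, use $y$ to read off, for each $\alpha$-ITRM-program $P$ and each parameter $\zeta<\alpha$, the halting (or looping) time that $L_{\gamma}$ attributes to $P(\zeta)$, clock the corresponding ordinal via Theorem \ref{clock implies write}, and run $P(\zeta)$ to that clocked time on the actual $\alpha$-ITRM, comparing outcomes; if all $L_{\gamma}$-claims match the real behaviour then every halting time is bounded by $\gamma$, forcing $\gamma\geq\beta(\alpha)$. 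For $\gamma\leq\beta(\alpha)$, enumerate those halting times inside $L_{\gamma}$ and check that they are cofinal in $\gamma$; since the halting times are bounded by $\beta(\alpha)$ and cofinal in $\gamma$, we obtain $\gamma\leq\beta(\alpha)$.

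The main obstacle is making the third stage uniform and effective: the procedure must halt on \emph{every} $y$, not only on the intended $x$. For this, I would exploit the additive indecomposability of $\beta(\alpha)$ together with the clocking and safety bounds of Theorem \ref{clock implies write} and Lemma \ref{alpha induction} to ensure that the auxiliary simulations of $\alpha$-ITRM programs used during verification terminate within a time bound supplied by $y$ when $y$ is well-behaved, and produce an observable discrepancy otherwise. Care is also needed with the verification that $y$ faithfully realizes the canonical code of whichever $L_\gamma$ it claims to encode, so that isomorphic but non-identical codes are rejected; this is handled by insisting on the canonical assignment of ordinals to elements that was used to construct $x$ in the first place.
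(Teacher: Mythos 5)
Your choice of lost melody is genuinely different from the paper's: you take a canonical $\alpha$-code for $L_{\beta(\alpha)}$, whereas the paper uses the halting set $H=\{p(i,\zeta):P_{i}(\zeta)\downarrow\}$. There are two concrete problems. First, your construction of the candidate $x$ rests on ``the $\alpha$-ITRM-computable $\alpha$-code for $\beta(\alpha)$ supplied by the proof of Theorem \ref{clock implies write}'', but no such code exists: $\beta(\alpha)$ is the supremum of the clockable ordinals and is itself neither clockable (otherwise $\beta(\alpha)+1$ would be clockable, contradicting the definition of the supremum) nor computable (a computable code for $\beta(\alpha)$ would let the well-foundedness search clock an ordinal $\geq\beta(\alpha)$ --- exactly the point made at the end of the proof of Theorem \ref{clock implies write}, which shows the two suprema coincide). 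The set $x$ can still be defined non-effectively, say from the $<_{L}$-least surjection $\alpha\rightarrow L_{\beta(\alpha)}$, but any such surjection first appears strictly above level $\beta(\alpha)$, and this is where the second, more serious gap opens.

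Recognizability requires your program to accept $x$ and reject every other $y\subseteq\alpha$, in particular every \emph{other} $\alpha$-code of the very same structure $L_{\beta(\alpha)}$ (and there are many, obtained by permuting the enumeration). Your stage two proposes to ``compare $y$ coordinate-by-coordinate with the canonical code extracted from $L_{\gamma}$'', but the canonical code is not an element of $L_{\gamma}=L_{\beta(\alpha)}$ and hence cannot be read off from the coded structure itself. To close this you would have to show that the canonical code is $\alpha$-ITRM-computable \emph{relative to an arbitrary} code of $L_{\beta(\alpha)}$, e.g.\ by first computing from $y$ a code for some $L_{\beta(\alpha)+k}$ and extracting the $<_{L}$-least surjection there; this is plausible but is precisely the step you gesture at rather than supply. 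The paper sidesteps the issue entirely by recognizing $H$: a concrete subset of $\alpha$ with no isomorphism ambiguity, whose non-computability is the trivial diagonal argument, and whose recognizer only needs to clock (relative to the candidate oracle) an ordinal $\gamma$ such that $L_{\gamma}$ settles halting-versus-strong-looping for all $P(\zeta)$, then read $H$ off a computed code for $L_{\gamma}$ and compare. Your stage three is essentially this same test and is sound in outline (indeed, running $P(\zeta)$ ``to the clocked time and comparing outcomes'' is unnecessary, since halting computations and strong loops found inside $L_{\gamma}$ are automatically correct by absoluteness); the missing idea lies entirely in the canonicity and comparison step.
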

\begin{proof}
Given $\alpha$, let $H:=\{p(i,\zeta)\in\omega\times\alpha:P_{i}(\zeta)\downarrow\}$ be the halting set for $\alpha$-ITRMs. It is clear that $H$ is not $\alpha$-ITRM-computable. 

We claim that $H$ is $\alpha$-ITRM-recognizable. We sketch the proof, which is a generalization of the one in \cite{C2} showing that the halting set for ITRMs is ITRM-recognizable and thus a lost melody for ITRMs. We freely use the relativized versions of the previous results in this section: In particular, if $\gamma$ is $\alpha$-ITRM-clockable in the oracle $x\subseteq\alpha$, then $\gamma$ is $\alpha$-ITRM-computable in this oracle etc.

Note that it is easy to effectively assign to any $\alpha$-ITRM-program $P$ and any parameter $\zeta$ another $\alpha$-ITRM-program $Q^{P,\zeta}$ such that $Q$ halts if and only if $P(\iota,\zeta)$ halts with output $0$ or $1$ for any $\iota<\alpha$.
 Moreover, we can effectively assign to all $\alpha$-ITRM-programs $P$ and all ordinals $\iota,\zeta<\alpha$ a program $R^{P,\iota,\zeta}$ that halts if and only if $P(\iota)\downarrow=\zeta$. 

Now, given $x\subseteq\alpha$ in the oracle, we do the following: Run through $\alpha$ and, for each $p(i,\zeta)<\alpha$, check whether $p(Q^{P_{i},\zeta},\zeta)\in x$. If not, continue. Otherwise, perform a well-foundedness check on the set $\{\iota<\alpha:\{p(R^{P_{i},(\iota,\zeta),1},p(\iota,\zeta))\in x\}$. Doing this for any $(i,\zeta)\in\omega\times\alpha$ will clock some ordinal $\gamma$ in the oracle $x$; thus, $\gamma$ is $\alpha$-ITRM-computable. From a code $c$ for $\gamma$, we can then compute a code $d$ for $L_{\gamma}$. Evaluating truth in $L_{\gamma}$, we can then check whether it holds in $L_{\gamma}$ that, for any $\alpha$-ITRM-program $P$ and any $\zeta<\alpha$, $P(\zeta)$ either halts or runs into a strong loop. If this fails, then $x\neq H$. If this holds, $H$ is definable over $L_{\gamma}$ and thus $\alpha$-ITRM-computable from $d$, and we can use this to compute $H$ and compare it to $x$.
\end{proof}


\section{Conclusion and Further Work}

The above work settles the question of the computational strength of $\alpha$-ITRMs when $L_{\alpha}\models$ZF$^{-}$; in the other cases, the question for the $\alpha$-ITRM-computable subsets of an exponentially closed ordinal $\alpha$ is reduced to the determination of $\beta(\alpha)$, which is characterized as (i) the supremum of the $\alpha$-ITRM-clockable ordinals (ii) the $\alpha$-ITRM-computable ordinals and (iii) the supremum of the looping times for non-halting $\alpha$-ITRM-programs. Although we obtained some lower and upper bounds, these are still quite far apart, and we expect that considerably new ideas are needed to determine $\beta(\alpha)$ precisely for any $\alpha$ which neither has $L_{\alpha}\models$ZF$^{-}$ nor $\alpha=\omega$. 


Similarly open is the analogous question for $\alpha$-wITRMs: Here, we are even missing a characterization of the $u$-weak ordinals. 

We mention the following specific questions:

\begin{question}
	Is $\beta(\alpha)\leq\alpha^{+\omega}$ for all exponentially closed $\alpha$? Even more boldly, is $\beta(\alpha)=\alpha^{+\omega}$ for such $\alpha$ unless $L_{\alpha}\models$ZF$^{-}$?
\end{question}

\begin{question}
An important feature of ITRMs is the solvability of the bounded halting problem, see Koepke and Miller \cite{KM}: For any fixed number $k\in\omega$, the halting problem for ITRM-programs using $k$ registers is solvable by an ITRM-program (which, of course, will use more than $k$ registers). The proof in \cite{KM} seems to depend on the fact that, for any possible register content $j$ of an ITRM, only finitely many configurations have all register contents $\leq j$, which clearly fails for $\alpha$-ITRMs as soon as $\alpha>\omega$. Hence, we ask: Does the solvability of the bounded halting problem work for any exponentially closed $\alpha>\omega$ other than the ZF$^{-}$-ordinals?\footnote{For the ZF$^{-}$-ordinals, this is clearly true as we can clock the upper bound $\alpha^{n+1}$ of the halting times of programs using $\leq n$ registers, so it can be decided whether such a program halts by simply running it for that many steps and seeing whether it holds until then.}
\end{question}

\section{Acknowledgements}

We thank Philipp Schlicht for a series of discussion in which the proof of a previous (weaker) version Lemma $5$, was obtained and his kind permission to use this proof in our work.

\end{document}